\newcommand*{\qed}{\hfill\ensuremath{}}%
\newtheorem{prop}[theorem]{Proposition}
\newtheorem{dftn}[theorem]{Definition}
\newtheorem{aspt}[theorem]{Assumption}
\newtheorem{thm}{Theorem}
\newtheorem{lem}[thm]{Lemma}
\newcommand{\R}{\mathbb{R}}
\newcommand{\Blue}[1]{{\color{blue}#1}}
\newcommand{\Red}[1]{{\color{red}#1}}
\newcommand{\Magenta}[1]{{\color{magenta}#1}}   
\title{TRPL+K: Thick-Restart Preconditioned Lanczos+K Method for Large Symmetric Eigenvalue Problems}
\author{Lingfei Wu\footnotemark[1],
   Fei Xue\footnotemark[2],
   Andreas Stathopoulos\footnotemark[1]}
\begin{document}
\maketitle
\renewcommand{\thefootnote}{\fnsymbol{footnote}}
\footnotetext[1]{Department of Computer Science, College of William and Mary, Williamsburg, Virginia 23187-8795, 
U.S.A. (lwu@email.wm.edu, andreas@cs.wm.edu)} 
\footnotetext[2]{Department of Mathematical Sciences, Clemson University, Clemson, SC 29634, 
U.S.A. (fxue@clemson.edu)} 
\renewcommand{\thefootnote}{\arabic{footnote}}

\begin{abstract}
The Lanczos method is one of the standard approaches for computing a few eigenpairs of a large, sparse, symmetric matrix. It is typically used with restarting to avoid unbounded growth of memory and computational requirements. Thick-restart Lanczos is a popular restarted variant because of its simplicity and numerically robustness. However, convergence can be slow for highly clustered eigenvalues so more effective restarting techniques and the use of preconditioning is needed. 
In this paper, we present a thick-restart preconditioned Lanczos method, TRPL+K, that combines the power of locally optimal restarting (+K) and preconditioning techniques with the efficiency of the thick-restart Lanczos method. TRPL+K employs an inner-outer scheme where the inner loop applies Lanczos on a preconditioned operator while the outer loop augments the resulting Lanczos subspace with certain vectors from the previous restart cycle before it restarts. We first identify the differences from various relevant methods in the literature. Then, based on an optimization perspective, we show an asymptotic global quasi-optimality of a simplified TRPL+K method compared to an unrestarted global optimal method. The theory applies also to LOBPCG as a special case. Finally, we present extensive experiments showing that TRPL+K either outperforms or matches other state-of-the-art eigenmethods in both matrix-vector multiplications and computational time.
\end{abstract}

%%%%%%%%%%%%%%%%%%%%%%%%%%%%%%%%%%%%%%%%%%%%%%%%%%%%%%%%%%%%%%%%%%%
\section{Introduction}
The numerical solution of large sparse symmetric eigenvalue problems is one of the most computationally intensive tasks in applications ranging from structural engineering, quantum chromodynamics, material science, dynamical systems, machine learning and data mining, to numerical linear algebra \cite{Golub1996MC, saad1992numerical, van2002computational, wu2016estimating, wu2016towards, wu2015preconditioned,wu2016primme_svds, hsieh2014nuclear, Han2011Data}. Depending on the application, one may be interested in computing a few of the smallest or largest eigenpairs, or some eigenpairs in the interior of the spectrum. The challenge is that the size of the eigenproblems in these applications is routinely $O(10^7-10^9)$ \cite{olsen1990passing}. Due to memory and computational constraints, iterative methods that rely on sparse matrix-vector products are the only practical solutions.

We are interested in finding the smallest eigenvalues and associated eigenvectors of the pencil $(A,B)$ when $A, B \in \R^{n \times n}$ are large, sparse symmetric matrices,
\begin{equation}
A v_i = \lambda_i B v_i, \quad \quad i = 1, \ldots, p, \ \ p \ll n ,
\end{equation} 
where $B$ is positive definite, $[v_{1},\ldots , v_{p}] \in \R^{n \times p}$ is a $B$-orthonormal set of eigenvectors and  $\lambda_{1}, \ldots, \lambda_{p}$ are the corresponding eigenvalues of $(A,B)$.
For simplicity, we first discuss our method in the context of the standard eigenvalue problem (where $B$ is the identity matrix) but we extend it to the generalized problem later.

Variants of Krylov subspace methods have long been used to address large-scale eigenvalue problems \cite{lanczos1950iteration,paige1972computational,paige1976error,paige1980accuracy,kalantzis2016spectral}. The unrestarted versions of Lanczos and Arnoldi are considered optimal methods because they obtain their solutions over the entire set of matrix polynomials with degree up to the number of iterations. Yet, for difficult problems they require too many iterations to converge, resulting in impractical memory and computational demands.
Researchers have sought to alleviate these problems with restarting and preconditioning.

Restarting interrupts the iteration, computes the current approximations and uses them to start a new iteration. The pioneering implicitly restarted Arnoldi and Lanczos (IRL) methods perform this in a way so that the restarted vectors continue to form a Krylov subspace \cite{sorensen1992implicit,calvetti1994implicitly}.
The thick-restart Lanczos (TRLan) method \cite{morgan1996restarting,wu2000thick} is equivalent to IRL but simpler to use and numerically robust. 
However, when the desired eigenvalues are poorly separated from the rest of the spectrum, restarting causes further deterioration of convergence that thick-restarting cannot fully recover. Locally optimal restarting is a technique that can result in near optimal convergence when combined with thick-restarting or block methods \cite{knyazev2001toward,stathopoulos1998restarting,stathopoulos2007nearlyI}. The technique is also known as +K restarting which is more descriptive of the number of locally optimal restarting directions we keep. 
The resulting methods, however, do not form Krylov spaces and cannot use efficient constructing strategies such as the Lanczos three-term recurrence.

Near optimal restarting techniques alone cannot address the slow convergence caused by poorly separated eigenvalues, a fact that often appears in real applications \cite{yang2005solving,vomel2008state}. Shift-invert transformations require matrix inversions and are typically expensive, which we do not consider in this paper. Instead we focus on methods that use \emph{inexact} inverses, or \emph{preconditioning}, to accelerate convergence. 
The Davidson and its extension the Generalized Davidson (GD) method are prototypical preconditioned methods \cite{davidson1975iterative,morgan1986generalizations,crouzeix1994davidson,stathopoulos1994davidson}. 
At every step they apply the preconditioner to the current eigenvalue residual to extend the search space in a similar fashion to Arnoldi but producing a non-Krylov space. The Jacobi-Davidson method (JD) \cite{sleijpen2000jacobi} is a special case of GD where the preconditioner is performed with an appropriately projected preconditioned linear system solver. GD can also use thick and locally optimal restarting, a method called GD+K \cite{stathopoulos1998restarting}.
Note that this type of eigenvalue preconditioning can be considered as a step of an optimization method \cite{EdelmanAriasSmith}. Such a view is followed by the Locally Optimal Block Preconditioned Conjugate Gradient method (LOBPCG) \cite{knyazev2001toward} which forgoes the subspace acceleration of GD+K for a block three term recurrence. 
The search spaces of the above methods are not Krylov, which results in two disadvantages: expensive iteration costs (Rayleigh-Ritz projection at each inner step) and selective convergence to a particular eigenpair. 
The Preconditioned Lanczos (PL) \cite{morgan1993preconditioning} and the inverse free preconditioned Krylov subspace method (EigIFP) \cite{golub2002inverse} build a Krylov space of the preconditioned matrix and thus avoid the expensive iteration costs. Although the preconditioned matrix has different eigenvectors than $(A,B)$, the methods invoke the Rayleigh-Ritz projection of the original matrices onto a preconditioned search subspace, and can converge to one eigenpair at a time.

In this paper, we propose a Thick-Restart Preconditioned Lanczos+K method (TRPL+K) to address the aforementioned problems. TRPL+K includes all three major building blocks: thick-restarting, locally optimal restarting, and preconditioning. Unlike GD+K, however, it employs a Krylov inner iteration based on TRLan to build the search space, thus avoiding the expensive Rayleigh-Ritz procedure at every step and requiring about half the memory.
It also differs from JD, since it stores the entire Krylov space. Alternatively, TRPL+K can be considered as an extension to EigIFP (or PL) with thick and locally optimal restarting, thus offering significant speedups.
We provide a convergence analysis of a simplified version of TRPL+K from the perspective of optimization, showing an asymptotic global quasi-optimality of the method compared to an ideal unrestarted global optimal method. This complements some limited earlier theoretical results on the convergence of the +K technique in LOBPCG and GD+K \cite{knyazev2001toward,stathopoulos1998restarting}.
Extensive experiments demonstrate that TRPL+K either outperforms or matches other state-of-the-art eigenmethods in both matrix-vector multiplication counts and computational time. 

In Section \ref{sec:related works} we develop a background framework on which to compare existing and proposed methods. In Section \ref{sec:trpl+k for standard eigenvalue problem}, we present our method for the standard eigenproblem. Section \ref{sec:convergence analysis} considers a simplified version of the algorithm for the generalized eigenproblem and develops a convergence analysis. Section \ref{sec:numerical experiments} compares the efficiency and effectiveness of TRPL+K with other methods through experiments.

%%%%%%%%%%%%%%%%%%%%%%%%%%%%%%%%%%%%%%%%%%%%%%%%%%%%%%%%%%%%%%%%%%%
\section{Background and Related Work}
\label{sec:related works}
Since restarting techniques are the primary concern of this paper, we focus our background section on a common framework in which we can describe most current eigenmethods as well as the proposed one.

\subsection{Thick Restarting, Locally Optimal Restarting, and Preconditioning}
 The unrestarted Lanzcos method converges optimally in terms of the number of matrix-vector multiplications because it dynamically builds the optimal polynomial through an efficient three-term recurrence. In practice, rounding errors cause loss of orthogonality to previous Lanczos vectors, so we typically store all the Lanczos vectors and perform selective or partial re-orthogonalization \cite{parlett1980symmetric,Golub1996MC}. Restarting is intended to reduce the storage requirements and computational cost of orthogonalization. After a maximum number of iteration vectors are stored, we compute the best desired approximations and restart. While limiting storage and computational costs per iteration, restarting inevitably impairs the optimality of unrestarted Lanczos since it discards part of the information. Various techniques \cite{murray1992improved,stathopoulos1998restarting,stathopoulos2007nearlyI,knyazev2001toward,liu2013limited} attempt to partially recover the lost information due to restarting. 
 
 Implicit restarting \cite{sorensen1992implicit} performs this by dumping unwanted components (typically unwanted Ritz vectors) by applying the implicitly shifted QR. However, this technique is complicated to implement in a stable way \cite{hernandez2006lanczos}.
Thick restarting is mathematically equivalent to implicit restarting \cite{stathopoulos1998dynamic,morgan1996restarting} yet it is easier to implement in a stable way in the Lanczos \cite{wu2000thick}, Arnoldi \cite{morgan1996restarting}, and GD methods \cite{stathopoulos1998dynamic}. Thick restarting directly keeps the wanted Ritz vectors instead of dumping the unwanted ones from the basis. Moreover, with thick restarting it is straight-forward to add arbitrary (non-Krylov) vectors to the restarted space. This is a key feature for our proposed method where we augment the restarted space with Ritz vectors from a previous cycle. Due to simplicity, numerically stability, and flexibility, thick restarting has been applied to various Krylov and GD (or JD) type methods for both eigenvalue and SVD problems \cite{wu2000thick,morgan1996restarting,EV_CY_FX_16,li2016thick,sleijpen2000jacobi,stathopoulos1998dynamic,stathopoulos2007nearlyI,baglama2005augmented,wu2015preconditioned}. 

The locally optimal restarting technique has been studied under different names in the literature such as locally optimal recurrence in LOBPCG \cite{knyazev2001toward}, +K restarting in GD+K \cite{stathopoulos1998restarting,stathopoulos2007nearlyI}, and Krylov subspace optimization \cite{liu2013limited}. There are different ways to justify the use of this technique. One is from an optimization viewpoint that extends the non-linear Conjugate Gradient (CG) method for optimizing the Rayleigh quotient by performing a Rayleigh Ritz on the three successive iterates. Another viewpoint is that three successive Lanczos iterates are sufficient to guarantee full orthogonality of the space. Yet another viewpoint relates the Lanczos iterates to the ones from a three term recurrence of the linear CG on the Jacobi-Davidson correction equation. Regardless of the viewpoint, the idea of using Ritz vectors from both the current and the previous iteration has given rise to methods that converge near-optimally for seeking one eigenpair under limited memory, especially when combined with block methods or thick restarting \cite{knyazev2001toward,stathopoulos2007nearlyI}. Rigorous analysis, however, has been difficult. In \cite{stathopoulos1998restarting} the last viewpoint was analyzed providing some bounds on how well the locally optimal restarting matches the effects of a global optimization over the unrestarted space. 
In this paper, we provide an optimization viewpoint analysis that establishes the asymptotic global quasi-optimality of our new method by quantifying the relative difference between the locally and a globally optimal Rayleigh quotients.

%Restarting techniques try to retrieve some of the optimality of the unrestarted method. 
Preconditioning (inexact shift-invert) is crucial to improve the convergence of all these methods. For large problems, as exact matrix factorizations are prohibitive or infeasible, we focus on preconditioners such as incomplete ILU or LDL factorization \cite{golub2002inverse}. Although GD (or JD) type methods use a preconditioner to build a general, non-Krylov subspace, a few methods have been proposed to exploit a preconditioned Krylov subspace  \cite{golub2002inverse,morgan1993preconditioning,EV_CY_FX_16}. This paper further explores this line of research.

\subsection{Comparison of Subspaces of Various Methods}
We use the following notation. 
A cycle refers to all the work a method performs between restarts and is denoted by a subscript. 
When present, a second subscript refers to the eigenvector index, e.g., $x_{i,2}$ is the approximate eigenvector for the second smallest eigenvalue at the end of cycle $i$. A matrix or block of vectors followed by parenthesis uses MATLAB index notation.
At restart, thick restarting keeps at least the $p$ wanted Ritz vectors. A method then continues building a basis $U$ with $m$ new vectors, which differentiates most methods. In +K restarting, $U$ is then augmented by $l$ Ritz vectors from the previous step (which previous step depends on the method). Thus, at the end of the $i$-th outer cycle, the basis is $U \in \R^{n \times q}$, where $q = p+m+l$ is the maximum basis size. For a given shift $\rho$, let $A_\rho = (A - \rho I)$, $\widehat{A}_\rho = L^{-1} (A - \rho I) L^{-T}$, and a preconditioner $M = L^{-T} L^{-1} \approx (A - \rho I)^{-1}$. 
The shift is usually the Rayleigh quotient for some approximate eigenvector $x$,
$\rho(x) = \frac{x^T A x}{x^T x}$.
We use the Rayleigh-Ritz (RR) procedure to extract approximate eigenpairs from $\mathrm{span}(U)$. $\mathcal{K}_{m}(A,u_1)$ refers to the Krylov subspace of dimension $m$ of $A$ with initial vector $u_1$.

Thick-restart Lanczos \cite{wu2000thick} is mathematically equivalent to implicit restarted Lanczos. At the end of the $i+1$ cycle, it forms a space $K_{TRL}$ which includes the $p$ wanted Ritz vectors obtained by RR at the end of the $i$-th cycle, and $m$ additional Lanczos vectors starting from the last Lanczos vector $r = u_{i,p+m+1}$ of the $i$-th cycle. 
\begin{equation}\label{eq:subspace_trlan}
    K_{TRL} = \text{span} \{\ \underbrace{x_{i,1}, x_{i,2}, \ldots, x_{i,p},}_{\text{Wanted Ritz vectors}} \ \underbrace{r, Ar, A^2r, \ldots, A^{m-1} r}_{\text{Lanczos iterations}} \ \}.
\end{equation}
$K_{TRL}$ always remains a Krylov space, but it allows for a more efficient implementation than the implicitly restarted Lanczos. Thick-restart Lanczos is the only method we consider that cannot use preconditioning directly, but only through shift-invert.

LOBPCG \cite{knyazev2001toward} forms a subspace $K_L$ at the end of cycle $i+1$ from which it will use RR to compute $p$ approximate eigenpairs. The subspace is built by the following locally optimal recurrence with $m=p=l$,
\begin{equation}\label{eq:subspace_lobpcg}
    K_L = \text{span} \{\ \underbrace{x_{i,1}, x_{i,2}, \ldots, x_{i,p},}_{\text{Wanted Ritz vectors}} \ \underbrace{Mr_1, Mr_2, \ldots, Mr_p,}_{\text{Preconditioned residual vectors}} \ \underbrace{x_{i-1,1}, \ldots, x_{i-1,p}}_{\text{Previous Ritz vectors}}\ \}
\end{equation}
where $x_{i,j}$ is the $j$-th Ritz vector from cycle $i$, $\{r_j\}$ is its residual, and $x_{i-1,j}$ is the corresponding Ritz vector from cycle $i-1$ 
\footnote{One could use search directions instead of previous 
Ritz vectors for better numerical stability as suggested in \cite{knyazev2001toward,knyazev2007block}, but both variants essentially construct the same subspace.}. 
Note that $\{M r_j\}$ are typically computed in a block form, not one at a time through an inner iteration.

GD+K \cite{stathopoulos1998restarting,stathopoulos2007nearlyI} is an extension of GD where, at restart, $l$ number of previous Ritz vectors are used to augment the thick restarted basis. Thus at the end of cycle $i+1$ the subspace of size $q$ is
\begin{equation}\label{eq:subspace_gd+k}
    K_G = \text{span} \{\ \underbrace{x_{i,1}, \ldots, x_{i,p},}_{\text{Wanted Ritz vectors}} \ \underbrace{Mr_1^{(p)}, \ldots, Mr_1^{(p+m-1)},}_{\text{Preconditioned residual vectors}} \ \underbrace{x_{i,1}^{(q-1)}, \ldots, x_{i,l}^{(q-1)}}_{\text{Previous Ritz vectors}}\ \}. 
\end{equation}
Here, $x_{i,j}$ and $x_{i,j}^{(q-1)}$ are the Ritz vectors computed at cycle $i$ from the spaces $K_G$ with $size(K_G)=q$ and $size(K_G)=q-1$, respectively. 
$r^{(j)}$ denotes the residual vector of the targeted Ritz vector $x_{i+1,1}^{(j)}$
at inner iteration $j-p+1$ of the current cycle $i+1$, i.e., when $size(K_G)=j$.
Without preconditioning ($M=I$) and with $l=0$, GD+K is equivalent to thick-restart Lanczos. 
%When $l>0$, $K_G$ is not a Krylov space. 
When $m=p=l=1$, GD+K is equivalent to the locally optimal preconditioned conjugate gradient (LOPCG, or LOBPCG with $p=1$).
A block version of GD+K is also possible. 

Preconditioned Lanczos (PL) \cite{morgan1993preconditioning} employs a preconditioned Krylov inner iteration on $\widehat{A}_\rho=L^{-1}(A-\rho I)L^{-T}$ to build a basis $G$ of $K_P$ in (\ref{eq:subspace_pl}), applies the RR of $\widehat{A}_\rho$ onto $G$ to find a primitive Ritz pair $(\theta,y)$, and converts the Ritz pair back $(\rho+\theta,L^{-T}Gy)$ for the original eigenvalue problem. Additional eigenpairs are found one at a time. The size $m$ of the Krylov space varies dynamically.
\begin{equation}\label{eq:subspace_pl}
     K_P = \text{span} \{ \underbrace{L^{T}x_{i,1},}_{\text{Wanted Ritz vector}} \underbrace{\widehat{A}_\rho(L^{T}x_{i,1}), \widehat{A}_\rho^2(L^{T}x_{i,1}), \ldots, \widehat{A}_\rho^{m}(L^{T}x_{i,1})}_{\text{Lanczos iterations}} \ \}.
\end{equation}
Without preconditioning PL is an explicitly restarted Lanczos with one vector (i.e., no thick restarting). 

The inverse free preconditioned Krylov subspace method (EigIFP) \cite{golub2002inverse} produces the same approximations as PL in exact arithmetic, but the application of the preconditioner does not have to be in factorized form. The basis $V$ of the search space $K_F$ built by EigIFP is related to the PL's basis as $U = L^{-T} G$. 
RR is performed by projecting $A_\rho = A-\rho I$ onto $U$, yielding the approximate eigenpair $(\rho+\theta,U y)$. Since all vectors are stored, $m = q-1.$ Otherwise, EigIFP has the same limitations as PL. 
\begin{equation}\label{eq:subspace_eigifp}
    K_F = \text{span} \{ \underbrace{x_{i,1},}_{\text{Wanted Ritz vector}} \underbrace{MA_\rho x_{i,1}, (MA_\rho )^2x_{i,1}, \ldots, (MA_\rho )^m x_{i,1}}_{\text{Lanczos iterations}} \ \}.
\end{equation}

The GD+K method typically demonstrates faster convergence than the rest of the methods in terms of number of matrix-vector products because it combines both thick and locally optimal restarting and uses subspace acceleration to obtain the ``best'' Ritz vector at every inner iteration to improve by preconditioning. However, the faster convergence of GD+K is at the cost of applying more frequent Rayleigh-Ritz (RR) procedure, which could be a quite expensive operation when the subspace is large \cite{vecharynski2015projected}. Therefore, it is unclear if the GD+K method is still the method of the choice in terms of the runtime when the matrix-vector operation is inexpensive. 
For large numbers of well separated eigenvalues, however, a block method such as LOBPCG, could also be competitive.
On the other hand, PL and EigIFP can generate the inner Krylov space without the overhead of multiple RR projections required by GD+K.

%%%%%%%%%%%%%%%%%%%%%%%%%%%%%%%%%%%%%%%%%%%%%%%%%%%%%%%%%%%%%%%%%%%
\section{Thick-Restart Preconditioned Lanczos +K Method}
\label{sec:trpl+k for standard eigenvalue problem}
The motivation of our proposed TRPL+K method is to extend the computationally efficient EigIFP method with the thick and locally optimal restarting techniques of GD+K. 
It can also be viewed as an extension of thick-restart Lanczos to allow for locally optimal restarting and preconditioning.
We note that the JD method with CG as inner iteration and GD+K as the outer method is even more computationally efficient per step because the inner Krylov space does not need to be stored. However, the result of CG is a correction to a single Ritz vector that does not benefit convergence to nearby eigenpairs, which is left to the subspace acceleration of the outer iteration. 
With TRPL+K we hope that the inner iteration generates useful correction information to all $p$ required eigenvectors. For simplicity, we describe the method in detail for the standard eigenvalue problem only.

Extending EigIFP to include thick restarting is rather straightforward. Without preconditioning, the $K_F$ space in (\ref{eq:subspace_eigifp}) is a Krylov space so it can be restarted as in thick-restart Lanczos (\ref{eq:subspace_trlan}). With preconditioning, $K_F$ is still a Krylov space but of the preconditioned matrix $M(A-\rho I)$. Note that if $\rho$ differs between cycles so do the matrices of the corresponding Krylov subspaces. At the end of cycle $i+1$, the RR must be performed on matrix $A$ or $A_\rho= A-\rho I$ to compute the new Ritz vectors 
$X_{i+1} = [x_{i+1,1}, \ldots, x_{i+1,p}]$. 
This implies that the thick restart vectors $X_i$ used in the basis of cycle $i+1$ do not form vectors of a Krylov sequence.
After restart, we have $T = X_i^T A X_i = \text{diag}([\theta_{i,1},\ldots ,\theta_{i,p}])$, but we cannot use the TRLan relations for the subsequent Krylov vectors.

To efficiently build such an augmented Krylov space we can use a technique based on an FGMRES like method \cite{chapman1997deflated} or on the GCRO method \cite{Sturler_GCR,Sturler_trunc}.
We have followed a GCRO like method to build a Krylov basis $G_{i+1}$ orthogonal to $X_{i} = [x_{i,1}, \ldots, x_{i,p}]$, i.e., we build 
$\text{span}(G_{i+1}) = \mathcal{K}_m\left((I-X_iX_i^T)M(A-\rho I), (I-X_iX_i^T)Mr_1\right)$, with $r_1 = (A-\rho I)x_{i,1}$. 
This allows us to build the projection matrix $T = [X_i,G_{i+1}]^T A [X_i, G_{i+1}]$ without additional matrix-vector products.
At step $j$ of the inner Krylov method, before we compute $G_{i+1}(:,j+1)$ we compute
\begin{equation}
\label{eq:UpdateT}
\begin{aligned}
    z = & \ A G_{i+1}(:,j) \\
    T(1\!:\!p, p+j) = & \ X_i^T z \\
    T(p+1\!:\!p+j, p+j) = & \ G_{i+1}(:,1\!:\!j)^Tz.
\end{aligned}
\end{equation}
Then, we continue with the inner method
$G_{i+1}(:,j+1) = (I-X_iX_i^T)M(z-\rho x_{i,1})$.

Employing locally optimal restarting to the above thick restarted preconditioned Lanczos (TRPL) is more involved. 
At cycle $i+1$, after the inner method concludes its $m$ steps, we perform RR to obtain $X_{i+1}$ using the space 
$U_{i+1}(:,1\!\!:\!m+p) = [X_i, G_{i+1}]$.
We want to augment this space with some `previous' directions, $X^{prev}$.
A choice similar to GD+K does not work. GD+K uses as $X^{prev}$ the Ritz vectors from the penultimate step of the Krylov method before restart,
i.e., the Ritz vectors from the subspace
$U_i(:,1\!:\!p+m-1) = [X_{i-1}, G_i(:,1\!:\!m-1)]$.
The idea is that the optimal projection over $U_i(:,1\!:\!p+m+1)$ (i.e., the next step of the unrestarted method) can be approximated through the Ritz vectors of the last two iterations and the residual. But our method does not optimize over the $X^{prev}$ directions to expand the basis at cycle $i+1$; it builds a Krylov space.

 If we assume that our inner method was a polynomial returning only one vector $s(A)r_1$ (not the entire $G_{i+1}$ space of size $m$) to be used in the outer RR and that we were seeking $p=1$ eigenpair, the outer method would be similar to LOBPCG on the operator $s(A)$. For this operator, the choice for locally optimal restarting would be $X^{prev}=X_{i-1}$, i.e., the Ritz vector at the beginning of the previous cycle. We take the liberty to use LOBPCG's choice $X^{prev}$ for our method, which we now call TRPL+K. The space at the end of cycle $i+1$ should include $X_i, G_{i+1}$, and $X_{i-1}$. 

The order of the three blocks in the search subspace deserves a careful discussion. Since $X_i$ is produced from a basis that includes $X_{i-1}$ at cycle $i$, it is possible to orthogonalize the two sets implicitly and compute their interaction on the projection matrix $T_{i+1}$ without matrix-vector products. 
Then, by augmenting our space not only with $X_i$ but with span($[X_i, X_{i-1}]$), we could build $G_{i+1}$ as described above. However, we noticed experimentally that this choice does not perform well. 

We observed much better convergence if the previous vectors $X_{i-1}$ were included in the basis after $G_{i+1}$ was computed. The disadvantage is that $ X_{i-1}$ has to be explicitly orthogonalized against the rest of the basis vectors and $l$ matrix-vector products have to be performed to compute the resulting $T$. However, the relative expense is small for large $m$ (inner iterations), and typically $l=1$ gives very close to optimal convergence while more previous vectors give little additional improvement. 

We can now describe the space that TRPL+K builds. 
Let $X_i = [x_{i,1},\ldots,x_{i,p}]$, 
    $C = (I-X_iX_i^T)M(A-\rho_i I)$,
and $u_1 = C x_{i,1}$. 
Then, at the end of cycle $i+1$ TRPL+K computes the Ritz vectors from the subspace 
\begin{equation}\label{eq:subspace_trpl+k}
    K_{TRPL+K} = \text{span} \{ \underbrace{x_{i,1},\ldots,x_{i,p}}_{\text{Wanted Ritz vector}} \underbrace{u_1, C u_1, \ldots, C^{m-1} u_1}_{\text{Lanczos iterations}} \
    \underbrace{x_{i-1,1},\ldots,x_{i-1,l}}_{\text{Previous Ritz vector}}\ \}.
\end{equation}
Algorithm \ref{alg:trpl+k} summarizes TRPL+K for finding $p$ smallest eigenpairs of $A$. For simplicity we also let the minimum thick restart size be $p$. However, the algorithm can thick restart with any size $\hat{p}>p$. Note also that the preconditioner $M$ may change at every cycle with the goal to approximate $(A-\rho_i I)^{-1}$. 

Finally, we note that the algorithm can be easily extended to the generalized eigenvalue problem, $Av = \lambda Bv$ with $B$ symmetric positive definite. We forego this description for simplicity. 
The analysis in the next section is based on a simplified version of our algorithm for the generalized eigenvalue problem, aiming to compute only the lowest eigenpair $(\lambda_1,v_1)$ ($p=1$), replacing the previous Ritz vector $x_{i-1,1}$ in \eqref{eq:subspace_trpl+k} with a search direction connecting $x_{i-1,1}$ and $x_{i,1}$ in two consecutive cycles. We show the algorithm with soft locking but hard locking can also be applied. 

\begin{algorithm}[H]
\caption{Thick-Restart Preconditioned Lanczos +K method}\label{alg:trpl+k}
\begin{algorithmic}[1]
    \STATEx {\bf Input:} matrix $A \in \R^{n \times n}$, preconditioner $M \in \R^{n \times n}$,  $p$ the number of desired eigenpairs, $X = [x_1\ldots x_p]$ any available initial approximations to the $p$ eigenpairs, $x^T_i x_j = \delta_{ij}$, maximum basis size $q$, maximum number of retained previous vectors $l$, maximum number of cycles $maxIter$
    \STATEx {\bf Output:} approximate eigenvalues $\theta_1,\ldots,\theta_p$, and  eigenvectors $x_1, \ldots, x_p$.
	\STATE Perform RR on the orthonormal set $U = X =[x_1,\ldots ,x_p]$,  $T(1\!:\!p,1\!:\!p) \leftarrow X^TAX$
	\STATE Let $Y\Theta Y^T= X^TAX$ be the eigen-decomposition of $X^TAX$. Reset $U,X \leftarrow UY$ (the $p$ Ritz vectors) and $T \leftarrow \Theta$. Set target $t=1$, $\rho \leftarrow x_t^T A x_t$, $X^{prev} \leftarrow [\ ]$
	\FOR{$k=1:maxIter$}
		\STATE Use Lanczos (\ref{eq:UpdateT}) to generate $G$ containing the orthonormal basis vectors for \STATEx{\ \ }  $\mathcal{K}_{m}((I-XX^T)M(A - \rho I),u_1)$ and the projection $T(1\!:\!p+m,\ p+1\!:\!p+m)$ 
		\STATE Set $U(:,1\!:\!p+m) \leftarrow [X, G]$	
		\IF{$k > 1$}
			\STATE Orthogonalize $X^{prev}$ against $U(:,1\!:\!p+m)$ to build $U(:,p+m+1\!:\!q)$ 
			\STATE Compute the remaining $T$ such that $T = U^TAU$  
		\ENDIF	
		\STATE $X^{prev} = [x_1^{prev}, \ldots, x_l^{prev}] \leftarrow [x_t, \ldots, x_{t+l-1}]$ \ (up to a $\min(t+l-1,p)$)
		\STATE Compute eigenpairs of $T$, 
		$T = Y \Theta Y^T$
		    and Ritz pairs: ($\theta_i, x_i = U Y(:,i)$)
		\IF{$\|Ax_t - \theta_t x_t\| \leq \epsilon$ }
		\STATE flag $(\theta_t,x_t)$,
		remove $x_1^{prev}$ from $X^{prev}$, and advance target $t=t+1$
        \ENDIF		
		\STATE Set $\rho \leftarrow \theta_t$, $U(:,1\!:\!p+1) \leftarrow [UY(:,1\!:\!p), M(Ax_t - \rho x_t)]$,
		$X=U(:,1\!:\!p)$ 
	\ENDFOR
\end{algorithmic}
\end{algorithm}

%%%%%%%%%%%%%%%%%%%%%%%%%%%%%%%%%%%%%%%%%%%%%%%%%%%%%%%%%%%%%%%%%%%
\section{Asymptotic Convergence Analysis of a PL+1 Method}
%\section{Global quasi-optimality of preconditioned Lanczos+1}
\label{sec:convergence analysis}
In this section, we consider a preconditioned Lanczos+1 method (i.e., TRPL+K with $p=1=l$) for computing the lowest eigenpair $(\lambda_1,v_1)$ of $(A,B)$, establishing an \emph{asymptotic global quasi-optimality} of the method. 

\subsection{Preliminaries} 
Consider the matrix pencil $(A,B)$, where $A, B \in \mathbb{R}^{n\times n}$ are symmetric, typically large and sparse, and $B$ is positive definite. Let $\lambda_1 < \lambda_2 \leq \ldots \leq \lambda_n$ and $\{v_i\}$ be the eigenvalues and eigenvectors of the matrix pencil, such that $Av_i = \lambda_i Bv_i$, $(v_i,v_j)_B = v_i^TBv_j = \delta_{ij}$, $\|v_i\|_B = 1$, $1 \leq i,j \leq n$. 

Let $x$ be an approximation to $v_1$, the eigenvector associated with the lowest eigenvalue $\lambda_1$, with the decomposition 
\begin{eqnarray}\label{xdecomp}
x = v_1 \cos \theta + f \sin\theta, \:\mbox{ where }\: \theta \ne 0,\; f \perp Bv_1, \mbox{ and } \|f\|_B = 1.\vspace{-0.1in}
\end{eqnarray}
This suggests that $\|x\|_B = \left(x^TBx\right)^{\frac{1}{2}} = \left(\|v_1\|_B^2 \cos^2\theta + \|f\|_B^2 \sin^2 \theta \right)^{\frac{1}{2}}=1$. Since $f \perp Bv_1$, it has the form $f = \sum_{j=2}^n s_j v_j$, where the scalars $\{s_j\}_{j=2}^n$ satisfy $\sum_{j=2}^n s_j^2 = \sum_{j=2}^n s_j^2 \|v_j\|_B^2=\|f\|^2_B = 1$. The Rayleigh quotient of $x$ is hence $$\textstyle \rho(x) = \frac{x^TAx}{x^TBx} = x^TAx = v_1^TAv_1 \cos^2 \theta + f^TAf \sin^2\theta = \lambda_1 \cos^2\theta + \rho(f) \sin^2\theta,$$where $\rho(f) = \frac{f^TAf}{f^TBf} = f^TAf=\sum_{j=2}^n s_j^2 \lambda_j \in [\lambda_2,\lambda_n]$. \smallskip

\begin{prop}
Let $x$ be a vector with $\|x\|_B=1$. The gradient and the Hessian of $\frac{1}{2}\rho(x)$ with respect to $x$, respectively, are 
{\footnotesize
\begin{eqnarray}\label{gradHessrho}
\qquad \nabla \frac{1}{2}\rho(x)  &=& \frac{1}{x^TBx}\left(A - \rho(x) B\right)x = Ax -\rho(x) Bx, \qquad\mbox{ and } \\ \nonumber
\qquad \nabla^2 \frac{1}{2}\rho(x)  &=& \frac{1}{x^TBx}\left\{A-\rho(x) B-\frac{2}{x^TBx}(Ax-\rho(x) Bx) (Bx)^T-\frac{2}{x^TBx}Bx\,(Ax-\rho(x) Bx)^T \right\} \\
&=& A-\rho(x) B-2(Ax-\rho(x) Bx) (Bx)^T-2Bx\,(Ax-\rho(x) Bx)^T.
\end{eqnarray}
}
\end{prop}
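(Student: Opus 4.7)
The plan is to treat $\tfrac{1}{2}\rho(x) = \tfrac{x^T A x}{2\,x^T B x}$ as an ordinary smooth scalar function on $\R^n\setminus\{0\}$, compute the Euclidean gradient and Hessian without any manifold constraint, and then substitute $\|x\|_B = 1$ at the very end to obtain the simplified forms. Because $A$ and $B$ are symmetric, $\nabla(x^T A x) = 2Ax$ and $\nabla(x^T B x) = 2Bx$. The quotient rule then yields
\begin{equation*}
\nabla \tfrac{1}{2}\rho(x) \;=\; \frac{Ax\,(x^T B x) - (x^T A x)\,Bx}{(x^T B x)^2} \;=\; \frac{1}{x^T B x}\bigl(Ax - \rho(x)\,Bx\bigr),
\end{equation*}
and plugging in $x^T B x = 1$ produces the stated simplification $Ax - \rho(x) Bx$.

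For the Hessian, I would write this gradient as $N(x)/D(x)$ with $N(x) = Ax - \rho(x) Bx$ and $D(x) = x^T B x$, and differentiate this vector-valued quotient:
\begin{equation*}
\nabla^2 \tfrac{1}{2}\rho(x) \;=\; \frac{\nabla N(x)}{D(x)} \;-\; \frac{N(x)\,(\nabla D(x))^T}{D(x)^2}.
\end{equation*}
The two ingredients are $\nabla D(x) = 2Bx$ and the Jacobian $\nabla N(x)$. For the latter, the product rule applied to $\rho(x)\,Bx$ gives $\nabla N(x) = A - \rho(x) B - Bx\,(\nabla \rho(x))^T$, and substituting $\nabla \rho(x) = \tfrac{2}{D(x)}N(x)$ from the previous step introduces the rank-one correction $-\tfrac{2}{D(x)}Bx\,N(x)^T$. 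Assembling all three contributions yields the bracketed expression
\begin{equation*}
\nabla^2 \tfrac{1}{2}\rho(x) \;=\; \frac{1}{D(x)}\Bigl\{A - \rho(x) B - \tfrac{2}{D(x)}\bigl(Ax-\rho(x)Bx\bigr)(Bx)^T - \tfrac{2}{D(x)}Bx\bigl(Ax-\rho(x)Bx\bigr)^T\Bigr\},
\end{equation*}
and imposing $D(x) = 1$ delivers the second stated formula.

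There is no deep step here; the main bookkeeping hurdle is making sure the two rank-one corrections, which arise from different sources (one from differentiating $1/D(x)$ in the outer quotient rule, the other from differentiating $\rho(x)\,Bx$ inside $\nabla N(x)$), combine into the transposed pair $N(x)(Bx)^T + Bx\,N(x)^T$ so that the Hessian is manifestly symmetric, as it must be since $A$ and $B$ are. Propagating the factor $2/(x^T B x)$ from $\nabla \rho(x)$ correctly through the substitution is the only place where a sign or factor slip would be easy; otherwise the derivation is a routine application of the product and quotient rules.
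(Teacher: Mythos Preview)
Your computation is correct and self-contained; the gradient follows from the quotient rule exactly as you wrote, and your bookkeeping for the two rank-one terms in the Hessian (one from the outer $1/D$ factor, one from differentiating $\rho(x)Bx$ inside $N$) is accurate and yields the symmetric expression in the statement.

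The paper itself does not carry out this calculation. Its entire proof is a one-line citation: it invokes Proposition~3.1 of Szyld and Xue (2016) with the substitution $T(\rho)=\rho B - A$. So your approach is genuinely different in that you derive the formulas from scratch via elementary calculus, whereas the paper imports them as a special case of a more general result on parameter-dependent operators. Your route is more elementary and makes the paper self-contained at this point; the paper's route is terser and situates the identity within a broader framework, but requires the reader to chase a reference.
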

\begin{proof} Done by letting $T(\rho) = \rho B- A$ in \cite[Proposition 3.1]{Szyld.Xue.2016}.
\end{proof}
\smallskip

For small $\theta$, $\rho(x)$ is a second order approximation to $\lambda_1$, i.e., 
\begin{eqnarray}\label{rqerror}
\rho(x)-\lambda_1 = \sin^2 \theta (\rho(f) - \lambda_1),
\end{eqnarray}
and hence $\nabla \frac{1}{2}\rho(x)$, i.e., the eigenresidual associated with $x$, is
\begin{eqnarray}\nonumber
&&Ax-\rho(x) Bx = \left(A-\rho(x) B\right)(v_1 \cos\theta+f\sin\theta) \\ \nonumber
%&=& \cos\theta \left(A-\rho(x) B\right)v_1+\sin\theta \left(A-\rho(x) B\right)f \\ \nonumber
&=& \left(\lambda_1-\rho(x)\right)\cos\theta Bv_1 + \sin\theta \left(A-\rho(x) B\right)f \\ \nonumber
%&=& \sin\theta\left[\left(Af - \rho(x) Bf\right)- \sin\theta\cos\theta\left(\rho(f)-\lambda_1\right)Bv_1 \right] \\ \nonumber
%&=& - \sin^2\theta \cos\theta(\rho(f)-\lambda_1)Bv_1 + \sin\theta \left(A-\rho(x) B\right)f \\ \nonumber
&=& \textstyle\sin\theta \big[\sum_{j=2}^n s_j(\lambda_j-\rho(x))Bv_j -\sin\theta \cos\theta(\rho(f)-\lambda_1)Bv_1\big].
\end{eqnarray}
Since $\lambda_1 < \lambda_2$ and $\sum_{j=2}^n s_j^2 = 1$, $Af-\rho(x)Bf = \sum_{j=2}^n s_j(\lambda_j-\rho(x))Bv_j$ will not vanish as $\theta \rightarrow 0$ and $\rho(x) \rightarrow \lambda_1$. Therefore, for sufficiently small $\theta$, 
\begin{eqnarray}\label{eigresbd}
\quad (1-\delta) \sin\theta \|Af-\rho(x)Bf\| \leq \|Ax - \rho(x) Bx\| \leq (1+\delta) \sin\theta \|Af-\rho(x)Bf\|
\end{eqnarray}
for some small $\delta > 0$ independent of $\theta$, or simply $
\|Ax-\rho(x)Bx\| = \mathcal{O}(\sin\theta)$.

\subsection{A preconditioned Lanczos+1 method}
The framework of a preconditioned Lanczos+1 method is summarized in Algorithm \ref{alg:pl+1}. This is a special case of Algorithm \ref{alg:trpl+k} for $p=l=1$ but extends it to the generalized eigenvalue problem. 
It can also be considered an extension of the PL method enhanced with the search direction adopted in LOPCG.
It is important to note that when $X$ is a single vector, $[X_k, G_{k+1}]$ in (\ref{eq:subspace_trpl+k}) is a Krylov space and 
$\text{span}\{x_{k,1}, u_1, C u_1, \ldots, C^{m-1} u_1\} = 
\text{span}\{x_{k,1}, Ax_{k,1},\ldots A^m x_{k,1}\}$ (see Lemma 4.1 in \cite{stathopoulos1998restarting}).  
Since we only look for one eigenvalue, in the rest of this section we drop the second subscript notation, and use the subscript $k$ to represent the cycle. Hence the simpler form in Algorithm \ref{alg:pl+1}. 

\begin{algorithm}[H]
\caption{Preconditioned Lanczos+1 method for computing $(\lambda_1,v_1)$ of $(A,B)$}\label{alg:pl+1}
\begin{algorithmic}[1]
\STATE Choose an SPD preconditioner $M \approx (A-\sigma B)^{-1}$ ($\sigma < \lambda_1$), tolerance $\delta > 0$, $m > 0$
\STATE Choose vector $x_0$ with $\|x_0\|_B = 1$, set $\rho_0 = \frac{x_0^TAx_0}{x_0^TBx_0}$ and $r_0 = (A-\rho_0 B)x_0$.
\FOR{$k = 0,\, 1,\, \ldots, $ until the convergence, i.e., $\|r_k\| \leq \delta$}
%5. \quad\quad Orthogonalize $x_k$ against $\{v_i\}_{i=1}^j$ with respect to $[\cdot,\cdot]$ and normalize $x_k$\\
    \STATE Form $G_k$ containing $B$-normalized basis vectors of $M\mathcal{K}_m\left((A-\rho_k B)M,r_k\right)$
    \STATEx{\ \ } $=\mathrm{span}\left\{M(A-\rho_k B)x_k, \left[M(A-\rho_k B)\right]^2 x_k,\ldots, \left[M(A-\rho_k B)\right]^m x_k\right\}$ 
    \STATE Form $Q_k= \left\{\begin{array}{ll}\!\! [x_k,G_k] & \!\!(k=0) \\\!\! \left[x_k,G_k,p_{k-1}\right] & \!\!(k>0)\end{array}\!\!\right.$, perform the RR projection and
    \STATEx{\ \ } solve $Q_k^TA Q_k w = \rho Q_k^TBQ_k w$ for the lowest primitive Ritz pair $\big(\rho_k,w_k\big)$.
    \STATE $g_k \!= G_kw_k(2\!:\!m\!+\!1)$
    \STATEx{\ \ } $\widetilde{x}_{k+1}\!=Q_kw_k \!= \!\left\{\!\begin{array}{ll}\!\! x_kw_k(1)\!+g_k\! & \!\!\!, (\mbox{if } k=0) \\ \!\! x_kw_k(1)\!+g_k\!+p_{k-1}w_k(m\!+\!2) \!\!& \!\!\!, (\mbox{if } k>0)\end{array}\!\!\!\right.$,
    \STATEx{\ \ } $x_{k+1} = \frac{\widetilde{x}_{k+1}}{\|\widetilde{x}_{k+1}\|_B}$, $\rho_{k+1} = \frac{x_{k+1}^TAx_{k+1}}{x_{k+1}^TBx_{k+1}}$, and $r_{k+1} = (A-\rho_{k+1} B)x_{k+1}$.
%$\widetilde{x}_{k+1} = Q_k {w_k}$, $x_{k+1} = \frac{\widetilde{x}_{k+1}}{\|\widetilde{x}_{k+1}\|_B}$, $\rho_{k+1} = \frac{x_{k+1}^TAx_{k+1}}{x_{k+1}^TBx_{k+1}}$, $r_{k+1} = (A-\rho_{k+1} B)x_{k+1}$. \\
\STATE $\widetilde{p}_k = \left\{\begin{array}{ll} \!\!g_k & \!\!\!(k=0) \\ \!\!g_k - \frac{p_{k-1}^T(A-\rho_{k-1}B)g_k}{p_{k-1}^T(A-\rho_{k-1} B)p_{k-1}}p_{k-1} & \!\!\!(k>0) \end{array}\right.$, $p_k = \frac{\widetilde{p}_k}{\|\widetilde{p}_k\|_B}$.
\ENDFOR
\end{algorithmic}
\end{algorithm}

%To understand the superior performance of our new algorithm, we present its simplified variant, namely, a preconditioned Lanczos +1 algorithm, for computing the lowest eigenpair of $(A,B)$, and we investigate its global quasi-optimality. 
In each outer iteration $k$ ($k>0$), at steps 4 and 5, an augmented Krylov subspace
$\mathrm{range}(Q_k) = \mathrm{span}\{x_k\}+M\mathcal{K}_m\left((A-\rho_k B)M,r_k\right)+\mathrm{span}\{p_{k-1}\}$, i.e., 
\begin{eqnarray}\label{spacealg1}
\mathcal{K}_{m+1}\left(M(A-\rho_k B),x_k\right)+\mathrm{span}\{p_{k-1}\}
\end{eqnarray}
is formed as the subspace for the RR projection. Algorithm~\ref{alg:pl+1} is slightly different from the variant used in practice at step 7, where a commonly adopted approach sets $p_{k}=g_k+p_{k-1}w_k(m+2)$, using the $(m+2)$nd element of primitive Ritz vector $w_k$ as the coefficient for $p_{k-1}$.
Our choice of such a particular linear combination makes it easy to show the near conjugacy between $p_{k-1}$ and $p_k$. 

We present a preliminary convergence result for Algorithm~\ref{alg:pl+1}, which is essentially a restatement of \cite[Theorem 3.4]{golub2002inverse}. Here we incorporate preconditioning and note the fact that the space for projection used in \cite{golub2002inverse}, $\mathcal{K}_{m+1}\left(M(A-\rho_k B),x_k\right)$, is a \emph{subspace} of the one constructed by Algorithm~\ref{alg:pl+1} in \eqref{spacealg1}.

\begin{thm}\label{thmcvgrate}
Let $\lambda_1 < \lambda_2 \leq \ldots \leq \lambda_n$ be the eigenvalues of $(A,B)$, $(\rho_{k},x_{k})$ be the $k$-th iterate of Algorithm~\ref{alg:pl+1}, and $LDL^T \approx (A-\rho_k B)$ a preconditioner where $D$ is a diagonal matrix of $\pm1$ elements. Assume that $C_k = L^{-1}(A-\rho_k B)L^{-T}$ has eigenvalues $\sigma_1 < \sigma_2 \leq \ldots \leq \sigma_n$, and satisfies $C_k u_1 = \sigma_1 u_1$ with $\|u_1\|_2=1$. If $\lambda_1 < \rho_k < \lambda_2$, then $
\rho_{k+1}-\lambda_1 \leq (\rho_k-\lambda_1)\epsilon_m^2 + (\rho_k-\lambda_1)^{\frac{3}{2}}\epsilon_m \left(\frac{\|L^{-1}BL^{-T}\|}{\sigma_2}\right)^{\frac{1}{2}}+\delta_k$,
where $0 < \delta_k \equiv \rho_k-\lambda_1+\frac{\sigma_1}{u_1^TL^{-1}BL^{-T}u_1}=\mathcal{O}\left((\rho_k-\lambda_1)^2\right)$, and $\epsilon_m = \min_{p \in \mathcal{P}_m,p(\sigma_1) = 1}\max_{2\leq i \leq n} |p(\sigma_i)|$, with $\mathcal{P}_m$ denoting the set of all polynomials of degree not greater than $m$. 
\end{thm}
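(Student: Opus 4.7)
The plan is to reduce this directly to the EigIFP convergence result \cite[Thm.\ 3.4]{golub2002inverse} by exploiting the subspace inclusion noted just before the theorem. The Rayleigh--Ritz projection of Algorithm \ref{alg:pl+1} is performed over $\mathrm{range}(Q_k)$, which contains $\mathcal{K}_{m+1}(M(A-\rho_k B), x_k)$ as a subspace. By the monotonicity of the minimum Rayleigh quotient under subspace enlargement, the primitive Ritz value $\rho_{k+1}$ produced from $Q_k$ is bounded above by the Rayleigh--Ritz approximation that would be obtained from $\mathcal{K}_{m+1}(M(A-\rho_k B), x_k)$ alone. Hence it suffices to establish the stated bound for the smaller (pure Krylov) case.

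Next I would carry out the polynomial approximation argument in the transformed coordinates. Write $A - \rho_k B = L D L^T \widehat{C}_k$ loosely, and more precisely introduce $C_k = L^{-1}(A-\rho_k B)L^{-T}$ with eigenpairs $(\sigma_i, u_i)$. The Krylov subspace $M\mathcal{K}_m((A-\rho_k B)M, r_k)$, when multiplied by $L^T$, becomes (up to a diagonal similarity by $D$) a Krylov subspace $\mathcal{K}_m(C_k, L^{-1}r_k)$ enlarged by $L^T x_k$. For any polynomial $p \in \mathcal{P}_m$ with $p(\sigma_1)=1$, form the trial vector $y = p(C_k) L^T x_k$ and pull it back to $\widetilde{x} = L^{-T} y$; this $\widetilde{x}$ lies in $\mathrm{span}\{x_k\}+M\mathcal{K}_m((A-\rho_k B)M, r_k)$. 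Expanding $L^T x_k = \sum_i \alpha_i u_i$ in the eigenbasis of $C_k$, compute the generalized Rayleigh quotient of $\widetilde{x}$ for the pencil $(A,B)$, which equals $\rho_k + \frac{y^T C_k y}{y^T L^{-1} B L^{-T} y}$. Use $|p(\sigma_i)| \leq \epsilon_m$ for $i \geq 2$ to bound the numerator and a one-sided bound involving $\sigma_2$ for the denominator, then minimize over admissible $p$.

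The delicate bookkeeping step, which I expect to be the main obstacle, is extracting the second-order smallness of $\delta_k$. After the polynomial argument one is left with a leading term of the form $\rho_k - \lambda_1 + \sigma_1/(u_1^T L^{-1}BL^{-T} u_1)$, which must be shown to be $\mathcal{O}((\rho_k-\lambda_1)^2)$. The intuition is that $L^{-T} u_1$ is an approximate eigenvector of the pencil $(A-\rho_k B, B)$ with eigenvalue $\sigma_1/(u_1^T L^{-1}BL^{-T}u_1)$, i.e.\ a Rayleigh--Ritz-like approximation to $\lambda_1 - \rho_k$. So $\delta_k$ is the difference between this approximation and the true $\lambda_1 - \rho_k$, shifted back to $\rho_k - \lambda_1$ plus the approximation; by the standard second-order accuracy of the Rayleigh quotient at an eigenvector (applied here to the smallest eigenpair of $C_k$ viewed in the pencil $(L^{-1}(A-\rho_k B)L^{-T}, L^{-1}BL^{-T})$), this difference is $\mathcal{O}((\rho_k-\lambda_1)^2)$.

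Finally, I would combine the three pieces: the $\epsilon_m^2$ contribution from the $i\geq 2$ components of $y^T C_k y$, the cross term bounded by $(\rho_k-\lambda_1)^{3/2}\epsilon_m(\|L^{-1}BL^{-T}\|/\sigma_2)^{1/2}$ arising from Cauchy--Schwarz on the off-diagonal interaction between $\alpha_1 u_1$ and the tail, and the residual $\delta_k$. Assembling these and invoking the subspace monotonicity from the first step yields the claimed inequality for $\rho_{k+1} - \lambda_1$. The argument is essentially the one in \cite{golub2002inverse} executed in the preconditioned coordinates defined by $L$, so I would present it with explicit attribution and focus the write-up on the subspace-inclusion reduction and on verifying that $\delta_k$ remains $\mathcal{O}((\rho_k-\lambda_1)^2)$ in the preconditioned setting.
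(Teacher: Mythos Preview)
Your proposal is correct and follows essentially the same approach as the paper: the paper presents this theorem as a direct restatement of \cite[Theorem~3.4]{golub2002inverse}, observing only that the Krylov subspace $\mathcal{K}_{m+1}(M(A-\rho_k B),x_k)$ used there is contained in $\mathrm{range}(Q_k)$, so the Rayleigh--Ritz minimum over $Q_k$ can only be smaller. Your write-up simply unpacks the Golub--Ye argument in the preconditioned coordinates, which the paper defers entirely to the citation.
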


{\bf Remark.} The value of $\epsilon_m$ depends on the quality of the preconditioner $M$ approximating $A-\rho_k B$. If $M = A-\sigma B$, then $(A-\rho_k B)V = B V (\Lambda - \rho_k I)$ and $MV = B V (\Lambda-\sigma I)$, where $\Lambda = \mathrm{diag}(\lambda_1,\ldots,\lambda_n)$ is the diagonal matrix of the eigenvalues of $(A,B)$ and $V=[v_1,\ldots,v_n]$ contains the corresponding eigenvectors. It follows that $M(A-\rho_k B)V = V(\Lambda-\sigma I)^{-1}(\Lambda-\rho_k I)$. That is, $v_i$ is an eigenvector of $M(A-\rho_k B)$ associated with eigenvalue $\sigma_i = \frac{\lambda_i -\rho_k}{\lambda_i - \sigma}$. Since $C_k = L^{-1}(A-\rho_k B)L^{-T}$ and $L^{-T}L^{-1}(A-\rho_k B)$ have identical spectrum, $\sigma_i$ is also an eigenvalue of $C_k$. Therefore, if $\sigma$ and $\rho_k$ are close to $\lambda_1$, then $\sigma_2,\ldots,\sigma_n$ are all close to $1$, and hence $\epsilon_m = \min_{p \in \mathcal{P}_m,p(\sigma_1) = 1}\max_{2\leq i \leq n} |p(\sigma_i)|$ would be fairly small with a small value of $m$, indicating a fast rate of convergence in outer iteration.

\subsection{Global quasi-optimality} Theorem \ref{thmcvgrate} shows that Algorithm~\ref{alg:pl+1} converges \emph{at least} linearly with an asymptotic factor no greater than $\epsilon_m^2$. Our goal is to explore the role of the search directions $p_k$ (the `+1' strategy), which helps the algorithm achieve a greatly improved  convergence rate associated with global quasi-optimality. The global quasi-optimality is defined as follows.
\begin{dftn}\label{defnglbopt}Consider an iterative method for computing the lowest eigenpair $(\lambda_1,v_1)$ of a real symmetric matrix pencil $(A,B)$ with positive definite $B$. Let $x_0$ be the starting vector, $x_k$ be the approximation obtained  at step $k$, and $\theta_k = \angle(x_k,v_1)_B = \cos^{-1}\frac{(v_1,x_k)_B}{\|v_1\|_B\|x_k\|_B}$ be the error angle of $x_k$. Let $U_1,U_2,\ldots$ be a sequence of subspaces of increasing dimension, such that for each $k\geq 1$, $x_k \in U_k$, and $U_i \subset U_j$ for all $1\leq i < j$. Let $y_k^*\in U_k$ be the global minimizer of the Rayleigh quotient $\rho(x)=\frac{x^TAx}{x^TBx}$ in $U_k$. Then the iterate $x_k$ achieves \emph{global quasi-optimality} if 
\begin{eqnarray}\label{glbqopt}
\lim_{\theta_0 \rightarrow 0}\frac{\rho(x_k)-\rho(y_k^*)}{\rho(x_k)-\lambda_1} = 0.
\end{eqnarray}
\end{dftn}

\subsubsection{Linear convergence assumption}To show the global quasi-optimality of Algorithm~\ref{alg:pl+1}, we first make an assumption of its \emph{precisely} linear convergence.

\begin{aspt}\label{assumptionlincvg}
Assume that Algorithm~\ref{alg:pl+1} starting with initial $\rho(x_0) \in (\lambda_1,\lambda_2)$ converges {precisely} linearly \textup{(}not superlinearly or faster\textup{)} to $\lambda_1$; in other words, there exist constants $0 < \underline{\xi} < \bar{\xi} < 1$, independent of the progress of Algorithm~\ref{alg:pl+1}, such that
\begin{eqnarray}\label{lincvgfactor}
\underline{\xi}^{k-j}(\rho_j-\lambda_1) \leq \rho_k - \lambda_1 \leq \bar{\xi}^{k-j}(\rho_j - \lambda_1), \:\mbox{ for all }\: 0 \leq j < k.
\end{eqnarray}
\end{aspt}

The assumption on the existence of a lower bound $\underline{\xi}$ on the convergence factor is realistic. Given $M \approx (A-\sigma B)^{-1}$ with a \emph{fixed} $\sigma < \lambda_1$, and a \emph{fixed} dimension $m$ of the preconditioned Krylov subspace, extensive experiments suggest that Algorithm~\ref{alg:pl+1} exhibits simply linear convergence as the outer iteration proceeds. %We shall use the assumption of linear convergence \eqref{assumptionlincvg} to study the global quasi-optimality of Algorithm~\ref{alg:pl+1} with a sufficiently small initial error angle $\theta_0 = \angle(x_0,v_1)_B$. 

Assumption \ref{assumptionlincvg} has an equivalent form in terms of angles. Consider two iterates $x_j = v_1\cos\theta_j+f_j\sin\theta_j$ and $x_k = v_1\cos\theta_k + f_k \sin\theta_k$, $0 \leq j < k$, where $f_j, f_k \perp Bv_1$, $\|f_j\|_B=\|f_k\|_B = 1$, and $\|x_j\|_B = \|x_k\|_B = 1$. From \eqref{rqerror}, we have \eqref{lincvgfactor} equivalent to  $\underline{\xi}^{k-j}\sin^2\theta_j(\rho(f_j)-\lambda_1) \leq \sin^2\theta_k(\rho(f_k)-\lambda_1) \leq \bar{\xi}^{k-j}\sin^2\theta_j (\rho(f_j)-\lambda_1)$, i.e.,
\begin{eqnarray}\label{assumptionlincvg_eqv}
\textstyle\sqrt{\underline{\xi}}^{k-j} \sqrt{\frac{\rho(f_j)-\lambda_1}{\rho(f_k)-\lambda_1}}\sin\theta_j \leq \sin\theta_k \leq \sqrt{\bar{\xi}}^{k-j} \sqrt{\frac{\rho(f_j)-\lambda_1}{\rho(f_k)-\lambda_1}}\sin\theta_j.
\end{eqnarray}

\subsubsection{Relevant spaces}
To study the global quasi-optimality, we define
\begin{eqnarray}
W_k &=& \mathrm{span}\{g_0,\ldots,g_{k-1}\}=  \mathrm{span}\{Md_0,\ldots,Md_{k-1}\}, \mbox{ where}\\ \nonumber
&& d_\ell \in \mathcal{K}_m\left((A-\rho_\ell B)M,r_\ell\right), \mbox{ and} \\ 
U_k &=& \mathrm{span}\{x_0\}+W_k.
\end{eqnarray}

\begin{lem}\label{lemmaWkxk} For all $k \geq 1$, $W_k = \mathrm{span}\{p_0,\ldots,p_{k-1}\}$ and $x_k \in U_k$. 
\end{lem}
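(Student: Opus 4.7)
The plan is to prove both claims simultaneously by induction on $k$, exploiting the explicit recurrences defining $g_k$, $p_k$, and $\widetilde{x}_{k+1}$ in Algorithm~\ref{alg:pl+1}.

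\textbf{Base case} ($k=1$). By definition $W_1 = \mathrm{span}\{g_0\}$, and since $\widetilde{p}_0 = g_0$ and $p_0$ is just the $B$-normalization of $\widetilde{p}_0$, we immediately get $\mathrm{span}\{p_0\} = \mathrm{span}\{g_0\} = W_1$. Also, $\widetilde{x}_1 = x_0 w_0(1) + g_0 \in \mathrm{span}\{x_0\} + W_1 = U_1$, so $x_1 \in U_1$ after normalization.

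\textbf{Inductive step.} Assume the two statements hold up to index $k$. For the first claim, write $W_{k+1} = W_k + \mathrm{span}\{g_k\}$. The definition of $\widetilde{p}_k$ in step 7 has the form $\widetilde{p}_k = g_k - \alpha_k p_{k-1}$ for the scalar $\alpha_k = \frac{p_{k-1}^T(A-\rho_{k-1}B)g_k}{p_{k-1}^T(A-\rho_{k-1}B)p_{k-1}}$, so $p_k = (g_k - \alpha_k p_{k-1})/\|\widetilde{p}_k\|_B$ lies in $W_k + \mathrm{span}\{g_k\} = W_{k+1}$, and conversely $g_k \in \mathrm{span}\{p_{k-1}, p_k\} \subset \mathrm{span}\{p_0,\ldots,p_k\}$. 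By the inductive hypothesis $W_k = \mathrm{span}\{p_0,\ldots,p_{k-1}\}$, and combining, $W_{k+1} = \mathrm{span}\{p_0,\ldots,p_k\}$. For the second claim, step 6 gives $\widetilde{x}_{k+1} = x_k w_k(1) + g_k + p_{k-1} w_k(m+2)$; by the inductive hypothesis, $x_k \in U_k \subset U_{k+1}$, $g_k \in W_{k+1} \subset U_{k+1}$, and $p_{k-1} \in W_k \subset U_{k+1}$ (using the first claim already established at index $k$). Hence $\widetilde{x}_{k+1} \in U_{k+1}$, and so $x_{k+1} \in U_{k+1}$ after $B$-normalization.

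\textbf{Potential subtlety.} The only thing that needs care is that $\|\widetilde{p}_k\|_B$ and $\|\widetilde{x}_{k+1}\|_B$ are nonzero, so that the normalizations in steps 6--7 are well defined; this is implicitly guaranteed by the assumption that the algorithm has not yet converged (otherwise iteration terminates at step 3). Everything else is a direct bookkeeping argument from the recurrences, so no further obstacle arises.
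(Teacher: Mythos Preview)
Your proof is correct and follows exactly the approach the paper intends: the paper states that the proof is ``straightforward by induction and omitted,'' and your argument fills in precisely that induction using the recurrences for $p_k$ and $\widetilde{x}_{k+1}$ in Algorithm~\ref{alg:pl+1}. The small remark about nonvanishing normalizations is a reasonable aside but not essential to the lemma.
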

The proof is straightforward by induction and omitted. Next, we make an important assumption about $W_k$ for the subsequent analysis.
\begin{aspt}\label{anglev1Wk}Assume that there is a constant $\delta > 0$, independent of $\theta_0=\angle(x_0,v_1)_B$, such that  $\angle(v_1,W_k) \geq \delta$ for all $k \geq 1$.
\end{aspt}

{\bf Remark.} The above assumption is guaranteed to hold if the preconditioner $M$ is equipped with the projector $P=I-\frac{x_0 x_0^TB}{x_0^TBx_0}$. That is, let $\mathbb{M}=P^TMP$ be the preconditioner for Algorithm~\ref{alg:pl+1}, such that $\mathrm{range}(\mathbb{M}^{\dagger})=\left(\mathrm{span}\{Bx_0\}\right)^\perp$. If $x_0$ is sufficiently close to $v_1$, then $W_k \subset \mathrm{range}(\mathbb{M}^{\dagger}) \approx \mathrm{span}\{v_2,\ldots,v_n\}$, and hence $\angle(v_1,W_k) \geq \delta$. The Jacobi-Davidson method uses this strategy to enhance the robustness convergence. 

%From Lemma \ref{lemmaWkxk}, $x_k \in U_k$ with $\|x_k\|_B=1$ can be written as $x_k = \beta_{0k}x_0 + \gamma_{0k}g_{0k}$, where $g_{0k} \in W_k$ with $\|g_{0k}\|_B = 1$. %Also, by letting $j=0$ in Lemma \ref{xjxk_lemma1}, we see that there is a valid decomposition of $x_k$ described in Lemma \ref{xjxk_lemma1} with the normalized \emph{correction vector} $g_{0k} \in W_k$. This observation helps us justify the critical assumption in Lemma \ref{xjxk_lemma1} that $\rho(g_{0k}) - \rho(x_0) \geq d > 0$. In fact, we shall see in the following Lemma that $g_{0k}$ is almost orthogonal to $Mx_0$. 

%For a given $k$, let $y^*\in U_k$ be the global minimizer such that $\rho(y^*) \leq \rho(x)$ for all $x \in U_k$. We shall show that $x_k$ computed by Algorithm~\ref{alg:pl+1} satisfies $\frac{\rho(x_k)-\rho(y^*)}{\rho(x_k)-\lambda_1} \leq \frac{(\underline{\xi}^{-1})^k\sum_{j=3}^k C_j\sin\theta_j}{\lambda_2-\lambda_1}$, where $\theta_j = \angle(x_j,v_1)_B$, and $C_j > 0$ is some constant independent of $\theta_j$. In other words, for a sufficiently small $\theta_0 = \angle(x_0,v_1)_B$, in a modest number of outer steps $k$, Algorithm~\ref{alg:pl+1} yields `locally optimal' iterate $x_j$ almost as good as the corresponding global optimizer $y^* \in U_j$ for all $3 \leq j \leq k$ ($x_1$ and $x_2$ are global minimizers in $U_1$ and $U_2$, respectively). Our analysis will be carried out primarily based on an optimization point of view. 

\subsubsection{Preliminary results}
We present a few preliminary results useful for the proof of the main theorems in the next section. For the sake of readability, we move most of the technical proof of our results to the Appendix. 
\begin{lem}\label{lemmaoptalpha}
Let $x \approx v_1$ in \eqref{xdecomp}, and $p$ be a descent direction for $\rho(x)$ such that $\left( p,\nabla \rho(x) \right) < 0$. Assume that there exists a $\delta > 0$ independent of $\theta=\angle(x,v_1)_B$, such that $\rho(p)-\rho(x) \geq \delta$. Then the optimal step size $\alpha^*$ minimizing $\rho(x+\alpha p)$ is the unique or the smaller positive root of $a(x,p)\alpha^2+b(x,p)\alpha+c(x,p) = 0$, where
\begin{eqnarray}\label{coefabc}
&&a(x,p) = (p^TAp)(p^TBx)-(p^TBp)(p^TAx) = \|p\|^2_B x^T(A-\rho(p)B)p, \\ \nonumber
&&b(x,p) = (p^TAp)(x^TBx)-(p^TBp)(x^TAx) = \|x\|^2_B p^T(A-\rho(x) B) p \\ \nonumber
&& \qquad\quad = \|x\|^2_B\|p\|^2_B\big(\rho(p)-\rho(x)\big) \geq  \|x\|^2_B\|p\|^2_B\delta>0, \:\:\mbox{and}  \\ \nonumber
&&c(x,p) = (p^TAx)(x^TBx)-(p^TBx)(x^TAx) = \|x\|^2_B p^T(A-\rho(x) B)x < 0. %\mbox{\textup{ (descent direction)}.}
\end{eqnarray}
\end{lem}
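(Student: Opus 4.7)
The plan is to compute the critical points of $\rho(x+\alpha p)=N(\alpha)/D(\alpha)$ directly, with $N(\alpha)=x^TAx+2\alpha x^TAp+\alpha^2 p^TAp$ and $D(\alpha)=x^TBx+2\alpha x^TBp+\alpha^2 p^TBp$. A stationary point satisfies $N'(\alpha)D(\alpha)-N(\alpha)D'(\alpha)=0$, which is a polynomial of formal degree three in $\alpha$; the cubic coefficient $2[(p^TAp)(p^TBp)-(p^TAp)(p^TBp)]$ vanishes. Collecting the surviving $\alpha^2$, $\alpha^1$, $\alpha^0$ coefficients and matching them against the definitions in the statement gives $N'D-ND'=2\bigl(a(x,p)\alpha^2+b(x,p)\alpha+c(x,p)\bigr)$, so the stationary points of $\rho(x+\alpha p)$ are exactly the roots of the advertised quadratic.

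For the alternative forms on the right-hand sides, I would factor out $\|x\|_B^2$ or $\|p\|_B^2$ and substitute $p^TAp=\rho(p)\,p^TBp$ or $x^TAx=\rho(x)\,x^TBx$. For $b$ this yields $b=\|x\|_B^2\,p^T(A-\rho(x)B)p=\|x\|_B^2\|p\|_B^2(\rho(p)-\rho(x))\geq\|x\|_B^2\|p\|_B^2\delta>0$, using the hypothesis $\rho(p)-\rho(x)\geq\delta$. For $c$, the same style of calculation gives $c=\|x\|_B^2\,p^T(A-\rho(x)B)x$, and then by the gradient formula in \eqref{gradHessrho}, $c=\tfrac{1}{2}\|x\|_B^4\,\bigl(p,\nabla\rho(x)\bigr)<0$ by the descent assumption. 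The compact expression for $a$ comes from the parallel manipulation that isolates $\rho(p)$ from the first two terms.

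The delicate part is to identify $\alpha^*$ among the roots of the quadratic. I would split on the sign of $a$: if $a=0$, the equation is linear with unique positive root $-c/b>0$, which minimizes $\rho(x+\alpha p)$ because the derivative at $\alpha=0$ has the sign of $c<0$. If $a>0$, the root product $c/a<0$ forces exactly one positive root, and since the derivative at $\alpha=0$ is negative and positive for large $\alpha$, this positive root is the minimizer. The main obstacle is the case $a<0$, where both $c/a>0$ and $-b/a>0$ yield two positive roots $\alpha_1^*<\alpha_2^*$, and the derivative sign pattern negative-positive-negative on $[0,\infty)$ makes $\alpha_1^*$ a local min and $\alpha_2^*$ a local max. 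To rule out the global minimum escaping to $\alpha\to\infty$, I would combine $\rho(x+\alpha p)\to\rho(p)$ as $\alpha\to\infty$ with the strict inequalities $\rho(x+\alpha_1^* p)<\rho(x)<\rho(p)$ (the first from monotonic decrease on $(0,\alpha_1^*)$, the second from the spectral-gap assumption), which place the asymptotic value above the local min value and thus identify $\alpha_1^*$ as the global minimizer on $[0,\infty)$.
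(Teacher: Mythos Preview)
Your approach is essentially the paper's: differentiate the Rayleigh quotient via the quotient rule, note that the cubic coefficient cancels so the numerator of the derivative is the stated quadratic, verify the identities and signs for $a,b,c$, and split on the sign of $a$. The paper additionally records the Taylor expansion $\alpha^*=-\tfrac{c}{b}\bigl(1+\tfrac{ac}{b^2}+\cdots\bigr)$ in the $a\neq 0$ cases, which you omit; that expansion is used downstream in Lemma~\ref{rhodecrease} but is not needed for the present statement.

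There is one small gap in your $a<0$ case. From $c/a>0$ and $-b/a>0$ you infer only that the roots, \emph{if real}, are both positive; since here $ac>0$, the discriminant $b^2-4ac$ is not automatically positive, so the existence of two distinct real roots must be argued. The paper closes this by contradiction: if the quadratic had no real root or a repeated root, then $a\alpha^2+b\alpha+c\leq 0$ for all $\alpha$ (leading coefficient negative, no sign change), so $\rho(x+\alpha p)$ would be non-increasing on $[0,\infty)$ and $\rho(p)=\lim_{\alpha\to\infty}\rho(x+\alpha p)\leq\rho(x)$, contradicting $\rho(p)-\rho(x)\geq\delta>0$. Your own limit observation $\rho(x+\alpha p)\to\rho(p)>\rho(x)$ is exactly what is needed; you just have to invoke it once more, \emph{before} asserting the sign pattern, to force the two distinct roots. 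A minor wording point: the inequality $\rho(x)<\rho(p)$ comes directly from the hypothesis $\rho(p)-\rho(x)\geq\delta$, not from a spectral-gap assumption.
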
%\begin{proof} See Appendix. \end{proof}

%With the knowledge of the optimal step size $\alpha^*$ in a given descent direction $p$, we can estimate how much the Rayleigh quotient decreases from $x$ to $x+\alpha^* p$.

\begin{lem}\label{rhodecrease}
Let $x$ be an approximation to $v_1$ with decomposition \eqref{xdecomp}, $p$ be a descent direction for $\rho(x)$, and $\delta > 0$ be a constant independent of $\theta = \angle(x,v_1)_B$, such that $\rho(p) - \rho(x) \geq \delta > 0$, and $\alpha^*$ the optimal step size minimizing $\rho(x+\alpha p)$. Then for sufficiently small $\theta$, with $a,b$ and $c$ defined in \eqref{coefabc},
\begin{eqnarray}\nonumber
\frac{-\frac{c^2}{2b}-\frac{ac^3}{3b^3}+\mathcal{O}(c^4)}{\min\{\|x\|^2_B,\|x+\alpha^* p\|^2_B\}} &\leq& \rho(x+\alpha^* p) - \rho(x) \leq \frac{-\frac{c^2}{2b}-\frac{ac^3}{3b^3}+\mathcal{O}(c^4)}{\max\{\|x\|^2_B,\|x+\alpha^* p\|^2_B\}},
\end{eqnarray}
and hence $\rho(x+\alpha^*p)-\rho(x) = -\mathcal{O}(\sin^2\theta)\mathcal{O}\big(\cos^2\angle(p,\nabla \rho(x))\big)$.
\end{lem}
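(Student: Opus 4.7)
The plan is to compute $\rho(x+\alpha^*p) - \rho(x)$ by integrating the scalar function $g(s) := \rho(x+sp)$ from $0$ to $\alpha^*$, and then bound the result using extremal values of the denominator along the integration path.

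First I would derive the integrand in closed form. With $y(s)=x+sp$, the quotient rule gives
\[
g'(s) \;=\; \frac{2\bigl[(y^T A p)(y^T B y) - (y^T A y)(y^T B p)\bigr]}{(y^T B y)^2}.
\]
Expanding the bracket as a polynomial in $s$, the $(p^T A p)(p^T B p)$ cubic terms cancel and the remaining coefficients match the definitions of $a$, $b$, $c$ from Lemma~\ref{lemmaoptalpha} exactly, yielding the compact form $g'(s) = 2(as^2+bs+c)/\|y(s)\|_B^4$. As a by-product, the first-order optimality condition $g'(\alpha^*)=0$ reproduces the quadratic defining $\alpha^*$ in Lemma~\ref{lemmaoptalpha}.

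Since $b>0$, $c<0$, and $a(\alpha^*)^2 + b\alpha^* + c = 0$, the quadratic $as^2+bs+c$ is nonpositive on $[0,\alpha^*]$, so $g'(s) \le 0$ there. I would write $\rho(x+\alpha^*p) - \rho(x) = \int_0^{\alpha^*} g'(s)\,ds$ and pull $1/\|y(s)\|_B^4$ outside the integral by its extremal values on $[0,\alpha^*]$, the sign of the integrand dictating which extremum controls each bound. For $\theta$ sufficiently small, $\alpha^* = \mathcal{O}(|c|/b) = \mathcal{O}(\sin\theta)$ is small, so the convex quadratic $\|y(s)\|_B^2 = \|x\|_B^2 + 2s(p^T Bx) + s^2\|p\|_B^2$ is monotone on $[0,\alpha^*]$ (its vertex $s^* = -(p^T Bx)/\|p\|_B^2$ lies outside the interval), and its extrema reduce to the two endpoint values $\|x\|_B^2$ and $\|x+\alpha^*p\|_B^2$, producing the $\min$/$\max$ denominators in the statement.

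It remains to evaluate $\int_0^{\alpha^*}(as^2+bs+c)\,ds = a(\alpha^*)^3/3 + b(\alpha^*)^2/2 + c\alpha^*$ and substitute the Taylor expansion $\alpha^* = -c/b - ac^2/b^3 + \mathcal{O}(c^3)$, obtained by perturbing the defining quadratic around $c=0$; collecting powers of $c$ produces exactly the numerator $-c^2/(2b) - ac^3/(3b^3) + \mathcal{O}(c^4)$ in the statement. For the asymptotic form, I would write $c = \|x\|_B^2\, p^T(A-\rho(x)B)x$ and combine with $\nabla\rho(x) = 2(Ax-\rho(x)Bx)/\|x\|_B^2$ to obtain $|c| = \mathcal{O}\bigl(\|\nabla\rho(x)\|\,|\cos\angle(p,\nabla\rho(x))|\bigr)$. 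Using \eqref{eigresbd} ($\|\nabla\rho(x)\|=\mathcal{O}(\sin\theta)$) together with the lower bound $b \ge \|x\|_B^2\|p\|_B^2\delta > 0$ coming from the hypothesis $\rho(p) - \rho(x) \ge \delta$, we conclude $c^2/b = \mathcal{O}(\sin^2\theta)\,\mathcal{O}(\cos^2\angle(p,\nabla\rho(x)))$, which dominates the $ac^3/b^3$ and $\mathcal{O}(c^4)$ terms.

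The main technical obstacle will be the endpoint-reduction step. When $|p^T Bx|/\|p\|_B^2$ is bounded away from zero on the scale of $\alpha^*$, the vertex of $\|y(s)\|_B^2$ lies outside $[0,\alpha^*]$ and the endpoint values give the extrema directly. But for $p$ nearly $B$-orthogonal to $x$ the vertex can enter the interval, in which case one must verify that the interior correction to $\|y(s)\|_B^2$ at the vertex is of strictly higher order than $c^2/b$ and can be absorbed into the $\mathcal{O}(c^4)$ remainder; this is the part of the argument that genuinely uses ``sufficiently small $\theta$.''
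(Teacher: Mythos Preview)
Your proposal is correct and follows essentially the same route as the paper: integrate $\frac{d}{d\alpha}\rho(x+\alpha p)$ over $[0,\alpha^*]$, bound the positive denominator by its extremal values on that interval, evaluate the polynomial integral, and substitute the expansion of $\alpha^*$ from Lemma~\ref{lemmaoptalpha}. The paper dispatches the endpoint-reduction step more summarily---it invokes convexity of $\|x+\alpha p\|_B^2$ for the maximum and, for the minimum, simply notes that $\rho(p)-\rho(x)\geq\delta$ keeps $p$ bounded away from $x$ in direction so $\|x+\alpha^*p\|_B$ stays bounded away from zero---whereas your discussion of the vertex location is more careful but not strictly needed at the level of rigor the paper adopts.
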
%\begin{proof} See Appendix. \end{proof}
%%%%%%= 

\begin{lem}\label{xjxk_lemma1} Let $j \geq 0$ be fixed, and $k > j$ be a variable integer. Consider two iterates $x_j=v_1\cos\theta_j+f_j\sin\theta_j \in U_j$, $x_k=v_1\cos\theta_k + f_k\sin\theta_k \in U_k$ computed by Algorithm~\ref{alg:pl+1}, where $f_j,f_k \perp Bv_1$, $\|f_j\|_B=\|f_k\|_B=1$, $\|x_j\|_B=\|x_k\|_B=1$, with sufficiently small $\theta_j, \theta_k$ such that $\rho(x_k) \leq \rho(x_j) < \lambda_2$. Consider a decomposition 
\begin{eqnarray}\label{xjxkeqn}
x_k = \beta_{jk}x_j+\gamma_{jk}g_{jk},
\end{eqnarray}
with $\beta_{jk},\gamma_{jk}>0$, $\|g_{jk}\|_B=1$, $g_{jk} \in \mathrm{span}\{p_{j-1},g_j,\ldots,g_{k-1}\}$ $(j > 0)$ or $g_{0k} \in W_k=\mathrm{span}\{g_0,\ldots,g_{k-1}\}$ $(j=0)$. Under Assumption \ref{anglev1Wk}, let $\mu_{jk}=g_{jk}^TBx_j$, and we have%Assume that $g_{jk}$ satisfies $\rho(g_{jk}) -\rho(x_j) = d_{jk} \geq d > 0$ for all $k > j$. Then 
\begin{eqnarray}\label{decompxk}
\qquad \gamma_{jk} = \mathcal{O}(\sin\theta_j),\, \beta_{jk} = 1-\mu_{jk}\gamma_{jk}-\frac{1}{2}(1-\mu^2_{jk})\gamma^2_{jk}+\mathcal{O}(\gamma^4_{jk})=1-\mathcal{O}(\sin\theta_j).
\end{eqnarray}
\end{lem}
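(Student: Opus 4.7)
The first task is to verify that the decomposition exists with $g_{jk}$ in the claimed subspace. For $j>0$, the algorithm's update formula (Step 6) yields $x_{\ell+1} \in \mathrm{span}\{x_\ell, g_\ell, p_{\ell-1}\}$ for $\ell \geq 1$, and the conjugation step (Step 7) gives $p_\ell \in \mathrm{span}\{g_\ell, p_{\ell-1}\}$. A straightforward induction on $\ell = j{+}1, \ldots, k$ then shows $x_k \in \mathrm{span}\{x_j, p_{j-1}, g_j, g_{j+1}, \ldots, g_{k-1}\}$, so after choosing the sign of $g_{jk}$ to make $\gamma_{jk}\geq 0$, the decomposition \eqref{xjxkeqn} holds. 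For $j=0$, Lemma~\ref{lemmaWkxk} gives $x_k \in \mathrm{span}\{x_0\}+W_k$ immediately. Imposing $\|x_k\|_B^2 = 1$ and using $\|x_j\|_B = \|g_{jk}\|_B = 1$ with $\mu_{jk}=g_{jk}^TBx_j$ yields the quadratic constraint $\beta_{jk}^2 + 2\mu_{jk}\gamma_{jk}\beta_{jk} + \gamma_{jk}^2 = 1$; taking the positive root gives $\beta_{jk} = -\mu_{jk}\gamma_{jk} + \sqrt{1-(1-\mu_{jk}^2)\gamma_{jk}^2}$, and Taylor expansion of the square root in $\gamma_{jk}$ produces the stated asymptotic formula for $\beta_{jk}$, valid once $\gamma_{jk}$ is shown to be small.

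\textbf{Tying $\gamma_{jk}$ to the $B$-angle $\angle_B(x_k,x_j)$.} To control $\gamma_{jk}$, I isolate the $B$-orthogonal component of $x_k$ relative to $x_j$: the identity $x_k - (x_k^TBx_j)x_j = \gamma_{jk}(g_{jk} - \mu_{jk}x_j)$ has $B$-norm $\gamma_{jk}\sqrt{1-\mu_{jk}^2}$, which equals $\sin\angle_B(x_k,x_j)$ since both $x_j,x_k$ are $B$-unit. The $B$-angle triangle inequality yields $\angle_B(x_k,x_j) \leq \angle_B(x_k,v_1)+\angle_B(v_1,x_j) = \theta_k+\theta_j$. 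The hypothesis $\rho(x_k)\leq \rho(x_j)<\lambda_2$ combined with \eqref{rqerror} gives $\sin^2\theta_k(\rho(f_k)-\lambda_1) \leq \sin^2\theta_j(\rho(f_j)-\lambda_1)$; since $\rho(f_j),\rho(f_k)\in[\lambda_2,\lambda_n]$, this forces $\sin\theta_k \leq \sqrt{(\lambda_n-\lambda_1)/(\lambda_2-\lambda_1)}\,\sin\theta_j$. Hence $\gamma_{jk}\sqrt{1-\mu_{jk}^2} = \mathcal{O}(\sin\theta_j)$.

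\textbf{Bounding $\mu_{jk}$ away from $1$ and concluding.} The main obstacle is a uniform-in-$\theta_0$ positive lower bound on $\sqrt{1-\mu_{jk}^2}$, which is exactly what Assumption~\ref{anglev1Wk} supplies: since $g_{jk}\in W_k$ with $\|g_{jk}\|_B=1$, we have $|v_1^TBg_{jk}|\leq\cos\delta<1$. Decomposing $x_j = v_1\cos\theta_j + f_j\sin\theta_j$ with $\|f_j\|_B=1$ and applying the Cauchy--Schwarz inequality in the $B$-inner product gives $|\mu_{jk}| \leq \cos\delta\cdot\cos\theta_j + \sin\theta_j$, which is uniformly at most $(1+\cos\delta)/2<1$ once, say, $\sin\theta_j \leq (1-\cos\delta)/2$. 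Therefore $\sqrt{1-\mu_{jk}^2}$ admits a strict positive lower bound, dividing through yields $\gamma_{jk} = \mathcal{O}(\sin\theta_j)$, and substituting into the expansion of the first paragraph gives $\beta_{jk} = 1 - \mu_{jk}\gamma_{jk} - \tfrac{1}{2}(1-\mu_{jk}^2)\gamma_{jk}^2 + \mathcal{O}(\gamma_{jk}^4) = 1 - \mathcal{O}(\sin\theta_j)$, in particular positive for small enough $\theta_j$. This completes the sketch.
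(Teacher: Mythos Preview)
Your proof is correct but follows a genuinely different route from the paper's. The paper expands the Rayleigh quotient through the decomposition to obtain
\[
\rho(x_k)-\rho(x_j)=2(g_{jk}^Tr_j)\gamma_{jk}\beta_{jk}+d_{jk}\gamma_{jk}^2,\qquad d_{jk}=\rho(g_{jk})-\rho(x_j),
\]
then invokes Assumption~\ref{assumptionlincvg} to sandwich the left-hand side between constant multiples of $\sin^2\theta_j$, uses Assumption~\ref{anglev1Wk} to ensure $d_{jk}\geq d>0$, and reads off $\gamma_{jk}/\sin\theta_j=\mathcal{O}(1)$ from the resulting quadratic inequality. Your argument is purely geometric: you identify $\gamma_{jk}\sqrt{1-\mu_{jk}^2}$ with $\sin\angle_B(x_k,x_j)$, bound the latter by $\sin(\theta_j+\theta_k)=\mathcal{O}(\sin\theta_j)$ via the triangle inequality and the monotonicity $\rho(x_k)\leq\rho(x_j)$, and then use Assumption~\ref{anglev1Wk} to keep $|\mu_{jk}|$ away from $1$. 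This is more elementary and, notably, does not call on the linear-convergence Assumption~\ref{assumptionlincvg} at all---only the hypothesis $\rho(x_k)\leq\rho(x_j)$ already present in the lemma. What the paper's route buys in exchange is the two-sided estimate: from the \emph{lower} bound in Assumption~\ref{assumptionlincvg} they also get $\gamma_{jk}\neq o(\sin\theta_j)$ (stated in a footnote and used later when $\gamma_{jk}$ appears in denominators), which your angle argument does not deliver without further work. For the lemma as stated, your approach is entirely sufficient.
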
%\begin{proof} See Appendix. \end{proof}

\subsubsection{Main theorems}
We are ready to prove the global quasi-optimality of Algorithm~\ref{alg:pl+1}. To this end, we will show the following results step by step.
\begin{enumerate}
\item If the search directions $\{p_0,\ldots,p_{k}\}$ of Algorithm~\ref{alg:pl+1} are approximately conjugate, then $x_{k+1} \in U_{k+1}$ is sufficiently close to the global minimizer in $U_{k+1}$ as long as $x_k \in U_k$ is sufficiently close to the global minimizer in $U_k$;
\item Any two consecutive search directions $p_{k-1}$ and $p_k$ are approximately conjugate, and so are $p_0$ and $p_2$ (hence, $x_3$ is globally quasi-optimal);
\item If $x_k$ is globally quasi-optimal, then $r_k$ is nearly orthogonal to $W_k$; in fact, $\lim_{\theta_0\rightarrow 0} \cos\angle(r_k,W_k) = 0$ (we can hence define $\cos\angle(r_k,W_k)\big|_{\theta_0 = 0} = 0$);
\item Assume that $\cos \angle(r_k,W_k)$ is differentiable at $\theta_0=0$. If $x_k \in U_k$ is globally quasi-optimal, then $p_k$ is approximately conjugate to $\{p_0,\ldots,p_{k-1}\}$, and hence $x_{k+1} \in U_{k+1}$ is also  globally quasi-optimal, as a result of induction. 
\end{enumerate}

First, we show that a set of approximately conjugate search directions guarantee that the quality of the iterate of Algorithm~\ref{alg:pl+1} at (outer) step $k$ for approximating the corresponding global minimizer can be extended to step $k+1$ with a possible very small deterioration on the order of $\mathcal{O}(\sin\theta_{k+1})\mathcal{O}(\sin^2\theta_0)$. 
\begin{thm}\label{conjoptext}
Let $\{x_k\}$ be the iterates of Algorithm~\ref{alg:pl+1}, $r_k = (A-\rho(x_k)B)x_k$ the residual, and $\{p_k\}$ the $B$-normalized search directions. For a given $k \geq 0$, consider all vectors of the form $z= y+\alpha p_k \in U_{k+1}$, where $y=\beta_y x_0 +\gamma_y g_y \in U_k$ with $\|y\|_B = \|g_y\|_B=1$, and $g_y \in W_k$, satisfying $\rho(z) \leq \rho(y) \leq \rho(x_0)$. Assume that $\{p_k\}$ are pairwise approximately conjugate, i.e., $p_k^T(A-\rho(x_k)B)p_j = \mathcal{O}(\sin\theta_j)$ $(0 \leq j < k)$. Let $y^*$ and $z^*$ be the global minimizer of $\rho(\cdot)$ in $U_k$ and $U_{k+1}$, respectively. Then $$\rho(x_{k+1})-\rho(z^*) \leq \rho(x_k)-\rho(y^*)+\mathcal{O}(\sin\theta_{k+1})\mathcal{O}(\sin^2\theta_0).$$
\end{thm}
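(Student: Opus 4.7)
\medskip

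\noindent\textbf{Proof plan.} Since $U_{k+1}=U_k+\mathrm{span}\{p_k\}$, every vector in $U_{k+1}$ admits a decomposition $z=y+\alpha p_k$ with $y\in U_k$. The plan is to upper bound $\rho(x_{k+1})$ by a one-dimensional line search from $x_k$ along $p_k$, lower bound $\rho(z^*)$ by a one-dimensional line search from $y^*$ along $p_k$ up to a small defect, and then match the two closed-form expansions via Lemma \ref{rhodecrease}. The approximate conjugacy assumption is precisely what powers the second step: it guarantees that enlarging the search space from $U_k$ to $U_{k+1}$ yields, to the relevant order, no more improvement than the best single step in the new direction $p_k$ starting from the in-$U_k$ minimizer $y^*$.

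For the upper bound, observe that $p_k\in\mathrm{span}\{g_k,p_{k-1}\}\subseteq\mathrm{range}(Q_k)$, so the whole line $\{x_k+\alpha p_k\}$ lies inside $\mathrm{range}(Q_k)$. The Ritz minimization therefore gives $\rho(x_{k+1})\le\rho(x_k+\alpha^\sharp p_k)$, where $\alpha^\sharp$ is the optimal step size at $x_k$. Lemma \ref{rhodecrease} applied at $w=x_k$ produces
\begin{equation*}
\rho(x_{k+1})-\rho(x_k)\le-\tfrac{c(x_k,p_k)^2}{2b(x_k,p_k)}+\mathcal{O}\bigl(c(x_k,p_k)^3\bigr),
\end{equation*}
with $a,b,c$ as in \eqref{coefabc}, and the analogous expansion holds at $w=y^*$ along its own optimal step $\alpha^\star$.

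The core and most delicate step is the lower bound $\rho(z^*)\ge\rho(y^*+\alpha^\star p_k)-\mathcal{O}(\sin\theta_{k+1})\mathcal{O}(\sin^2\theta_0)$. Writing a general $z\in U_{k+1}$ as $y+\alpha p_k$ and expanding $\rho(z)$ to second order around the base point $(y^*,\alpha^\star)$, the linear terms in $y-y^*$ vanish by the optimality identity $(A-\rho(y^*)B)y^*\perp U_k$, leaving a quadratic form whose off-diagonal block is generated by the couplings $p_k^T(A-\rho(y^*)B)p_j$ for $p_j\in U_k$. After correcting the shift by $\rho(x_k)-\rho(y^*)=\mathcal{O}(\sin^2\theta_0)$, which is permitted by \eqref{rqerror} and \eqref{lincvgfactor}, the hypothesis bounds each such coupling by $\mathcal{O}(\sin\theta_j)$, and geometric summation via Assumption \ref{assumptionlincvg} bounds the entire off-diagonal block by $\mathcal{O}(\sin\theta_0)$. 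A Schur-complement analysis of the $(y,\alpha)$ optimization then shows that the gap $\rho(y^*+\alpha^\star p_k)-\rho(z^*)$ is proportional to the off-diagonal block norm times $|\alpha^\star|\cdot\|\bar y-y^*\|_B$, with the latter two factors both at the natural scale $\mathcal{O}(\sin\theta_{k+1})$ inherited from the convergence rate of Algorithm \ref{alg:pl+1} and Lemma \ref{xjxk_lemma1}; together these yield the claimed defect.

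Subtracting the two expansions finally gives
\begin{equation*}
\rho(x_{k+1})-\rho(z^*)\le[\rho(x_k)-\rho(y^*)]+\Bigl[\tfrac{c(y^*,p_k)^2}{2b(y^*,p_k)}-\tfrac{c(x_k,p_k)^2}{2b(x_k,p_k)}\Bigr]+\mathcal{O}(\sin\theta_{k+1})\mathcal{O}(\sin^2\theta_0).
\end{equation*}
Taylor-expanding $w\mapsto c(w,p_k)$ and $w\mapsto b(w,p_k)$ around $v_1$, and using $\|x_k-y^*\|_B=\mathcal{O}(\sin\theta_0)$ together with $c(w,p_k)=\mathcal{O}(\sin\theta_{k+1})$ for $w\in\{x_k,y^*\}$, absorbs the bracketed difference into the same defect order. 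The principal obstacle throughout is the third paragraph's decoupling argument: pairwise conjugacy defects of size $\mathcal{O}(\sin\theta_j)$ must be converted into a block-off-diagonal estimate that does not accumulate with $k$, which requires careful use of the optimality identity $r_{y^*}\perp U_k$ to kill leading contributions, and Assumption \ref{assumptionlincvg} to sum the resulting geometric series in $\sin\theta_j$.
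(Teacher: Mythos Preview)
Your plan---compare one-dimensional line searches from $x_k$ and from $y^*$ along $p_k$---is a different route from the paper, which instead proves the near-identity $p_k^T(A-\rho(y)B)y=p_k^T(A-\rho(x_k)B)x_k+\mathcal{O}(\sin^2\theta_0)$ for \emph{every} admissible $y\in U_k$. This is obtained by writing both $y$ and $x_k$ as $\beta x_0+\gamma g$ with $g\in W_k$ and $\gamma=\mathcal{O}(\sin\theta_0)$ (Lemma~\ref{xjxk_lemma1}), subtracting the two residuals, and applying the conjugacy hypothesis to the $W_k$-parts $g_y,g_{0k}$. A second-order Taylor expansion then yields $\rho(y+\alpha p_k)=\rho(x_k+\alpha p_k)+(\rho(y)-\rho(x_k))+\mathcal{O}(\alpha\sin^2\theta_0)$; plugging in the decomposition $z^*=y(z^*)+\alpha(z^*)p_k$ with $|\alpha(z^*)|=\mathcal{O}(\sin\theta_{k+1})$, and using $\rho(x_k+\alpha(z^*)p_k)\ge\rho(x_k+\alpha_k^*p_k)\ge\rho(x_{k+1})$ together with $\rho(y(z^*))\ge\rho(y^*)$, finishes the proof---without ever having to compare $c(x_k,p_k)$ with $c(y^*,p_k)$.

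Your argument has two concrete gaps. In the lower-bound paragraph, the linear terms in $y-y^*$ do \emph{not} vanish when you expand around $(y^*,\alpha^\star)$: the relevant gradient is $\nabla\rho(y^*+\alpha^\star p_k)$, not $\nabla\rho(y^*)$, and only the latter is orthogonal to $U_k$. More seriously, your final step is one power of $\sin\theta_0$ short. Taylor-expanding $c(w,p_k)=p_k^T(A-\rho(w)B)w$ around $v_1$ gives gradient $(A-\lambda_1 B)p_k$, which is order one; so $\|x_k-y^*\|_B=\mathcal{O}(\sin\theta_0)$ by itself yields only $c(x_k,p_k)-c(y^*,p_k)=\mathcal{O}(\sin\theta_0)$ and hence $[c^2/(2b)]_{y^*}-[c^2/(2b)]_{x_k}=\mathcal{O}(\sin\theta_k\sin\theta_0)$, not the required $\mathcal{O}(\sin\theta_{k+1}\sin^2\theta_0)$. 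Closing this gap needs precisely the paper's key step: conjugacy applied to the structured difference $x_k-y^*\in\mathrm{span}\{x_0\}+W_k$, which gains the extra $\sin\theta_0$ on the $W_k$-component (via $p_k^T(A-\rho(x_k)B)g=\mathcal{O}(\sin\theta_0)$ for $g\in W_k$, multiplied by $\gamma=\mathcal{O}(\sin\theta_0)$) and on the $x_0$-component (via $\|r_0\|=\mathcal{O}(\sin\theta_0)$). You invoke conjugacy only in the Schur-complement paragraph, not here, so as written the bracketed term cannot be absorbed.
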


To prove the global quasi-optimality of Algorithm~\ref{alg:pl+1}, it is hence crucial to show that the $B$-normalized search directions are pairwise approximately conjugate, i.e.,  $$p_k^T(A-\rho(x_k)B)p_j = \mathcal{O}(\sin\theta_j)$$ for all integers $0 \leq j < k$. To achieve this, our second step is to show that any two consecutive search directions $p_{k-1}$ and $p_k$ are approximately conjugate, and so are $p_0$ and $p_2$. We will establish the complete near conjugacy in Theorem \ref{thminductive}. 

\begin{lem}\label{apprxconjkm1k}
The $B$-normalized search directions of Algorithm~\ref{alg:pl+1} satisfy $$p_k^T(A-\rho(x_k)B)p_{k-1} = \mathcal{O}(\sin^2\theta_{k-1})\:\:\:\:\mbox{for all }\:k \geq 1.$$
\end{lem}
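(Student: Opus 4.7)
The proof will exploit the explicit conjugation built into step 7 of Algorithm~\ref{alg:pl+1}. By construction,
\[
\widetilde{p}_k = g_k - \frac{p_{k-1}^T(A-\rho_{k-1}B)g_k}{p_{k-1}^T(A-\rho_{k-1}B)p_{k-1}}\,p_{k-1},
\]
so $\widetilde{p}_k$, and hence the $B$-normalized $p_k$, is exactly $(A-\rho_{k-1}B)$-conjugate to $p_{k-1}$: $p_k^T(A-\rho_{k-1}B)p_{k-1}=0$. The quantity we want to control uses the shift $\rho(x_k)=\rho_k$ instead of $\rho_{k-1}$, and the plan is simply to absorb the shift discrepancy.

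Writing $A-\rho_k B = (A-\rho_{k-1}B)+(\rho_{k-1}-\rho_k)B$ and using the conjugacy above,
\[
p_k^T(A-\rho_k B)p_{k-1} = (\rho_{k-1}-\rho_k)\, p_k^T B p_{k-1}.
\]
Since $\|p_k\|_B=\|p_{k-1}\|_B=1$, the Cauchy--Schwarz inequality in the $B$-inner product bounds $|p_k^T B p_{k-1}|\leq 1$. Hence everything reduces to estimating $|\rho_{k-1}-\rho_k|$.

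For this I would invoke the Rayleigh-quotient error identity \eqref{rqerror}: $\rho_{k-1}-\lambda_1 = \sin^2\theta_{k-1}(\rho(f_{k-1})-\lambda_1)$ with $\rho(f_{k-1})\in[\lambda_2,\lambda_n]$, giving $\rho_{k-1}-\lambda_1=\mathcal{O}(\sin^2\theta_{k-1})$. Because Algorithm~\ref{alg:pl+1} performs an RR projection on a subspace containing $x_k$ at the beginning of iteration $k$ and thus $\lambda_1\leq\rho_k\leq\rho_{k-1}$, we get
\[
0\leq \rho_{k-1}-\rho_k\leq \rho_{k-1}-\lambda_1 = \mathcal{O}(\sin^2\theta_{k-1}).
\]
Combining this with the bound $|p_k^TBp_{k-1}|\leq 1$ yields $p_k^T(A-\rho_k B)p_{k-1}=\mathcal{O}(\sin^2\theta_{k-1})$, as desired.

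I do not anticipate a real obstacle here: the entire argument is a one-line algebraic manipulation together with the already-established quadratic Rayleigh-quotient bound. The only point requiring mild care is confirming the monotonicity $\rho_k\leq\rho_{k-1}$, which follows because $x_k$ lies in the trial subspace used to compute $x_{k+1}$ via RR (so the optimal Ritz value cannot exceed $\rho(x_k)$); this is the standard reason the outer iterates are a decreasing sequence bounded below by $\lambda_1$.
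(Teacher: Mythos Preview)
Your proposal is correct and follows essentially the same approach as the paper: use the built-in exact $(A-\rho_{k-1}B)$-conjugacy from step~7, split $A-\rho_kB=(A-\rho_{k-1}B)+(\rho_{k-1}-\rho_k)B$, and bound the shift discrepancy $\rho_{k-1}-\rho_k\le\rho_{k-1}-\lambda_1=\mathcal{O}(\sin^2\theta_{k-1})$ together with $|p_k^TBp_{k-1}|\le 1$. The only cosmetic difference is that the paper phrases the bound on $\rho_{k-1}-\rho_k$ via the linear-convergence Assumption~\ref{assumptionlincvg} (giving the factor $1-\bar{\xi}$), whereas you use the simpler monotonicity $\lambda_1\le\rho_k\le\rho_{k-1}$, which suffices.
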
%\begin{proof} See Appendix. \end{proof}

%We note that the above conjugate relation remains valid if the shift $\rho(x_k)$ is replaced with $\rho(x_{k-1})$. In fact 
%\begin{eqnarray}\label{shiftedconjkkm1}
%\hspace{-0.3in}p_k^T(A-\rho(x_{k-1})B)p_{k-1} &=& p_k^T(A-\rho(x_k)B)p_{k-1}+\left(\rho(x_k)-\rho(x_{k-1})\right)p_k^TBp_{k-1} \\ \nonumber
%&=&\mathcal{O}(\sin\theta_{k-1})+\mathcal{O}(\sin^2\theta_{k-1})( p_k, p_{k-1} )_B = \mathcal{O}(\sin\theta_{k-1}).
%\end{eqnarray}

To show the complete near conjugacy $p_k^T(A-\rho(x_k)B)p_j = \mathcal{O}(\sin\theta_j)$ for all $0 \leq j < k$, we make an assumption about $g_k \in M\mathcal{K}_m\left((A-\rho_k B)M,r_k\right)$ as follows.

\begin{aspt}\label{samepoly}
Let $g_k\!=\! Mp^{(k)}_{m-1}\!\left((A-\rho_k B)M\right)r_k$ at step 6 of Algorithm~\ref{alg:pl+1}, where $p^{(k)}_{m-1}$ is a polynomial of degree no greater than $m\!-\!1$ with real coefficients. With a sufficiently small $\theta_0 = \angle(x_0,v_1)_B$, assume that for all $0 \leq j < k$, \vspace{-0.1in}
\begin{eqnarray}\nonumber
Mp^{(k)}_{m-1}\left((A-\rho_k B)M\right) = Mp^{(j)}_{m-1}\left((A-\rho_j B)M\right)+\mathcal{O}(\sin\theta_j).\vspace{-0.1in}
\end{eqnarray}
\end{aspt}

{\bf Remark.} Assumption \ref{samepoly} holds trivially for LOPCG, i.e., Algorithm~\ref{alg:pl+1} with $m=1$, because $p^{(k)}_{m-1}(\cdot) = I$ for all $k$, so that $g_k = Mr_k$ up to a scaling factor. 

\begin{lem}\label{apprxconj02} 
Under Assumptions \ref{assumptionlincvg} and \ref{samepoly}, $p_2^T(A-\rho(x_2)B)p_0 = \mathcal{O}(\sin\theta_0)$. 
\end{lem}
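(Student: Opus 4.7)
The strategy is to show $\widetilde{p}_2^T(A-\rho_2 B) p_0 = \mathcal{O}(\sin^2\theta_0)$; combining this with $\|\widetilde{p}_2\|_B = \Theta(\sin\theta_0)$ (valid because both $g_2$ and $\alpha_2 p_1$ have $B$-norm $\Theta(\sin\theta_0)$ under Assumption~\ref{assumptionlincvg}) yields $p_2^T(A-\rho_2 B) p_0 = \mathcal{O}(\sin\theta_0)$. Writing $\widetilde{p}_2 = g_2 - \alpha_2 p_1$, the cross term $\alpha_2 p_1^T(A-\rho_2 B) p_0$ is routinely $\mathcal{O}(\sin^3\theta_0)$: $\alpha_2 = \mathcal{O}(\sin\theta_0)$ from its defining formula (using $\|g_2\|_B = \mathcal{O}(\sin\theta_0)$ and $p_1^T(A-\rho_1 B)p_1 \geq \delta > 0$), while $p_1^T(A-\rho_2 B) p_0 = p_1^T(A-\rho_1 B) p_0 + (\rho_1-\rho_2)p_1^T B p_0 = \mathcal{O}(\sin^2\theta_0)$ by Lemma~\ref{apprxconjkm1k} and $|\rho_1-\rho_2| = \mathcal{O}(\sin^2\theta_0)$.

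The main task is to bound $g_2^T(A-\rho_2 B) p_0 = r_2^T \widetilde{P}_2 p_0$, where $P_2 = Mp^{(2)}_{m-1}((A-\rho_2 B)M)$ is symmetric and $\widetilde{P}_2 = P_2(A-\rho_2 B)$. Assumption~\ref{samepoly} together with $|\rho_0-\rho_2| = \mathcal{O}(\sin^2\theta_0)$ yields $\widetilde{P}_2 = \widetilde{P}_0 + \mathcal{O}(\sin\theta_0)$; since $\|r_2\|_2 = \mathcal{O}(\sin\theta_0)$, the problem reduces to showing $r_2^T \widetilde{P}_0 p_0 = \mathcal{O}(\sin^2\theta_0)$, where $\widetilde{P}_0 = \pi(T)$ with $T = M(A-\rho_0 B)$ and $\pi(t) = t p^{(0)}_{m-1}(t)$. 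The key tool is the RR optimality at cycle 1, which gives $r_2 \perp \mathrm{range}(Q_1) = \mathrm{span}\{x_1, M(A-\rho_1 B) x_1, \ldots, [M(A-\rho_1 B)]^m x_1, p_0\}$.

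The crux is a polynomial identity arising from the cycle-0 RR. From $\widetilde{x}_1 = Q_0 w_0 = w_0(1) x_0 + g_0$ (with $g_0 = \pi(T) x_0$) and $x_1 = \kappa \widetilde{x}_1$ where $\kappa = 1/\|\widetilde{x}_1\|_B = \Theta(1)$, writing $x_1 = R(T) x_0$ with $R(t) = \kappa(w_0(1) + \pi(t))$ forces $\pi(t) = R(t)/\kappa - w_0(1)$. Squaring, $\pi(T)^2 x_0 = (1/\kappa^2) R(T)^2 x_0 - (2w_0(1)/\kappa)\, x_1 + w_0(1)^2\, x_0$. Since $R$ has degree $\leq m$, $R(T)^2 x_0 = R(T) x_1 \in \mathrm{span}\{x_1, Tx_1, \ldots, T^m x_1\}$; and solving $x_1 = \kappa(w_0(1) x_0 + \|g_0\|_B p_0)$ for $x_0$ gives $x_0 = (x_1 - \kappa\|g_0\|_B p_0)/(\kappa w_0(1)) \in \mathrm{span}\{x_1, p_0\}$. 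Hence $\pi(T)^2 x_0 \in \mathrm{span}\{x_1, Tx_1, \ldots, T^m x_1, p_0\}$, with coefficients of order $\mathcal{O}(1)$. Under Assumption~\ref{samepoly} and $|\rho_0-\rho_1| = \mathcal{O}(\sin^2\theta_0)$, each $T^i x_1$ differs from $[M(A-\rho_1 B)]^i x_1$ by $\mathcal{O}(\sin^2\theta_0)$, so $\pi(T)^2 x_0 = v' + \epsilon$ with $v' \in \mathrm{range}(Q_1)$ and $\|\epsilon\|_2 = \mathcal{O}(\sin^2\theta_0)$. RR orthogonality yields $r_2^T \pi(T)^2 x_0 = r_2^T \epsilon = \mathcal{O}(\sin^3\theta_0)$; dividing by $\|g_0\|_B = \Theta(\sin\theta_0)$ produces $r_2^T \widetilde{P}_0 p_0= r_2^T\pi(T)^2 x_0/\|g_0\|_B = \mathcal{O}(\sin^2\theta_0)$, as required.

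The main obstacle is uncovering and exploiting the polynomial identity $\pi(t) = R(t)/\kappa - w_0(1)$, which ties the search direction $g_0$ to the Ritz iterate $x_1$ and shows that $\pi(T)^2 x_0$ nearly lives in the cycle-1 search space (so that $r_2$'s Euclidean orthogonality reduces the inner product by an extra factor of $\sin^2\theta_0$). A secondary technical point is ensuring $\kappa$ and $w_0(1)$ stay bounded away from zero as $\theta_0 \to 0$; this is intuitive because the Ritz vector $x_1$ remains close to $x_0$ for small $\theta_0$, and can be justified via Lemma~\ref{xjxk_lemma1} applied with $j=0$, $k=1$.
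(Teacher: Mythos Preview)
Your argument is essentially correct and reaches the right conclusion, but it takes a noticeably different route from the paper. The paper expresses $p_0 = \gamma_{01}^{-1}(x_1 - \beta_{01}x_0)$ and computes $p_0^T(A-\rho_0 B)g_2$ as a combination of $r_0^Tg_2$ and $r_1^Tg_2$; it then uses the symmetry of $P_j = Mp^{(j)}_{m-1}((A-\rho_jB)M)$ together with Assumption~\ref{samepoly} to ``transpose'' these into $g_0^Tr_2$ and $g_1^Tr_2$, both of which vanish by the RR orthogonality $r_2 \perp \mathrm{range}(Q_1)$. Your approach instead establishes the structural fact that $\pi(T)^2x_0$ lies in $\mathrm{range}(Q_1)$ up to $\mathcal{O}(\sin^2\theta_0)$, via the polynomial identity $\pi = R/\kappa - w_0(1)$ linking $g_0$ to $x_1$. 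Both exploit $r_2 \perp \mathrm{range}(Q_1)$, but the paper's symmetry trick is shorter, avoids tracking the size of $\|g_0\|_B$, and (importantly) is exactly the mechanism reused in Theorem~\ref{thminductive} to handle all pairs $p_\ell,p_k$; your polynomial identity is special to the cycle-$0$ structure and would not extend as readily.

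One point to tighten: your claim that $\|\widetilde{p}_2\|_B = \Theta(\sin\theta_0)$ does not follow merely from both summands having $B$-norm $\Theta(\sin\theta_0)$, since they could partially cancel; and the assertion $\|g_k\|_B = \Theta(\sin\theta_0)$ depends on how the primitive Ritz vector $w_k$ is normalized (the algorithm only $B$-normalizes the columns of $G_k$, not $Q_k$ as a whole). The paper sidesteps this by writing $p_2 = \frac{\eta_2}{\|g_2\|}\widetilde{p}_2$ with $\eta_2 = \mathcal{O}(1)$ and carrying $\|g_2\|$ as an explicit scale factor throughout, which is cleaner than committing to a particular size for $\|g_k\|_B$.
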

%\begin{proof} See Appendix. \end{proof}
%The approximate conjugate relation also holds if $\rho(x_k)$ is replaced with $\rho(x_{k-1})$. In fact,
%\begin{eqnarray}
%p_{k-1}^T(A-\rho(x_{k-1})B)p_k &=& p_{k-1}^T(A-\rho(x_{k})B)p_k + [\rho(x_k)-\rho(x_{k-1})]p_{k-1}^TBp_k \\ \nonumber
%&=& \mathcal{O}(\sin\theta_k)+\mathcal{O}(\sin^2\theta_{k-1})(p_{k-1},p_k)_B = \mathcal{O}(\sin\theta_{k-1})
%\end{eqnarray}

%The main observation here is that $p_2$ is a particular linear combination of $Mr_2$ and $p_1$, such that it is approximately conjugate to $p_0$. In summary, LOPCG generates the global minimizer $x_2$ in $\beta x_0 + \mathrm{span}\{Mr_0,Mr_1\}$, and then constructs the new search direction $p_2$ satisfying the approximate conjugate conditions \eqref{p2p1} and \eqref{p2p0}. Our aim here is to show that the next iterate $x_3$ found by LOPCG is very close to the global minimizer $x_3$ in  $\beta x_0 + \mathrm{span}\{Mr_0,Mr_1, Mr_2\}$.

%Since LOPCG's iterate $x_2$ is the global minimizer in $W_2$, we have $\rho(x_3)-\rho(z^*) \leq \mathcal{O}(\sin^3\theta_0)$, where $z^*$ is the global minimizer in $W_3$. 

From Lemmas \ref{apprxconjkm1k} and \ref{apprxconj02}, the search directions $\{p_0,p_1,p_2\}$ are pairwise approximately conjugate. In addition, since $x_2 \in \mathrm{span}\{x_1,g_1,p_0\}=\mathrm{span}\{x_0,p_0,p_1\}=U_2$ is obtained from the RR projection, it is the global minimizer in $U_2$. Let $z^*$ be the global minimizer of $\rho(\cdot)$ in $U_3$. It follows from Theorem \ref{conjoptext} that 
\begin{eqnarray}\label{basecase}
\qquad \rho(x_3)-\rho(z^*) \leq \rho(x_2)-\rho(y^*)+\mathcal{O}(\sin\theta_3)\mathcal{O}(\sin^2\theta_0) = \mathcal{O}(\sin\theta_3)\mathcal{O}(\sin^2\theta_0),\vspace{-0.1in}
\end{eqnarray}
where $x_2 = y^*$ is the global minimizer in $U_2$. We thus have the \emph{base case}: $x_3$ is a global quasi-minimizer in $U_3$. 

The rest of our work is focused on the \emph{inductive step}: assuming that the global quasi-optimality is achieved at $x_{k}$, we want to show that the new $p_k$ is approximately conjugate to $\{p_0,\ldots,p_{k-1}\}$, such that the quasi-optimality can be extended to $x_{k+1}$. %To accomplish this, we need the following approximate orthogonality condition. 

\begin{lem}\label{qopteqvorth}
For a given $k \geq 3$, assume that the iterate $x_k$ of Algorithm~\ref{alg:pl+1} achieves the {global quasi-optimality} (Definition \ref{defnglbopt}).
%; that is
%\begin{equation}\label{glbqopt}
%\lim_{\theta_0 \rightarrow 0} \frac{\rho(x_k)-\rho(y^*)}{\rho(x_k)-\lambda_1} = 0,
%\end{equation}
%where $y^*$ is the global minimizer of $\rho(\cdot)$ in $U_k$. 
Then $r_k=(A-\rho(x_k)B)x_k$ satisfies \begin{equation}\label{nearorth}
\lim_{\theta_0 \rightarrow 0}\cos \angle(r_k, W_k) = 0,
\end{equation}
i.e., $r_k$ is asymptotically orthogonal to $W_k = \mathrm{span}\{p_0,\ldots,p_{k-1}\}$. 
\end{lem}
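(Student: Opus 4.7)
The plan is to argue by contradiction: assume that $\cos\angle(r_k,W_k)$ does not tend to $0$ as $\theta_0\to 0$, and then exhibit an element of $U_k$ whose Rayleigh quotient beats $\rho(x_k)$ by enough to violate the global quasi-optimality in Definition \ref{defnglbopt}. Concretely, suppose there exist $\epsilon>0$ and a sequence of starting angles $\theta_0\to 0$ along which $\cos\angle(r_k,W_k)\geq\epsilon$. For each such $\theta_0$, pick a $B$-normalized direction $\hat p\in W_k$, flipping sign if necessary, so that $\hat p^T r_k<0$ and $|\hat p^T r_k|\geq \epsilon\,\|\hat p\|_2\,\|r_k\|_2$. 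Since $\nabla\tfrac12\rho(x_k)=r_k/\|x_k\|_B^2$, this $\hat p$ is a descent direction, and the line $x_k+\alpha\hat p$ stays inside $U_k=\mathrm{span}\{x_0\}+W_k$.

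Next I would verify that Lemma \ref{rhodecrease} applies with constants uniform in $\theta_0$. Writing $\hat p=\alpha v_1+\beta g$ with $g\perp_B v_1$ and $\|g\|_B=1$, Assumption \ref{anglev1Wk} forces $\beta^2\geq\sin^2\delta>0$ independently of $\theta_0$; hence $\rho(\hat p)\geq \lambda_1+\sin^2\delta\,(\lambda_2-\lambda_1)$, and since $\rho(x_k)\to\lambda_1$, the gap $b(x_k,\hat p)=\rho(\hat p)-\rho(x_k)$ is bounded below by a positive constant for all sufficiently small $\theta_0$. The eigenresidual estimate \eqref{eigresbd}, together with the fact that $\|Af_k-\rho(x_k)Bf_k\|$ stays bounded below (because $f_k$ lies in the span of $v_2,\ldots,v_n$ and $\rho(x_k)\to\lambda_1<\lambda_2$), gives $|c(x_k,\hat p)|=|\hat p^T r_k|=\Theta(\sin\theta_k)$, so $c^2/(2b)\geq C\,\epsilon^2\sin^2\theta_k$ for some constant $C>0$ independent of $\theta_0$.

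Plugging this into Lemma \ref{rhodecrease} yields $\rho(x_k+\alpha^*\hat p)-\rho(x_k)\leq -C'\epsilon^2\sin^2\theta_k$ for some $C'>0$, the $\mathcal{O}(c^3)$ remainder and the $B$-norm denominators contributing only lower-order or bounded multiplicative factors. Since $x_k+\alpha^*\hat p\in U_k$, the global minimizer satisfies $\rho(y_k^*)\leq \rho(x_k)-C'\epsilon^2\sin^2\theta_k$. Combining this with \eqref{rqerror}, which supplies $\rho(x_k)-\lambda_1\leq(\lambda_n-\lambda_1)\sin^2\theta_k$, one obtains
\begin{equation*}
\frac{\rho(x_k)-\rho(y_k^*)}{\rho(x_k)-\lambda_1}\;\geq\;\frac{C'\epsilon^2}{\lambda_n-\lambda_1}\;>\;0
\end{equation*}
along the chosen sequence, contradicting the global quasi-optimality of $x_k$. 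Hence $\lim_{\theta_0\to 0}\cos\angle(r_k,W_k)=0$, and we may continuously set $\cos\angle(r_k,W_k)\big|_{\theta_0=0}=0$.

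The main obstacle is keeping \emph{uniform} lower bounds on all constants as $\theta_0\to 0$. The delicate point is the gap $b(x_k,\hat p)=\rho(\hat p)-\rho(x_k)$: without Assumption \ref{anglev1Wk}, $\hat p$ could align arbitrarily closely with $v_1$, collapsing $b$ and invalidating the quadratic descent estimate of Lemma \ref{rhodecrease}. A secondary bookkeeping issue is that the angle $\angle(r_k,W_k)$ is taken in the Euclidean inner product while the directions in $W_k$ are $B$-normalized, but the equivalence of the two norms with a $\theta_0$-independent condition number allows such factors to be absorbed without affecting the asymptotic argument.
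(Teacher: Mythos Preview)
Your proposal is correct and follows essentially the same contradiction argument as the paper: assume the cosine does not vanish, pick a descent direction in $W_k\subset U_k$, invoke Lemma~\ref{rhodecrease} to obtain a decrease of order $\sin^2\theta_k$ in the Rayleigh quotient along this direction, and compare with $\rho(x_k)-\lambda_1=\mathcal{O}(\sin^2\theta_k)$ to contradict \eqref{glbqopt}. Your write-up is in fact more careful than the paper's, since you explicitly track the uniformity of the constants and spell out how Assumption~\ref{anglev1Wk} guarantees the gap hypothesis $\rho(\hat p)-\rho(x_k)\geq\delta>0$ needed for Lemma~\ref{rhodecrease}, which the paper leaves implicit in its $\mathcal{O}$-notation.
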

%\begin{proof} See Appendix. \end{proof}

Note that $\cos\angle(r_k,W_k)|_{\theta_0 = 0}$ is not defined, since Algorithm~\ref{alg:pl+1} with $x_0 = v_1$ would not proceed. However, thanks to \eqref{nearorth}, it is reasonable for us to make the following assumption about the behavior $\cos\angle(r_k,W_k)$ near $\theta_0=0$.

\begin{aspt}\label{differentiability}For a given step $k \geq 3$, suppose that the iterate $x_k$ of Algorithm~\ref{alg:pl+1} achieves global quasi-optimality \eqref{glbqopt}. We define $\cos\angle(r_k,W_k)\big|_{\theta_0=0}=0$ and assume that $\cos\angle(r_k,W_k)$ is differentiable at $\theta_0=0$; that is, we assume that $\cos\angle(r_k,W_k) \leq \mathcal{O}(\sin\theta_0) = \mathcal{O}(\sin\theta_\ell)$ \textup{($1 \leq \ell \leq k$)} for a sufficiently small $\theta_0$. 
\end{aspt}

We note that Assumption \ref{differentiability} is not just presented for technical convenience, but is consistent with our numerical experience, at least for relatively small $k$. Under such an assumption, we can establish the inductive step as follows. 

\begin{thm}\label{thminductive}Assume that the $B$-normalized $\{p_0,\ldots, p_{k-1}\}$ of Algorithm~\ref{alg:pl+1} are approximately conjugate, and hence $x_k$ achieves global quasi-optimality \eqref{glbqopt}. Under Assumptions \ref{assumptionlincvg}, \ref{anglev1Wk}, and \ref{samepoly}, $\{p_0,\ldots,p_{k}\}$ are approximately conjugate. 
%Then at any given step $k$, for sufficiently small $\theta_0$, $\frac{\rho(x_k)-\rho(y^*)}{\rho(x_k)-\lambda_1} = \mathcal{O}(\sin\theta_0),$ where $y^*$ is the global minimizer in $U_k$.
\end{thm}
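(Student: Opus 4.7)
The plan is to establish $p_k^T(A-\rho_k B)p_j = \mathcal{O}(\sin\theta_j)$ for every $0 \leq j < k$, completing the inductive step that, together with the base case $\{p_0,p_1,p_2\}$ of Lemmas~\ref{apprxconjkm1k} and \ref{apprxconj02} and the quasi-optimality baseline~\eqref{basecase}, gives the full conclusion. The case $j = k-1$ is immediate from Lemma~\ref{apprxconjkm1k}, leaving $j < k-1$.

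For $j < k-1$, unfold $p_k = c_k(g_k - \tau_k p_{k-1})$ with $c_k = \|\widetilde p_k\|_B^{-1}$ and $\tau_k = p_{k-1}^T(A-\rho_{k-1}B)g_k / p_{k-1}^T(A-\rho_{k-1}B)p_{k-1}$. Assumption~\ref{anglev1Wk} keeps the denominator $\rho(p_{k-1})-\rho_{k-1}$ bounded below by a positive constant, so $\tau_k = \mathcal{O}(\|g_k\|_B) = \mathcal{O}(\sin\theta_k)$, and the generic lower bound $\|\widetilde p_k\|_B = \Theta(\sin\theta_k)$ gives $c_k = \Theta(\sin\theta_k^{-1})$ and $c_k \tau_k = \mathcal{O}(1)$. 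Splitting
\begin{equation*}
p_k^T(A-\rho_k B)p_j = c_k\,g_k^T(A-\rho_k B)p_j - c_k\tau_k\,p_{k-1}^T(A-\rho_k B)p_j,
\end{equation*}
the right term is $\mathcal{O}(\sin\theta_j)$ via the inductive hypothesis $p_{k-1}^T(A-\rho_{k-1}B)p_j = \mathcal{O}(\sin\theta_j)$ and the shift perturbation $|\rho_{k-1}-\rho_k||p_{k-1}^TBp_j| = \mathcal{O}(\sin^2\theta_{k-1})$ (from Assumption~\ref{assumptionlincvg} and $\|p_{k-1}\|_B=\|p_j\|_B=1$).

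The crux is to show $g_k^T(A-\rho_k B)p_j = \mathcal{O}(\sin\theta_k\sin\theta_j)$, so that multiplication by $c_k$ yields $\mathcal{O}(\sin\theta_j)$. A crude Cauchy--Schwarz bound gives only $\mathcal{O}(\sin\theta_k)$; the extra $\sin\theta_j$ must come from directional information tied to the global quasi-optimality of $x_k$. Invoke Assumption~\ref{samepoly} at level $j$ to write $g_k = \Phi_j r_k + \eta_{jk}$ with $\Phi_j = Mp^{(j)}_{m-1}((A-\rho_j B)M)$ and $\|\eta_{jk}\| = \mathcal{O}(\sin\theta_j\sin\theta_k)$; the correction is harmless. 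Using the symmetry $\Phi_j^T = \Phi_j$ (from the identity $Mp(K_j^T) = p(K_j)M$ with $K_j = M(A-\rho_j B)$) together with $A-\rho_k B = (A-\rho_j B)+(\rho_j-\rho_k)B$, the principal quantity reduces to
\begin{equation*}
r_k^T\Phi_j(A-\rho_k B)p_j = r_k^T q^{(j)}(K_j)p_j + \mathcal{O}(\sin^2\theta_j)\|r_k\|,\qquad q^{(j)}(x) = x\,p^{(j)}_{m-1}(x).
\end{equation*}
Granted that $q^{(j)}(K_j)p_j$ lies in $W_k$ up to an $\mathcal{O}(\sin^2\theta_j)$ perturbation with uniformly bounded norm, Lemma~\ref{qopteqvorth} paired with Assumption~\ref{differentiability} delivers $\cos\angle(r_k,W_k) = \mathcal{O}(\sin\theta_0) = \mathcal{O}(\sin\theta_j)$ and hence $|r_k^T q^{(j)}(K_j)p_j| = \mathcal{O}(\sin\theta_k\sin\theta_j)$, closing the estimate.

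The main obstacle is the approximate containment $q^{(j)}(K_j)p_j \in W_k + \mathcal{O}(\sin^2\theta_j)$. My intended route is to relate the action of $K_j$ on vectors in $W_{j+1}$ to the cycle-to-cycle update of the preconditioned residual: a direct computation from $\widetilde x_{j+1} = w_{j+1}(1)x_j + g_j + p_{j-1}w_{j+1}(m+2)$ yields $Mr_{j+1} = Mr_j + \alpha\,K_j g_j + \mathcal{O}(\sin^2\theta_j)$ for a suitable nonzero $\alpha$, placing $K_j g_j$ in $\mathrm{span}\{Mr_j, Mr_{j+1}\}$ and, by Assumption~\ref{samepoly}, in $\mathrm{span}\{g_j, g_{j+1}\} \subset W_k$, each up to $\mathcal{O}(\sin^2\theta_j)$. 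Iterating across cycles $j, j+1, \ldots, j+i$ captures each $K_j^i p_j$ for $1 \leq i \leq m$; combining with the unrolling $p_j = \sum_{i=0}^j\alpha_{ji}g_i$ (from $p_j = c_j g_j - c_j\tau_j p_{j-1}$) propagates the containment throughout. Throughout, uniform bounds on the coefficients $\alpha_{ji}$, on $\|\Phi_i\|$, and on the polynomial $p^{(j)}_{m-1}$, all consequences of Assumptions~\ref{assumptionlincvg}, \ref{anglev1Wk}, and \ref{samepoly}, must be verified; this bookkeeping is the most delicate part of the argument.
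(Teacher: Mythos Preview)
Your overall architecture matches the paper's: reduce to bounding $g_k^T(A-\rho_k B)p_j$, invoke the symmetry of the preconditioned polynomial operator to rewrite this as an inner product of $r_k$ against a vector that should lie in $W_k$, and finish with Assumption~\ref{differentiability}. The divergence is in \emph{which} vector you try to place in $W_k$, and this is where your argument breaks down.

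You aim to show $q^{(j)}(K_j)p_j \in W_k$ up to an $\mathcal{O}(\sin^2\theta_j)$ error. Your route for this fails for $m>1$. First, Assumption~\ref{samepoly} asserts only that the polynomial operators $\Phi_i = p^{(i)}_{m-1}(K_i)M$ are close to one another across cycles; it does \emph{not} say $g_j \propto Mr_j$. Hence $\mathrm{span}\{Mr_j,Mr_{j+1}\}$ is in general not contained in $\mathrm{span}\{g_j,g_{j+1}\}$ unless $p^{(j)}_{m-1}$ is constant (the LOPCG case $m=1$). Second, your recurrence $Mr_{j+1}=Mr_j+\alpha K_j g_j+\mathcal{O}(\sin^2\theta_j)$ drops the term $w_k(m{+}2)\,K_j p_{j-1}$, which is only $\mathcal{O}(\sin\theta_j)$, the same order as the leading terms; you cannot absorb it into the remainder. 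Third, even granting the recurrence, each application of $K_j$ advances one cycle, so capturing $K_j^m p_j$ would require $g_j,\ldots,g_{j+m}\in W_k$, i.e., $j\le k-1-m$; this excludes the cases $k-m\le j\le k-2$ when $m\ge 2$.

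The paper avoids all of this with a single trick: write $p_\ell = \gamma_{\ell(\ell+1)}^{-1}(x_{\ell+1}-\beta_{\ell(\ell+1)}x_\ell)$, so that $p_\ell^T(A-\rho_k B)g_k = \gamma_{\ell(\ell+1)}^{-1}\big[(r_{\ell+1}-\beta_{\ell(\ell+1)}r_\ell)^Tg_k + \mathcal{O}(\sin^2\theta_\ell)\big]$. Now apply Assumption~\ref{samepoly} and the symmetry of $\Phi_\ell$ to swap $r_\ell^T g_k \approx g_\ell^T r_k$ (and likewise for $\ell{+}1$). Since $g_\ell,g_{\ell+1}\in W_k$ \emph{trivially}, Assumption~\ref{differentiability} gives $|g_\ell^T r_k|\le \|g_\ell\|\|r_k\|\mathcal{O}(\sin\theta_\ell)$ directly, and the $\gamma_{\ell(\ell+1)}^{-1}=\mathcal{O}(\sin^{-1}\theta_\ell)$ cancels cleanly. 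The missing idea in your attempt is precisely this: work with $p_\ell$ expressed through consecutive iterates (hence residuals) rather than trying to push $p_j$ through polynomials in $K_j$ and argue about the resulting subspace.
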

\begin{proof}
%The proof is carried out by mathematical induction. First, we note that LOPCG iterate $x_2$ is globally optimal in $U_2=\mathrm{span}\{x_0\}+W_2 = \mathrm{span}\{x_0\}+\mathrm{span}\{Mr_0,Mr_1\}$, and that $p_1^T(A-\rho(x_1)B)p_0 = \mathcal{O}(\sin\theta_0)$, $p_2^T(A-\rho(x_2)B)p_1 = \mathcal{O}(\sin\theta_1)$ (Lemma \ref{apprxconjkm1k}) and $p_2^T(A-\rho(x_2)B)p_0 = \mathcal{O}(\sin\theta_0)$ (Lemma \ref{apprxconj02}). That is, global quasi-optimality and pairwise approximate conjugate search directions are achieved at step 2. 
%Assume that $\{p_0,\ldots,p_{k-1}\}$ are pairwise approximately conjugate, and $x_k$ is globally quasi-optimal. We know from Lemma \ref{qopteqvorth} that $\lim_{\theta_0 \rightarrow 0}\cos\angle(r_k,W_k) = 0$. By the assumption of differentiability, $\big|\cos\angle(r_k,W_k)\big| \leq \mathcal{O}(\sin\theta_0)$. 
To complete the proof, it is sufficient to show that $p_k$ is approximately conjugate to $p_{\ell}$ for all $0 \leq \ell \leq k-2$. For any $\ell$, $0 \leq \ell \leq k-2$, note that $x_{\ell+1} = \beta_{\ell(\ell+1)}x_\ell + \gamma_{\ell(\ell+1)}p_\ell$, where $\gamma_{\ell(\ell+1)} = \mathcal{O}(\sin\theta_\ell)$ and $\beta_{\ell(\ell+1)} = 1-\mathcal{O}(\sin\theta_\ell)$. It follows that $p_\ell = \frac{1}{\gamma_{\ell(\ell+1)}}\left(x_{\ell+1}-\beta_{\ell(\ell+1)}x_\ell\right)$, and 
{\small
\begin{eqnarray}\nonumber
&& \big|p_\ell^T (A-\rho(x_k)B)g_k\big| = \frac{1}{|\gamma_{\ell(\ell+1)}|}\Big|\big(x_{\ell+1}^T-\beta_{\ell(\ell+1)}x_\ell^T\big)(A-\rho(x_k)B)g_k\Big| \\ \nonumber
&=& \frac{1}{|\gamma_{\ell(\ell+1)}|}\Big|\big[x_{\ell+1}^T(A-\rho(x_{\ell+1})B)-\beta_{\ell(\ell+1)}x_\ell^T(A-\rho(x_\ell)B)\big]g_k + \\ \nonumber
&& \qquad\qquad \qquad \left(\rho(x_{\ell+1})-\rho(x_k)\right)x_{\ell+1}^TBg_k - \beta_{\ell(\ell+1)}\left(\rho(x_\ell)-\rho(x_k)\right)x_\ell^TBg_k\Big| \\ \nonumber
v b&=& \frac{ \big|(r_{\ell+1}^T-\beta_{\ell(\ell+1)}r_\ell^T)g_k + \mathcal{O}(\sin^2\theta_{\ell+1})\|Bx_{\ell+1}\|\|g_k\|+ \mathcal{O}(\sin^2\theta_\ell)\|Bx_\ell\|\|g_k\| \big|}{|\gamma_{\ell(\ell+1)}|} \\ \nonumber
& \leq & \frac{\mathcal{O}(\sin^2\theta_\ell)\|g_k\|}{\mathcal{O}(\sin\theta_\ell)}+\frac{\big|(r_{\ell+1}^T-\beta_{\ell(\ell+1)}r_\ell^T)Mp^{(k)}_{m-1}\left((A-\rho_k B)M\right)r_k\big|}{|\gamma_{\ell(\ell+1)}|} \\ \nonumber
& = & \mathcal{O}(\sin\theta_\ell)\|g_k\|+\frac{1}{|\gamma_{\ell(\ell+1)}|}\Big|r_{\ell+1}^T\left[p^{(\ell+1)}_{m-1}\!\left(M(A-\rho_{\ell+1}B)\right)M+\mathcal{O}(\sin\theta_{\ell+1})\right]r_k \\ \nonumber
&& \qquad \qquad \qquad \qquad \qquad\: -\beta_{\ell(\ell+1)}r_\ell^T \left[p^{(\ell)}_{m-1}\!\left(M(A-\rho_\ell B)\right)M+\mathcal{O}(\sin\theta_\ell)\right]r_k\Big|.
%
%& \leq & {\frac{\|r_k\|\left(\|Mr_{\ell+1}\|\cos \angle(Mr_{\ell+1},r_k)+\beta_{\ell(\ell+1)}\|Mr_\ell\| \cos\angle(Mr_\ell,r_k)\right)+\|Mr_k\|\mathcal{O}(\sin^2\theta_\ell)}{\mathcal{O}(\sin\theta_\ell)}} \\ \nonumber
%&\leq & \frac{\|M\|\|Mr_k\|\left(\|M\|\|r_{\ell+1}\| +\beta_{\ell(\ell+1)}\|M\|\|r_\ell\|\right)\mathcal{O}(\sin\theta_0)}{\mathcal{O}(\sin\theta_\ell)}+ \|Mr_k\|\mathcal{O}(\sin\theta_\ell)\\ \nonumber
%&=& \kappa_2(M)\|Mr_k\|\mathcal{O}(\sin\theta_0)+\|Mr_k\|\mathcal{O}(\sin\theta_\ell) = \|Mr_k\|\mathcal{O}(\sin\theta_0).
\end{eqnarray}
}Since $p^{(\ell)}_{m-1}\left(M(A-\rho_\ell B)\right)M$ is symmetric, $r_{\ell}p^{(\ell)}_{m-1}\left(M(A-\rho_\ell B)\right)M= g^T_\ell$, and this holds similarly if $\ell$ is replaced with $\ell+1$. Also, since $x_{k}$ achieves global quasi-optimality, by Assumption \ref{differentiability}, $|\cos\angle(r_k,W_k)| \leq \mathcal{O}(\sin\theta_\ell)$ ($0 \leq \ell \leq k$). Since $g_\ell,g_{\ell+1} \in W_k$, we have $|\cos\angle(r_k,g_\ell)| \leq \mathcal{O}(\sin\theta_\ell)$ and $|\cos\angle(r_k,g_{\ell+1})| \leq \mathcal{O}(\sin\theta_{\ell+1})$. Note that $\|r_\ell\|=\mathcal{O}(\sin\theta_\ell)$, and $\|r_k\| = \mathcal{O}(\sin\theta_k) = \mathcal{O}(\sin\theta_{\ell+1}) = \mathcal{O}(\sin\theta_{\ell})$. Hence,
{\small
\begin{eqnarray}
\hspace{-0.5in}&&\big|p_\ell(A-\rho(x_k)B)g_k \big|  \\ \nonumber
&\leq& \|g_k\|\mathcal{O}(\sin\theta_\ell)+\frac{\Big|g_{\ell+1}^Tr_k + r_{\ell+1}^T r_k \mathcal{O}(\sin\theta_{\ell+1})-\beta_{\ell(\ell+1)}g_\ell^T r_k + r_{\ell}^T r_k \mathcal{O}(\sin\theta_\ell)\Big|}{\mathcal{O}(\sin\theta_\ell)} \\ \nonumber
&\leq & \|g_k\|\mathcal{O}(\sin\theta_\ell) + \frac{\|g_{\ell+1}\|\|r_k\||\cos\angle(r_k,g_{\ell+1})|+|\beta_{\ell(\ell+1)}|\|g_\ell\|\|r_k\||\cos\angle(r_k,g_\ell)|}{\mathcal{O}(\sin\theta_\ell)} \\ \nonumber
&& \qquad\qquad \qquad + \|r_{\ell+1}\|\|r_k\|\mathcal{O}(1) + \|r_\ell\|\|r_k\|\mathcal{O}(1) \\ \nonumber
&\leq & \|g_k\|\mathcal{O}(\sin\theta_\ell) + \frac{\|r_k\|\left(\|g_{\ell+1}\|\mathcal{O}(\sin\theta_{\ell+1})+|\beta_{\ell(\ell+1)}|\|g_\ell\|\mathcal{O}(\sin\theta_\ell)\right)}{\mathcal{O}(\sin\theta_\ell)} + \mathcal{O}(\sin^2\theta_\ell)\\ \nonumber
& \leq & \|g_k\|\mathcal{O}(\sin\theta_\ell) + \|g_{\ell+1}\|\mathcal{O}(\sin\theta_k) + \|g_\ell\|\mathcal{O}(\sin\theta_k)+\mathcal{O}(\sin^2\theta_\ell) = \mathcal{O}(\sin\theta_\ell).
%&\leq &  \|g_k\|\mathcal{O}(\sin\theta_\ell)+\frac{1}{\mathcal{O}(\sin\theta_\ell)}\Big|\|g_{\ell+1}\|\|r_k\||\cos\angle(r_k,g_{\ell+1})|+\|r_{\ell+1}\|\|r_k\|\mathcal{O}(\sin\theta_{\ell+1})- \\ \nonumber
%&& \qquad\qquad \qquad \qquad\qquad\qquad \beta_{\ell(\ell+1)}\|g_\ell\|\|r_k\||\cos\angle(r_k,g_\ell)| + \|r_\ell\|\|r_k\|\mathcal{O}(\sin\theta_\ell)\Big| \\ \nonumber
%&\leq&  \|g_k\|\mathcal{O}(\sin\theta_\ell) + \frac{\|r_k\|}{\mathcal{O}(\sin\theta_\ell)}\Big|\|g_{\ell+1}\|\mathcal{O}(\sin\theta_k)+\|r_{\ell+1}\|\mathcal{O}(\sin\theta_{\ell+1})- \\ \nonumber
%&& \qquad\qquad \qquad \qquad\qquad\qquad \beta_{\ell(\ell+1)}\|g_\ell\|\mathcal{O}(\sin\theta_k)+\|r_\ell\|\mathcal{O}(\sin\theta_\ell)\Big| \\ \nonumber
%&\leq&  \|g_k\|\mathcal{O}(\sin\theta_\ell)+\|g_{\ell+1}\|\mathcal{O}(\sin\theta_k)+\mathcal{O}(\sin\theta_{\ell})\mathcal{O}(\sin\theta_k) + \\ \nonumber
%&&  \qquad\qquad \qquad \quad |\beta_{\ell(\ell+1)}|\|g_\ell\|\mathcal{O}(\sin\theta_k)+\mathcal{O}(\sin\theta_\ell)\mathcal{O}(\sin\theta_k).
\end{eqnarray}
}Moreover, it is not difficult to see that $\big|p_\ell(A-\rho(x_k)B)g_k \big| = \|g_k\|\mathcal{O}(\sin\theta_\ell).$
Recall that $\widetilde{p}_k = g_k - \frac{p_{k-1}^T(A-\rho(x_{k-1})B)g_k}{p_{k-1}^T(A-\rho(x_{k-1})B)p_{k-1}}p_{k-1}$, such that $\|\widetilde{p}_k\|$ is proportional to $\|g_k\|$. The normalized search direction is $p_k = \frac{\eta_k}{\|g_k\|}\widetilde{p}_k$, where $\eta_k$ is chosen such that $\|p_k\|_B = 1$. Also, since $0 \leq \ell \leq k-2$, $p_\ell^T (A-\rho(x_{k-1})B)p_{k-1}=\mathcal{O}(\sin\theta_\ell)$, and
{\small
\begin{eqnarray}
&&\quad p_\ell^T (A-\rho(x_k)B)p_k = \frac{\eta_k}{\|g_k\|}p_\ell^T(A-\rho(x_k)B)\widetilde{p}_k \\ \nonumber
&=& \frac{\eta_k}{\|g_k\|}p_\ell^T(A-\rho(x_k)B)\left(g_k - \frac{p_{k-1}^T(A-\rho(x_{k-1})B)g_k}{p_{k-1}^T(A-\rho(x_{k-1})B)p_{k-1}}p_{k-1}\right) \\ \nonumber
&=& \frac{\eta_k}{\|g_k\|}\Big\{p_\ell^T(A-\rho(x_k)B)g_k -  \frac{p_{k-1}^T(A-\rho(x_{k-1})B)g_k}{p_{k-1}^T(A-\rho(x_{k-1})B)p_{k-1}} \times \\ \nonumber
&& \qquad\qquad\qquad\qquad\qquad \big[p_\ell^T(A-\rho(x_{k-1})B)p_{k-1}+ \left(\rho(x_{k-1})-\rho(x_k)\right)p_\ell^T B p_{k-1}\big]\Big\} \\ \nonumber
&=& \frac{\eta_k}{\|g_k\|}\Big\{\|g_k\|\mathcal{O}(\sin\theta_\ell) + \mathcal{O}(\|g_k\|)[\mathcal{O}(\sin\theta_\ell)+\mathcal{O}(\sin^2\theta_{k-1})]\Big\} = \mathcal{O}(\sin\theta_\ell).\vspace{-0.2in}
\end{eqnarray}
}

Finally, recall from Lemma \ref{apprxconjkm1k} that $p_{k-1}$ and $p_k$ are approximately conjugate. It follows that $\{p_0,\ldots,p_k\}$ are pairwise approximately conjugate. \qed
\end{proof}

With all the above work, we can finally present the major theorem of this section on the global quasi-optimality of all iterates generated by Algorithm~\ref{alg:pl+1}.\smallskip

\begin{thm}\label{globalopt}
Let $\{x_k\}$ be the iterates of Algorithm~\ref{alg:pl+1}, $r_k = (A -\rho(x_k)B)x_k$ the residual, and $\{p_k\}$ the $B$-normalized search directions. Suppose that Assumptions \ref{assumptionlincvg}, \ref{anglev1Wk}, and \ref{samepoly} hold. In Theorem \ref{conjoptext}, let $y^*$ and $z^*$ be the global minimizer in $U_k$ and $U_{k+1}$, respectively, and $C_{k+1}$ be the constant for which $\rho(x_{k+1})-\rho(z^*)\leq \rho(x_k)-\rho(y^*)+C_{k+1}\sin\theta_{k+1}\sin^2\theta_0$ if $\{p_0,\ldots,p_k\}$ are approximately conjugate. Then for any given $k \geq 3$, $\rho(x_k) - \rho(y^*) \leq \sum_{j=3}^k C_j\sin\theta_j \sin^2\theta_0,$ and therefore, 
{\small
\begin{eqnarray}\label{quasioptratio}
\qquad\: \lim_{\theta_0 \rightarrow 0}\frac{\rho(x_k)-\rho(y^*)}{\rho(x_k)-\lambda_1} &\leq& \lim_{\theta_0 \rightarrow 0}\frac{\sum_{j=3}^k C_j \sin\theta_j\sin^2\theta_0}{\underline{\xi}^k \left(\rho(f_0)\!-\!\lambda_1\right)\sin^2\theta_0}  \leq \lim_{\theta_0 \rightarrow 0} \frac{\left(\underline{\xi}^{-1}\right)^k\sum_{j=3}^k C_j\sin\theta_j}{\lambda_2\!-\!\lambda_1}=0.
\end{eqnarray}
}
\end{thm}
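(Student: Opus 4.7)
My plan is to prove the statement by induction on $k \geq 3$, chaining together Theorem~\ref{conjoptext} (which propagates global quasi-optimality from $U_k$ to $U_{k+1}$ provided the search directions up through $p_k$ are pairwise approximately conjugate) with Theorem~\ref{thminductive} (which extends the approximate conjugacy of $\{p_0,\ldots,p_{k-1}\}$ to include $p_k$ once $x_k$ achieves global quasi-optimality). The base case $k=3$ is already recorded in~\eqref{basecase}: Lemmas~\ref{apprxconjkm1k} and~\ref{apprxconj02} give that $\{p_0,p_1,p_2\}$ are pairwise approximately conjugate, and since $x_2$ is the exact global minimizer in $U_2$ via the Rayleigh--Ritz step, one application of Theorem~\ref{conjoptext} yields $\rho(x_3)-\rho(z^*) \leq C_3\sin\theta_3\sin^2\theta_0$, matching the target.

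For the inductive step, I will assume both that the target bound $\rho(x_k)-\rho(y^*) \leq \sum_{j=3}^k C_j\sin\theta_j\sin^2\theta_0$ holds in $U_k$ and that $\{p_0,\ldots,p_{k-1}\}$ are pairwise approximately conjugate. The first part of the hypothesis, combined with the denominator bound computed below, gives global quasi-optimality of $x_k$ in the sense of Definition~\ref{defnglbopt}. Invoking Theorem~\ref{thminductive} under Assumptions~\ref{assumptionlincvg},~\ref{anglev1Wk}, and~\ref{samepoly} (with Assumption~\ref{differentiability} silently in force, as in the rest of the section) then extends the pairwise approximate conjugacy to $\{p_0,\ldots,p_k\}$. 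Theorem~\ref{conjoptext} now applies at the next step and produces
\[
\rho(x_{k+1})-\rho(z^*_{k+1}) \;\leq\; \rho(x_k)-\rho(y^*_k) + C_{k+1}\sin\theta_{k+1}\sin^2\theta_0,
\]
where $z^*_{k+1}$ is the global minimizer in $U_{k+1}$. Combined with the inductive bound this closes the induction and yields $\rho(x_k)-\rho(y^*) \leq \sum_{j=3}^k C_j\sin\theta_j\sin^2\theta_0$ for all $k\geq 3$.

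For the limit on the right of~\eqref{quasioptratio}, I will use Assumption~\ref{assumptionlincvg} together with the identity~\eqref{rqerror} applied at $j=0$ to obtain $\rho(x_k)-\lambda_1 \geq \underline{\xi}^k(\rho(f_0)-\lambda_1)\sin^2\theta_0$, and then use $\rho(f_0)-\lambda_1 \geq \lambda_2-\lambda_1>0$ (since $\rho(f_0) \in [\lambda_2,\lambda_n]$). Dividing the induction bound by this lower estimate cancels the $\sin^2\theta_0$ factor and yields the middle and right expressions of~\eqref{quasioptratio}. Finally, \eqref{assumptionlincvg_eqv} forces $\sin\theta_j \to 0$ as $\theta_0 \to 0$ for each fixed $j \leq k$, so the finite sum $\sum_{j=3}^k C_j\sin\theta_j$ vanishes in the limit and the ratio tends to zero, as claimed.

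The main obstacle I anticipate is the simultaneous book-keeping of the two induction invariants (the cumulative error bound and the approximate conjugacy of all earlier $p_\ell$), because the two theorems feed into each other: Theorem~\ref{thminductive} consumes quasi-optimality at $x_k$ (which in turn rests on the error bound via Assumption~\ref{differentiability} and Lemma~\ref{qopteqvorth}) to deliver the new conjugacy, while Theorem~\ref{conjoptext} consumes the new conjugacy to deliver the next error bound. Care is also needed to ensure the constants $C_{k+1}$ of Theorem~\ref{conjoptext} line up with the index used in the final sum. Beyond this structural bookkeeping, the limit calculation itself reduces to routine arithmetic using the prescribed lower bound from Assumption~\ref{assumptionlincvg}.
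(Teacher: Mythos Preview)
Your proposal is correct and follows essentially the same approach as the paper: an induction whose base case is \eqref{basecase} (built from Lemmas~\ref{apprxconjkm1k} and~\ref{apprxconj02}) and whose inductive step alternates between Theorem~\ref{thminductive} (to extend approximate conjugacy) and Theorem~\ref{conjoptext} (to propagate the error bound). Your explicit treatment of the limit in \eqref{quasioptratio} via Assumption~\ref{assumptionlincvg}, \eqref{rqerror}, and $\rho(f_0)\in[\lambda_2,\lambda_n]$ simply spells out what the paper leaves implicit in the statement.
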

\begin{proof}
The proof is based on mathematical induction. The base case has been established in Lemmas~\ref{apprxconjkm1k} and \ref{apprxconj02} together with \eqref{basecase}. The inductive step is completed in Theorem~\ref{thminductive}. Then, by Theorem~\ref{conjoptext}, the conclusion holds. \qed
\end{proof}

{\bf Remark.} Interestingly, \eqref{quasioptratio} suggests that for a larger $k$, the \emph{relative} difference between $\rho(x_k)$ (locally optimized $\rho(\cdot)$ over $\mathrm{span}\{x_{k-1},g_{k-1},p_{k-2}\}$) and $\rho(y^*)$ (globally optimized $\rho(\cdot)$ over $U_k= \mathrm{span}\{x_0\}+\mathrm{span}\{g_0,\ldots,g_{k-1}\}$) tends to grow larger. The \emph{upper bound} on this relative difference scales like a linear function of $k$ (assuming that $|C_j\sin\theta_j| \leq C$ for all $j \geq 3$) multiplied by an exponentially increasing function of $k$. Nevertheless, for a \emph{given} outer iteration $k \geq 3$, Theorem~\ref{globalopt} shows that Algorithm~\ref{alg:pl+1} iterate $x_k \in U_k$ is almost as good as the corresponding global minimizer $y^*$ if $\theta_0$ is sufficiently small. As a result, Algorithm~\ref{alg:pl+1} would actually converge considerably faster than Theorem~\ref{thmcvgrate} suggests. This theorem also provides insight into the performance of LOPCG, which is an instance of Algorithm~\ref{alg:pl+1}, and the block extension LOBPCG. 

{\bf Remark.} A different viewpoint is followed in \cite{stathopoulos1998restarting} but with qualitatively similar results. 
Consider $k$ steps of CG solving the JD correction equation starting from the Ritz vector at the $i$-th iteration of Lanczos. Then, the distance between the Ritz vector of Lanczos at iteration $i+k$  and the CG solution after $k$ steps is bounded by $O(|\rho_i - \rho_{i+k}|)$. The locally optimal restarting approximates the $A$-norm minimization of CG over the entire space, so if Lanczos converges slowly or if $k$ is small, +K restarting will yield Ritz vectors close to the unrestarted Lanczos. If $\rho_i$ and $\rho_{i+k}$ are far, then convergence is already fast and the use of +K is not needed.

%%%%%%%%%%%%%%%%%%%%%%%%%%%%%%%%%%%%%%%%%%%%%%%%%%%%%%%%%%%%%%%%%%%
\section{Experiments}
\label{sec:numerical experiments}
First we show that TRPL+K does indeed achieve quasi optimality and then we investigate the effect of the maximum basis size $q$ and the number of previous retained vectors $l$ on the performance. Then we present an extensive set of experiments comparing TRPL+K against other methods on a variety of problems.

The matrix set is chosen to overlap with other papers in the literature. All matrices can be reproduced from these papers or downloaded from the SuiteSparse Matrix collection (formerly, at the University of Florida). Table \ref{tb:Matrix-info} lists some basic properties of these matrices. Matrices finan512 and Plate33K\_A0 have larger gap ratios so they are relatively easy problems. Matrices Trefethen\_20k and cfd1 are moderately hard problems, while matrices 1138bus and or56f are the hardest problems. The effectiveness of a method is typically manifested on harder problems.

We compare TRPL+K against the following eigenmethods: 
unrestarted GD (as a representative of unrestarted Lanczos), 
TRLan \cite{wu2000thick}, 
GD+K \cite{stathopoulos2007nearlyI,stathopoulos1998restarting}, 
LOBPCG \cite{knyazev2001toward}, 
and EigIFP \cite{golub2002inverse}. 
To allow for easier comparisons, we use only MATLAB 
implementations. We adopt the publicly available implementations for LOBPCG and EigIFP while we provide implementations for TRPL+K, TRLan, and GD+K.
All methods employ the same stopping criterion satisfying,
\begin{equation}\label{eq:stop_criterion}
    \|Au_i - \theta_i u_i\| \leq \|A\|_F \delta_{user},
\end{equation}
where $\|A\|_F$ is the Frobenius norm of $A$ and $\delta_{user}$ is a  user specified stopping tolerance. All experiments use \textit{$\delta$=1E-14}. 
All methods start with the same initial guess, $\text{rand}(n,k)$, with fixed seed number(12). We set the maximum number of restarts to 5000 for all methods. Since we focus on methods with limited memory, we set \textit{maxBasisSize=18}, \textit{minRestartSize=8} for TRPL+K, TRLan, and GD+K. Since EigIFP restarts with a single vector, we set \textit{maxBasisSize=18} for its inner iteration. For LOBPCG, the method always uses \textit{maxBasisSize=3}$p$. For other parameters we follow the defaults suggested in each code. We seek $p=$ 1, 5, and 10 algebraically smallest eigenpairs for both stardard and generalized eigenvalue problems. We employ soft locking since the desired number of eigenpairs are small. 
All computations are carried out on a Apple Macbook Pro with Intel Core i7 processors at 2.2GHz for a total of 4 cores and 16 GB of memory running the Mac Unix operating system. We use MATLAB 2016a with machine precision $\epsilon = 2.2E-16$. 
We compare both the number of matrix-vector products (reported as ``MV'' in the tables) and runtime in seconds (reported as ``Sec'').
%We report ``*'' if the method cannot converge to the requested stopping tolerance within 5000 restarts.

\begin{table}[ht!]
\centering
\caption{Properties of the test matrices for standard eigenvalue problems. Trefethen\_20k shorts as Tre20k, and Plate33K\_A0 shorts as PlaA0.}
\label{tb:Matrix-info}
\small
\begin{center}
    \begin{tabular}{ c c c c c }  \hline
    Matrix 		    & order   & nnz(A) 	 & $\kappa(A)$  & Source \\ \hline
    1138bus		    & 1138 	  & 4054     &  1.2E7   	& MM \\ 
%    nd3kf 		    & 9000    & 3279690  & 6.1E7  		& Yang  \\ 
    or56f 		    & 9000    & 2890150  & 6.6E7		& Yang \\
    Tre20k          & 20000   & 554466   & 2.9E5		& UF \\
    PlaA0           & 39366   & 914116   & 6.8E9	    & FEAP \\
    cfd1 		    & 70656   & 1825580  &  1.3E6 		& UF \\
    finan512 	    & 74752   & 596992   &  9.8E1		& UF \\ \hline
%    Andrews 		& 60000   & 760200   &  2.7E17  		& UF  	  \\ \hline
%    Cone\_A 		& 22032   & 1433068  &  2.7E6  		& FEAP  	  \\ \hline
    \end{tabular}
\end{center}
\end{table} 

\subsection{Quasi-optimality with +K}
Quasi optimality as defined in (\ref{glbqopt}) implies that as the initial guess becomes better, the relative difference between the vector iterates of TRPL+K and the unrestarted method tend to zero. Figure \ref{fig:quasi-opt} demonstrates that TRPL+K achieves this quasi optimality on two sample matrices. A standard eigenproblem is solved without preconditioning. TRPL+K uses only one inner iteration (i.e., equivalent to LOPCG) because it is easier to compare step by step to the unrestarted method. The Remark after Theorem 11 suggests that the ratio in \eqref{glbqopt} increases with the number of iterations, which we observe in the plots. Therefore, we plot only the first 100 iterations. 
What is important, however, is that for any given iteration, the ratio decreases as we make the initial guess better. Therefore, as TRPL+K converges asymptotically, it increasingly matches  the unrestarted method.

\begin{figure}[ht!]
  \centering
    \includegraphics[width=0.47\textwidth]{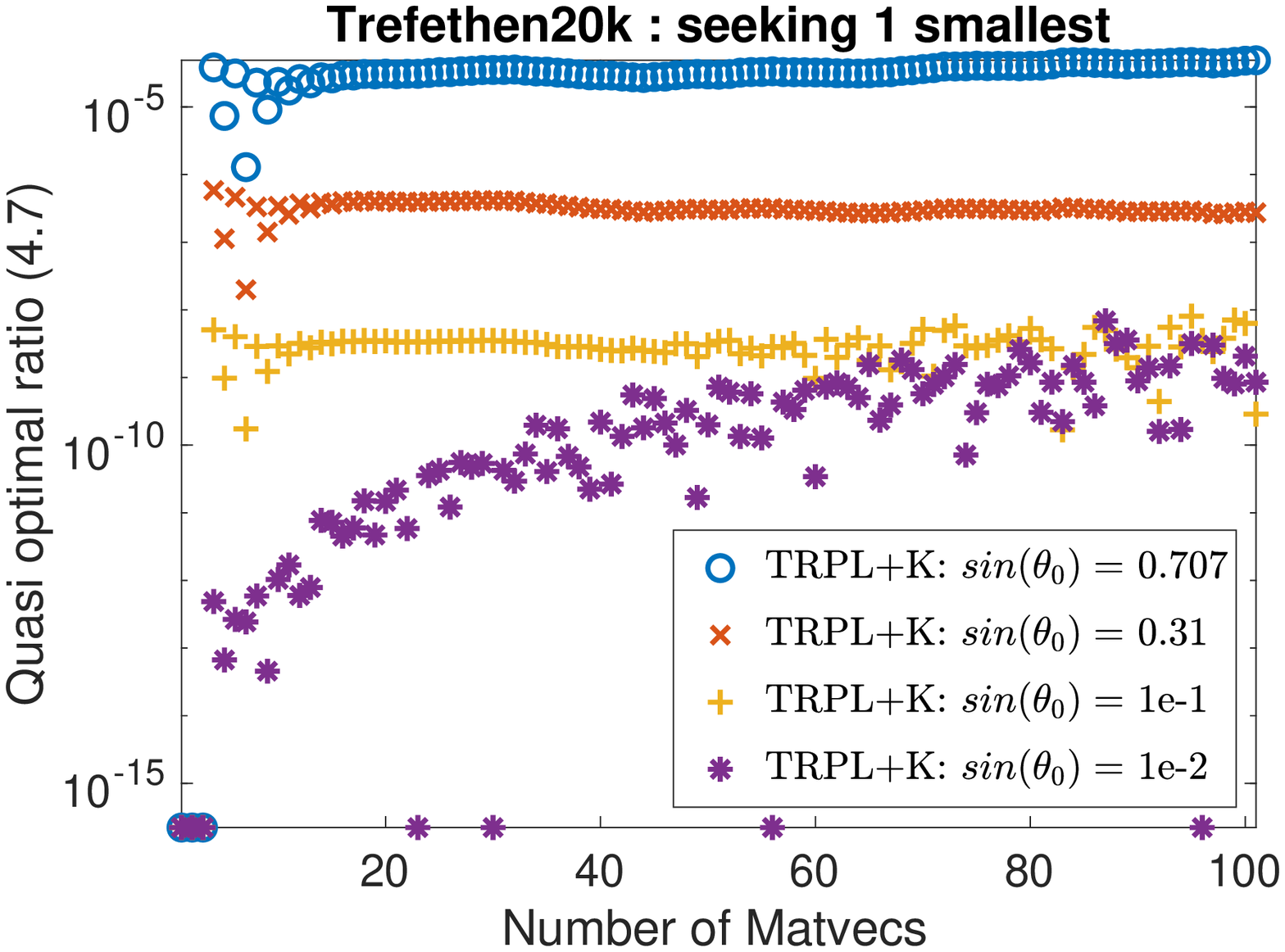}
    \includegraphics[width=0.47\textwidth]{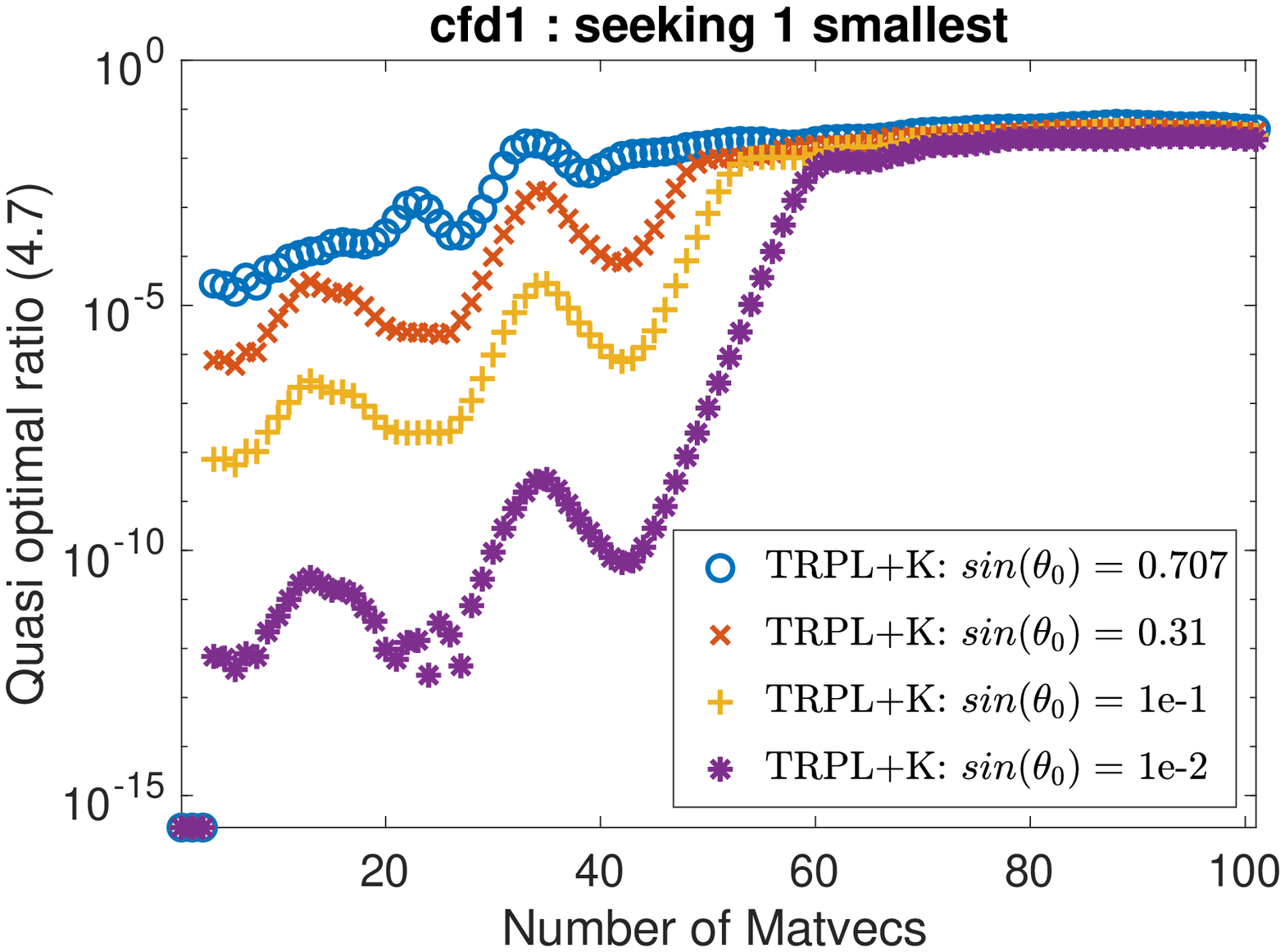}
    \caption{Confirming the quasi-optimality of TRPL+1 with no inner iterations by showing that the ratio (\ref{glbqopt}) converges to zero as the angle of the initial guess to the required eigenvector decreases. We plot this for the first 100 iterations.}
    \label{fig:quasi-opt}
\end{figure}

\subsection{The effects of maxBasisSize $q$ and maxPrevSize $l$}
First we investigate the effects of varying $q$ from 8 to 512 on the performance of TRPL+K (without a preconditioner) and TRLan compared against unrestarted Lanczos, with $l=1$ and $p=1$. We choose three hard problems where the differences are pronounced. Figure \ref{fig:seeking1_varyingmaxBsize} shows that as the maximum basis size increases, both TRPL+K and TRLan become similar to the unrestarted Lanczos. However, while TRLan requires the increased basis to significantly improve convergence, TRPL+K achieves very good or even close to optimal convergence with very small basis size. The slight increase of matvecs in TRPL+K with very large basis sizes may be attributed to the more targeted expansion of the subspace using the residual instead of the Lanczos vectors.

We then vary the number of previous vectors $l$ from 0 to 5 to study its impact on the convergence of TRPL+K (without a preconditioner), seeking $p=5$ smallest eigenvalues. When $l=0$ and without preconditioning, TRPL+K reduces to TRLan. Figure \ref{fig:seeking5_varyingmaxPsize} shows the similar trend for all three cases; a significant reduction in matvecs with $l=1$, while $l>1$ results in a slight deterioration in convergence. This is qualitatively similar to earlier observations for GD+K, where $l>1$ is beneficial only with a block method. This implies that we obtain all the benefits of the +K technique with minimal overhead.

\begin{figure}[ht!]
  \centering
  \begin{subfigure}[b]{0.32\textwidth}
      \includegraphics[width=\textwidth]{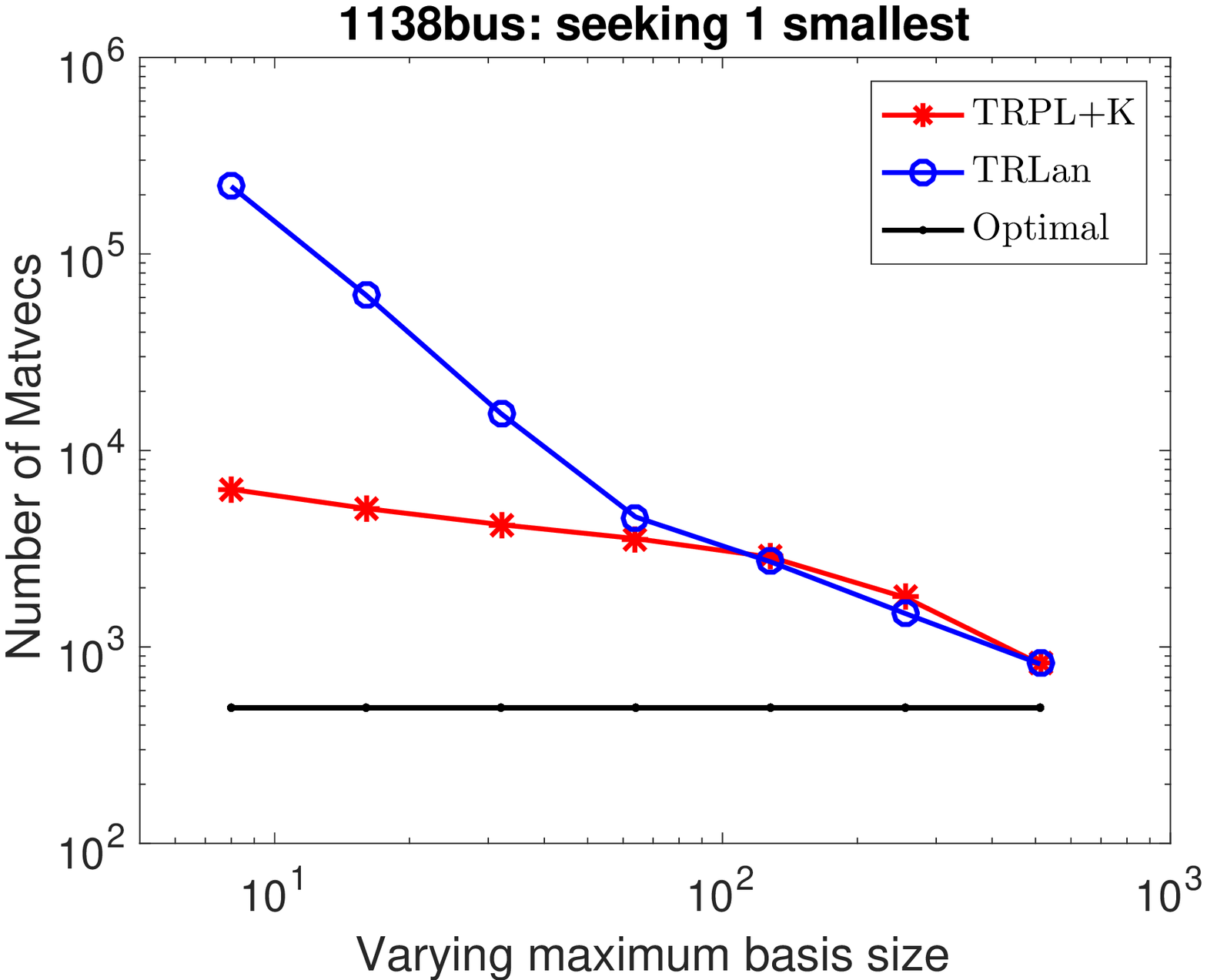}
      \caption{1138bus}
      \label{fig:1138bus_varyingmaxBsize}
    \end{subfigure}
    \begin{subfigure}[b]{0.32\textwidth}
      \includegraphics[width=\textwidth]{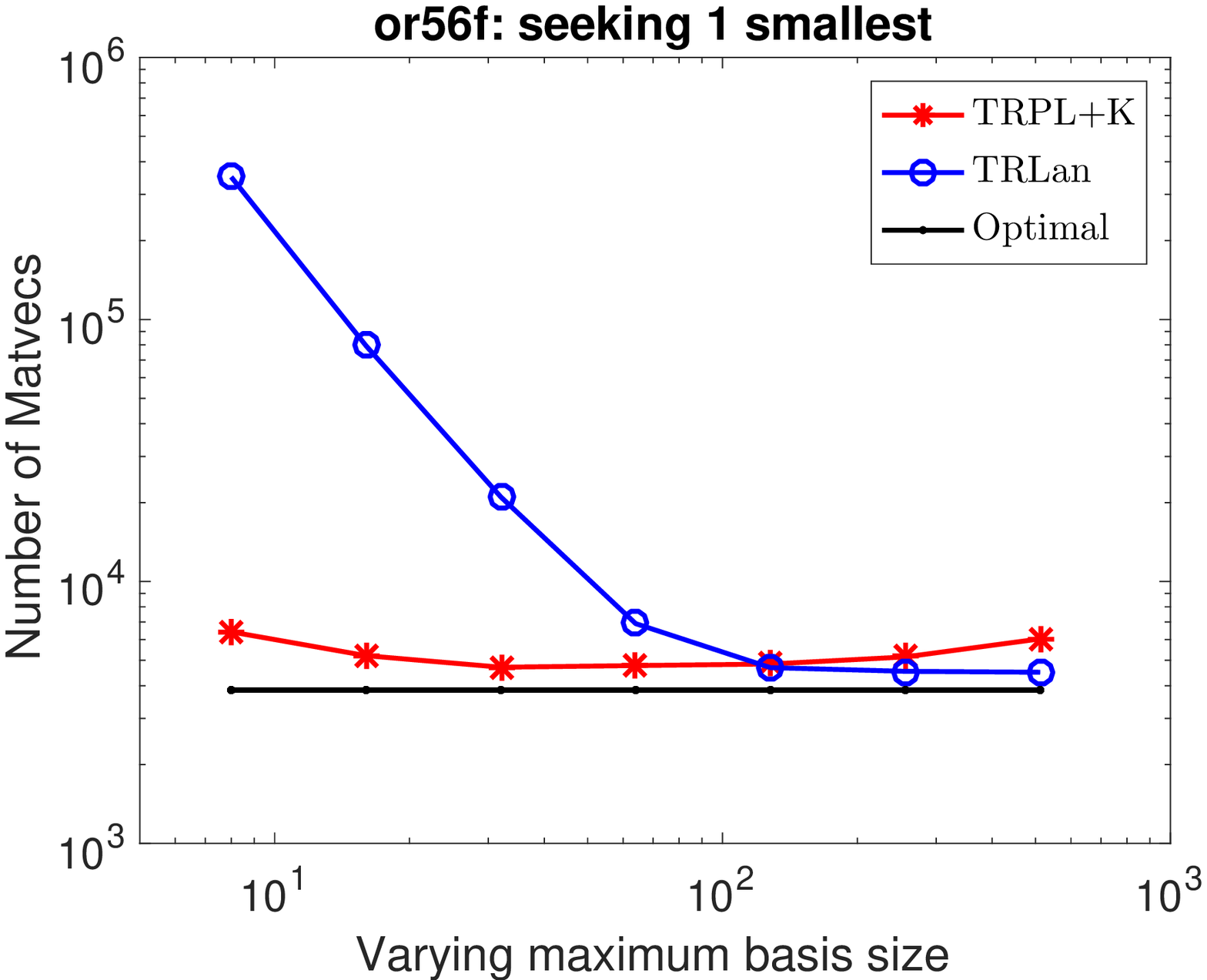}
      \caption{or56f}
      \label{fig:or56f_varyingmaxBsize}
     \end{subfigure}
     \begin{subfigure}[b]{0.32\textwidth}
      \includegraphics[width=\textwidth]{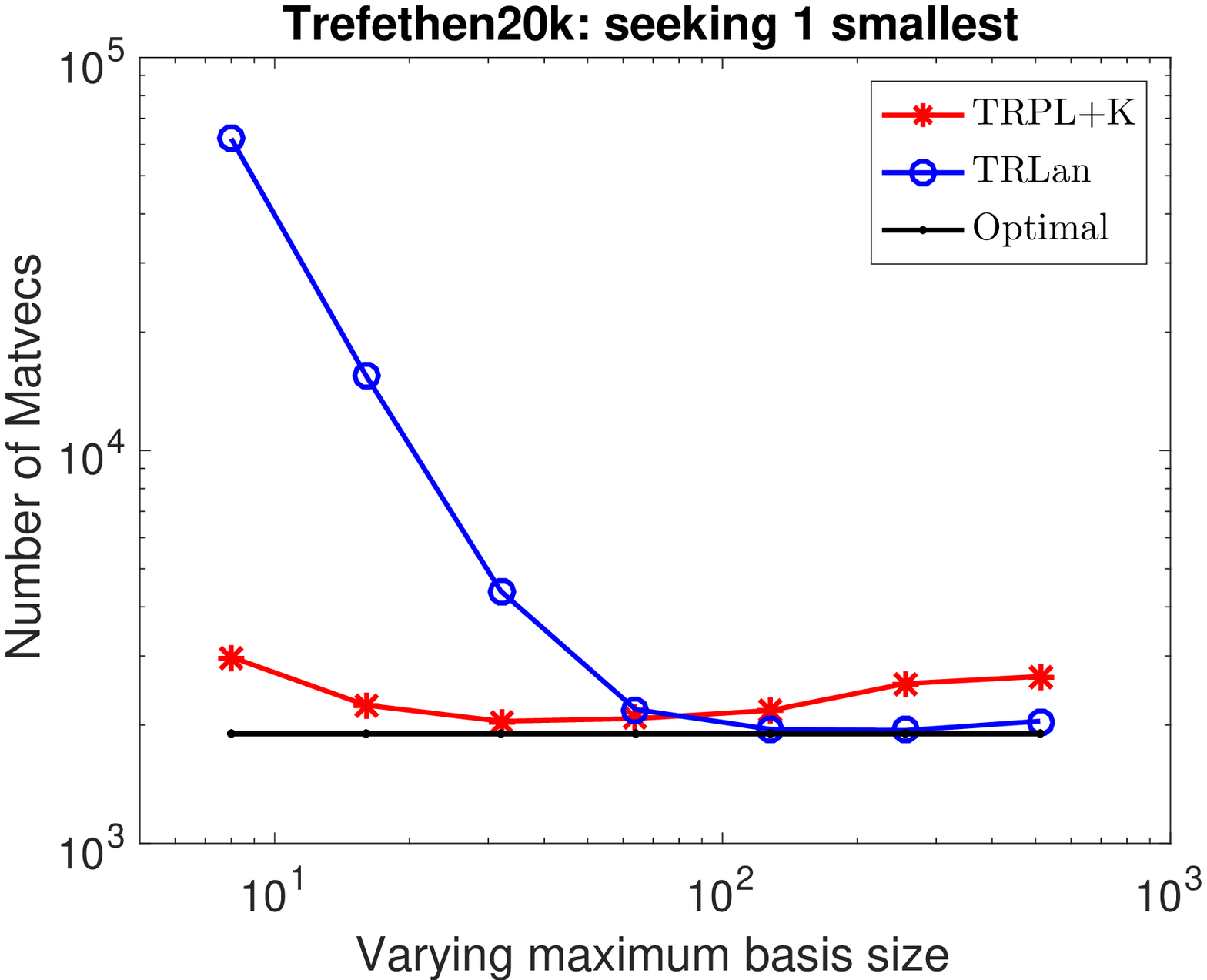}
      \caption{Trefethen\_20k}
      \label{fig:Trefethen20k_varyingmaxBsize}
     \end{subfigure}
     \caption{The effect of varying the maximum basis size $q$ on the convergence of TRPL+K and TRLan. Seeking $p=1$ smallest eigenvalue without preconditioning.}
     \label{fig:seeking1_varyingmaxBsize}
\end{figure}

\begin{figure}[ht!]
  \centering
  \begin{subfigure}[b]{0.32\textwidth}
      \includegraphics[width=\textwidth]{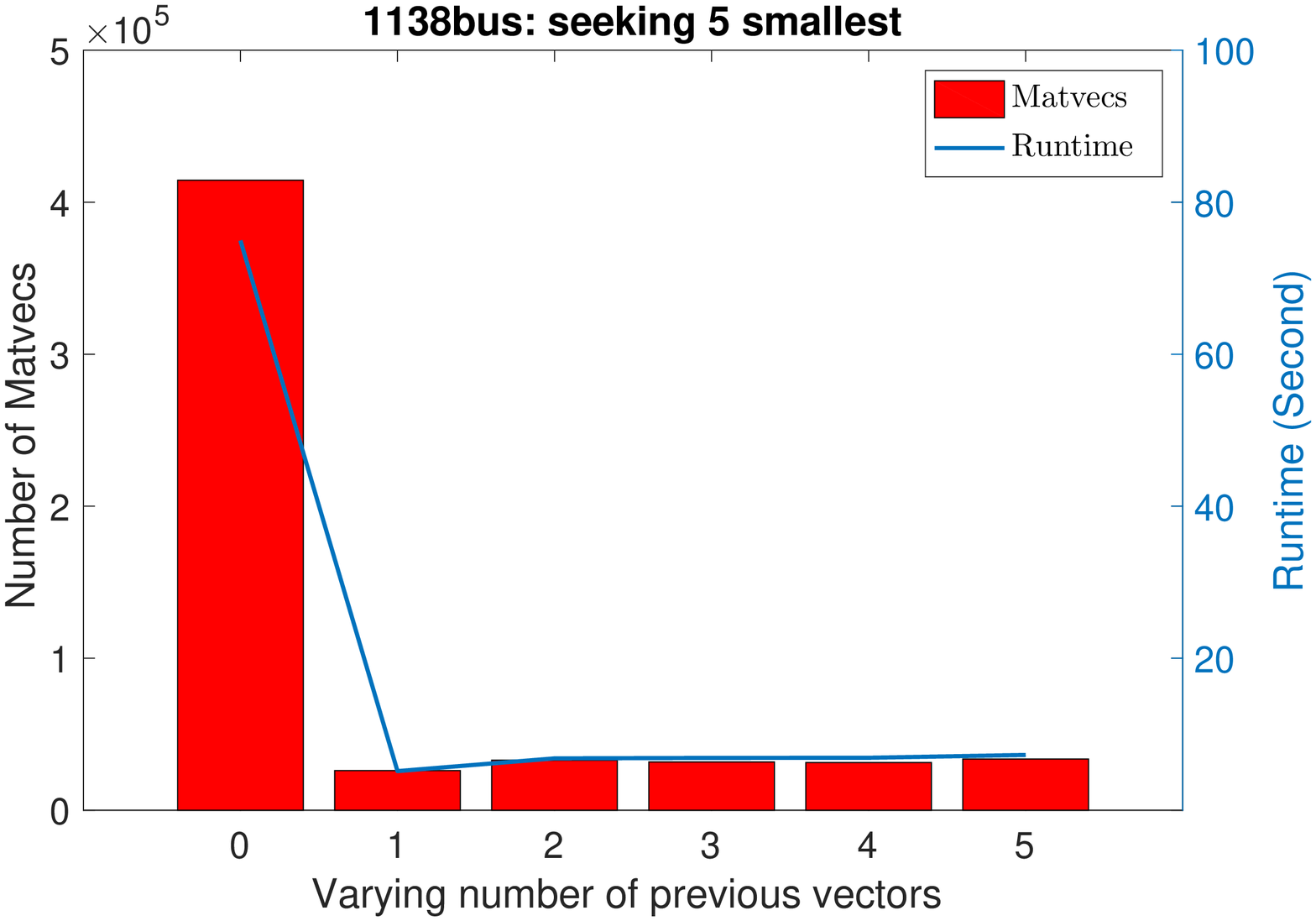}
      \caption{1138bus}
      \label{fig:1138bus_varyingmaxPsize}
    \end{subfigure}
    \begin{subfigure}[b]{0.32\textwidth}
      \includegraphics[width=\textwidth]{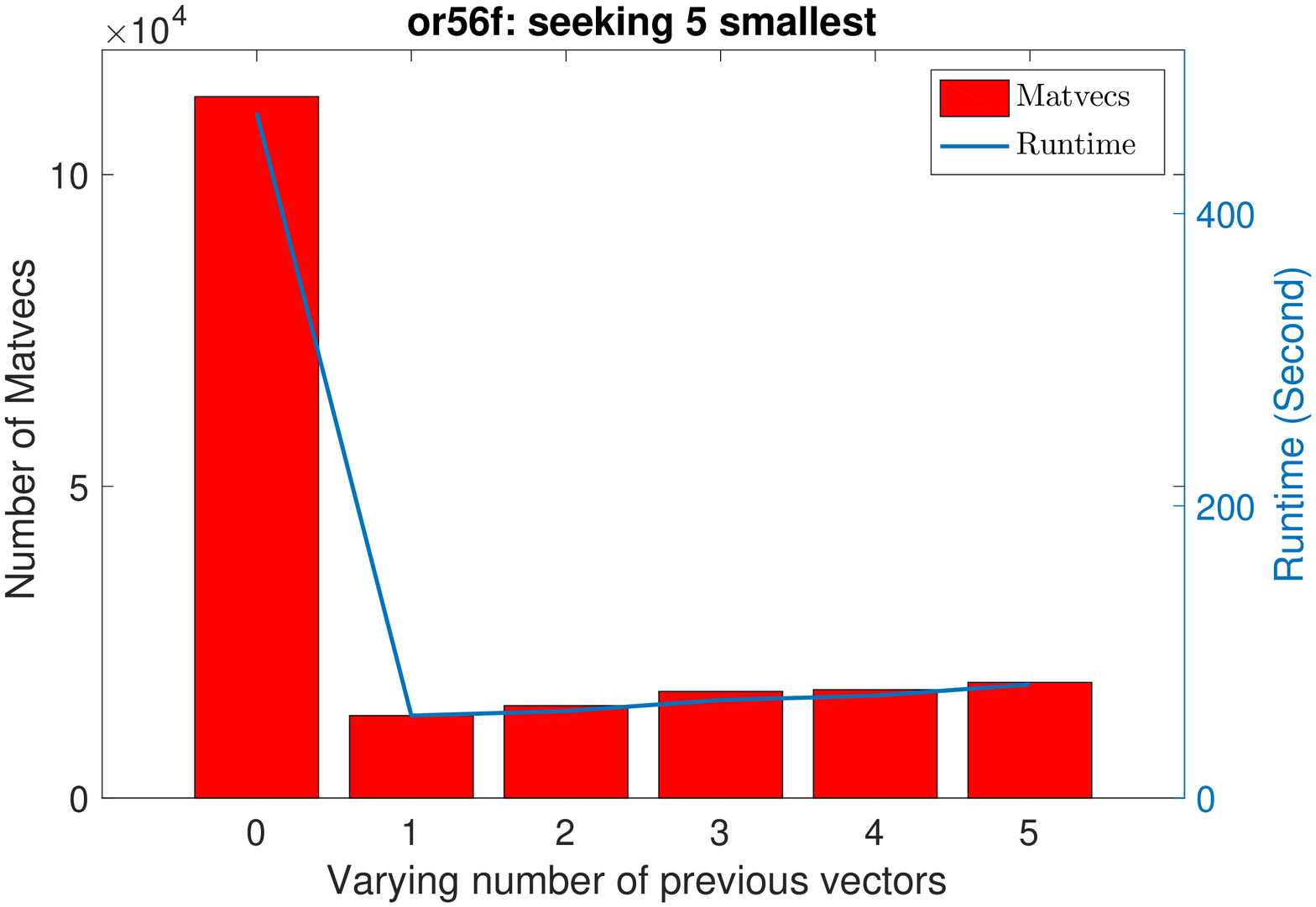}
      \caption{or56f}
      \label{fig:or56f_varyingmaxPsize}
     \end{subfigure}
     \begin{subfigure}[b]{0.32\textwidth}
      \includegraphics[width=\textwidth]{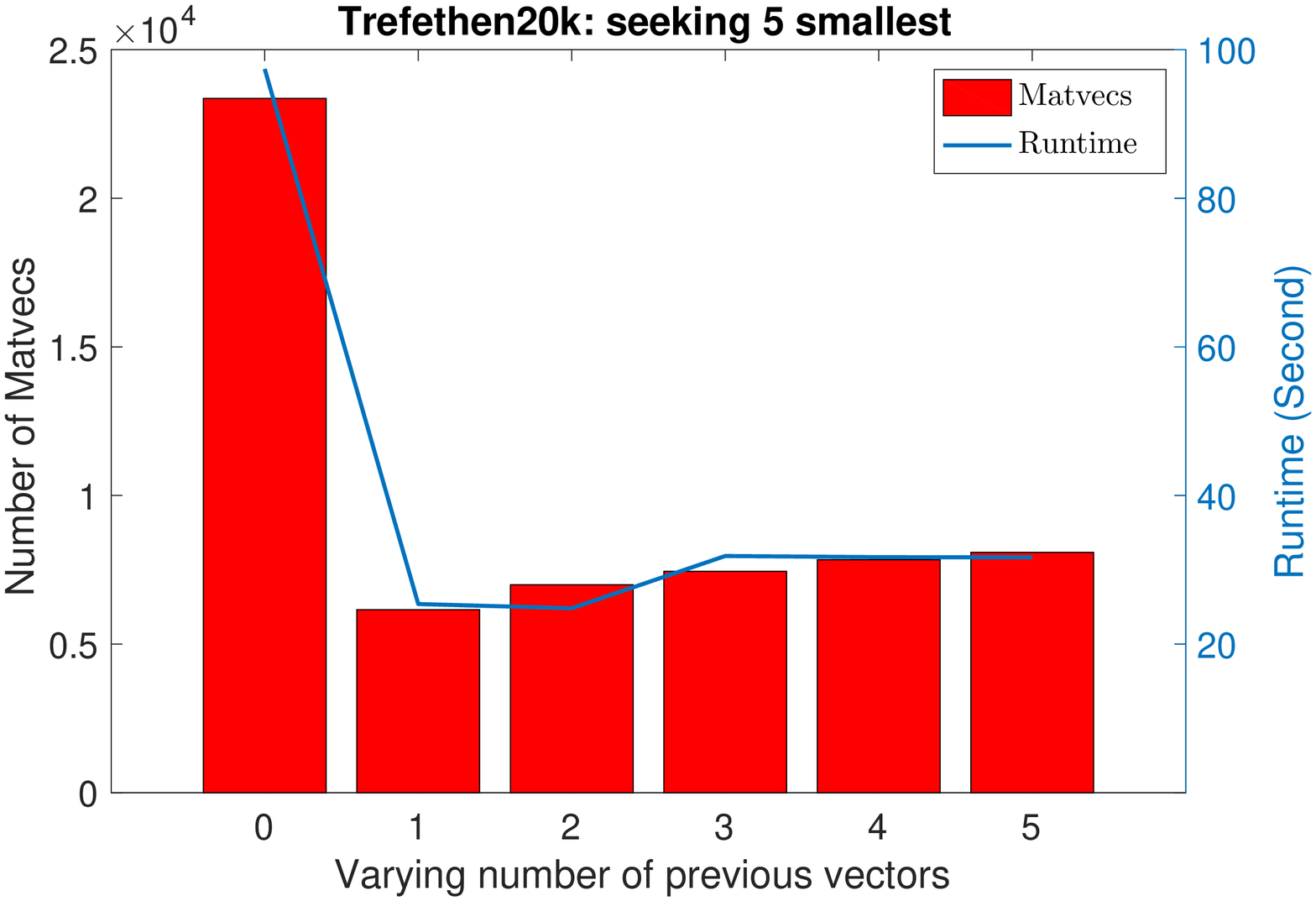}
      \caption{Trefethen\_20k}
      \label{fig:Trefethen20k_varyingmaxPsize}
     \end{subfigure}
    \caption{The effect of varying the number of previous vectors $l$ on the convergence of TRPL+K when seeking $p= 5$ smallest eigenvalues without preconditioning.}     
    \label{fig:seeking5_varyingmaxPsize}
\end{figure}

\subsection{Without preconditioning}
We compare against unrestarted Lanczos and other methods without a preconditioner
to show the near-optimal performance of TRPL+K under limited memory. Figure \ref{fig:seeking1_no_pre} shows the results on the easy case finan512 and on the hard case cfd1, seeking $p=1$ eigenvalue. 
For both cases TRPL+K and GD+K can achieve almost identical performance as unrestarted Lanczos, which is consistent with Theorem \ref{globalopt}.
GD+K requires slightly fewer iterations because its new directions come from the most recent RR over the entire space, although the cost per iteration is higher.
Compared to LOBPCG and EigIFP, TRPL+K is significantly faster because it uses thick restarting and a Krylov subspace (compared to LOBPCG) and because it uses thick and locally optimal restarting (compared to EigIFP).
The difference between TRPL+K and TRLan is relatively small for the easy case, but becomes significant with hard problems.
Figure \ref{fig:seeking5_no_pre} shows similar results when computing 5 eigenvalues. To make it easier to see, we only show graphs for TRPL+K, GD+K, and TRLan. As before, TRPL+K and GD+K are competitive and both of them substantially outperform the performance of TRLan.

\begin{figure}[ht!]
  \centering
  \begin{subfigure}[b]{0.49\textwidth}
      \includegraphics[width=\textwidth]{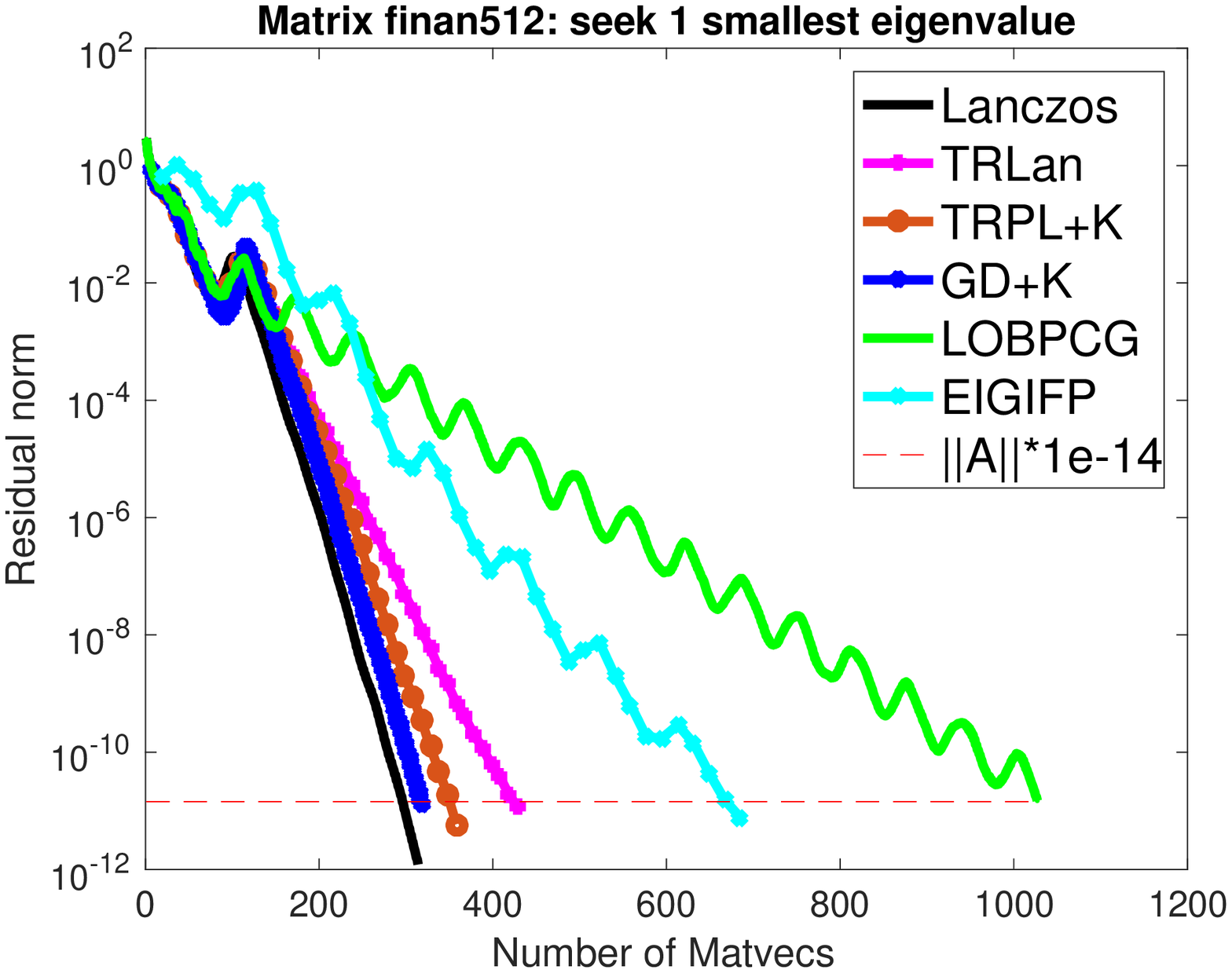}
      \caption{Matrix finan512}
      \label{fig:finan512_seeking1_no_pre}
    \end{subfigure}
    \begin{subfigure}[b]{0.49\textwidth}
      \includegraphics[width=\textwidth]{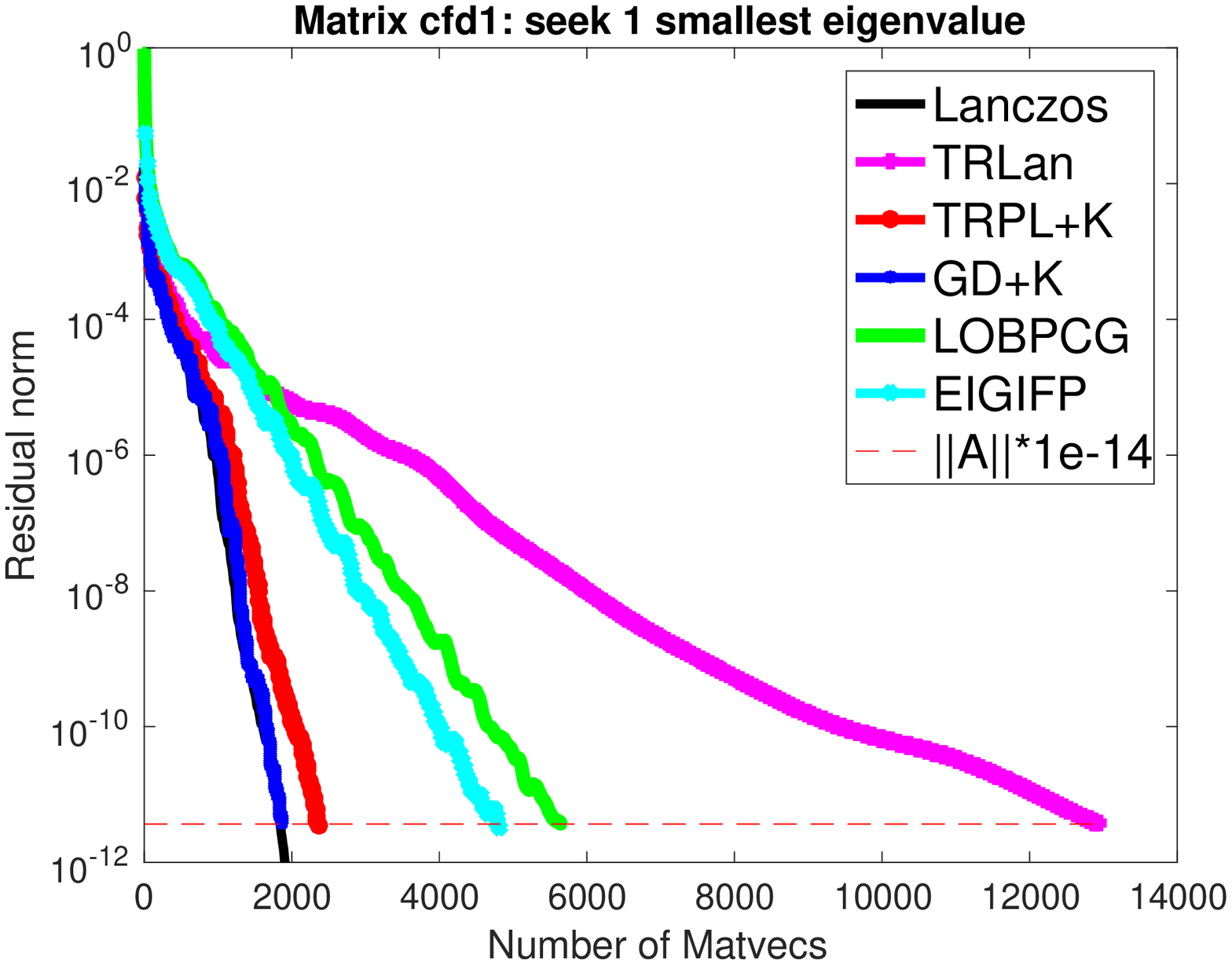}
      \caption{Matrix cfd1}
      \label{fig:cfd1_seeking1_no_pre}
     \end{subfigure}
    \caption{Compare all methods to the optimal convergence of unrestarted Lanczos when seek 1 smallest eigenvalue without preconditioning}     
    \label{fig:seeking1_no_pre}
\end{figure}

\begin{figure}[t]
  \centering
  \begin{subfigure}[b]{0.49\textwidth}
      \includegraphics[width=\textwidth]{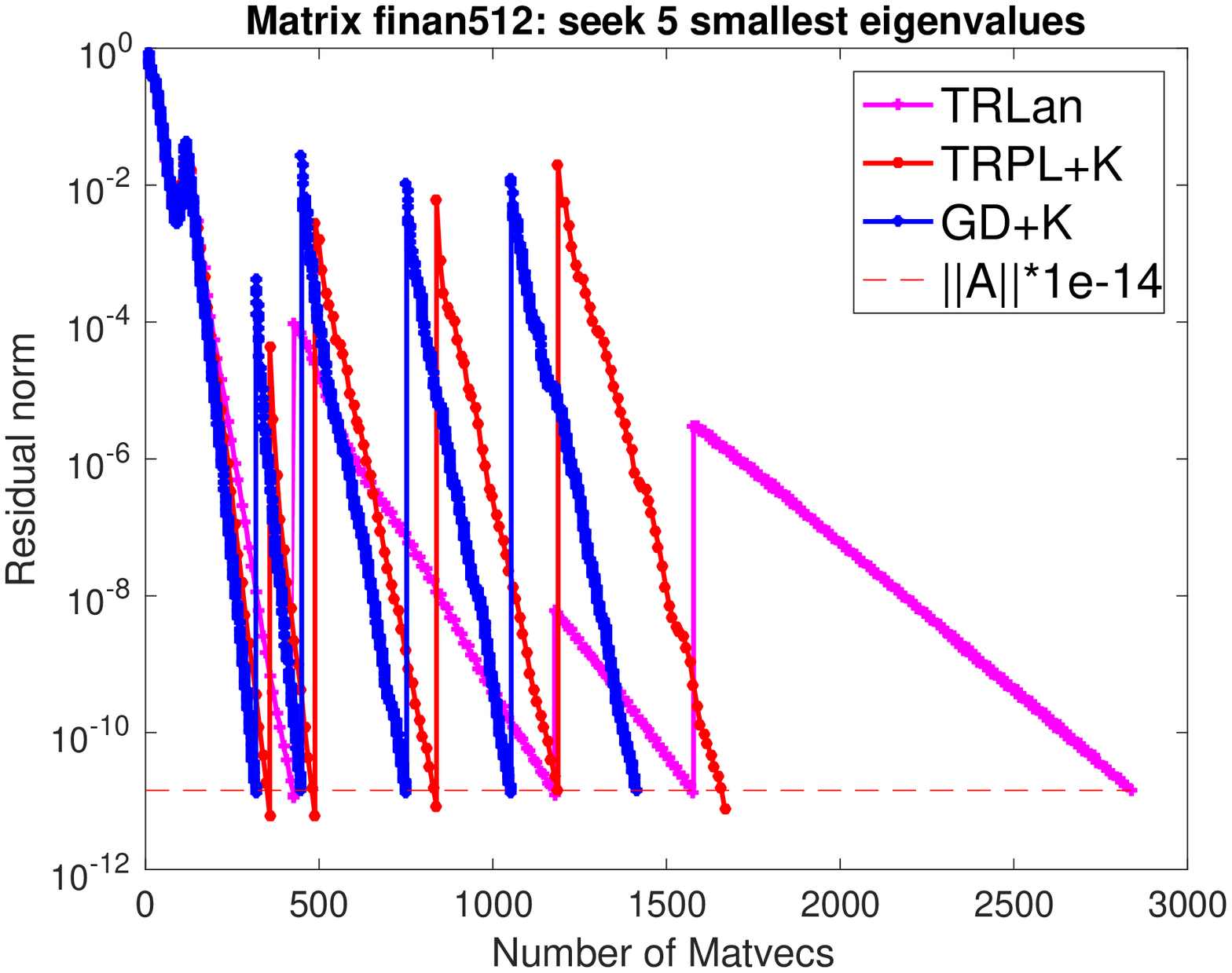}
      \caption{Matrix finan512}
      \label{fig:finan512_seeking5_no_pre}
    \end{subfigure}
    \begin{subfigure}[b]{0.49\textwidth}
      \includegraphics[width=\textwidth]{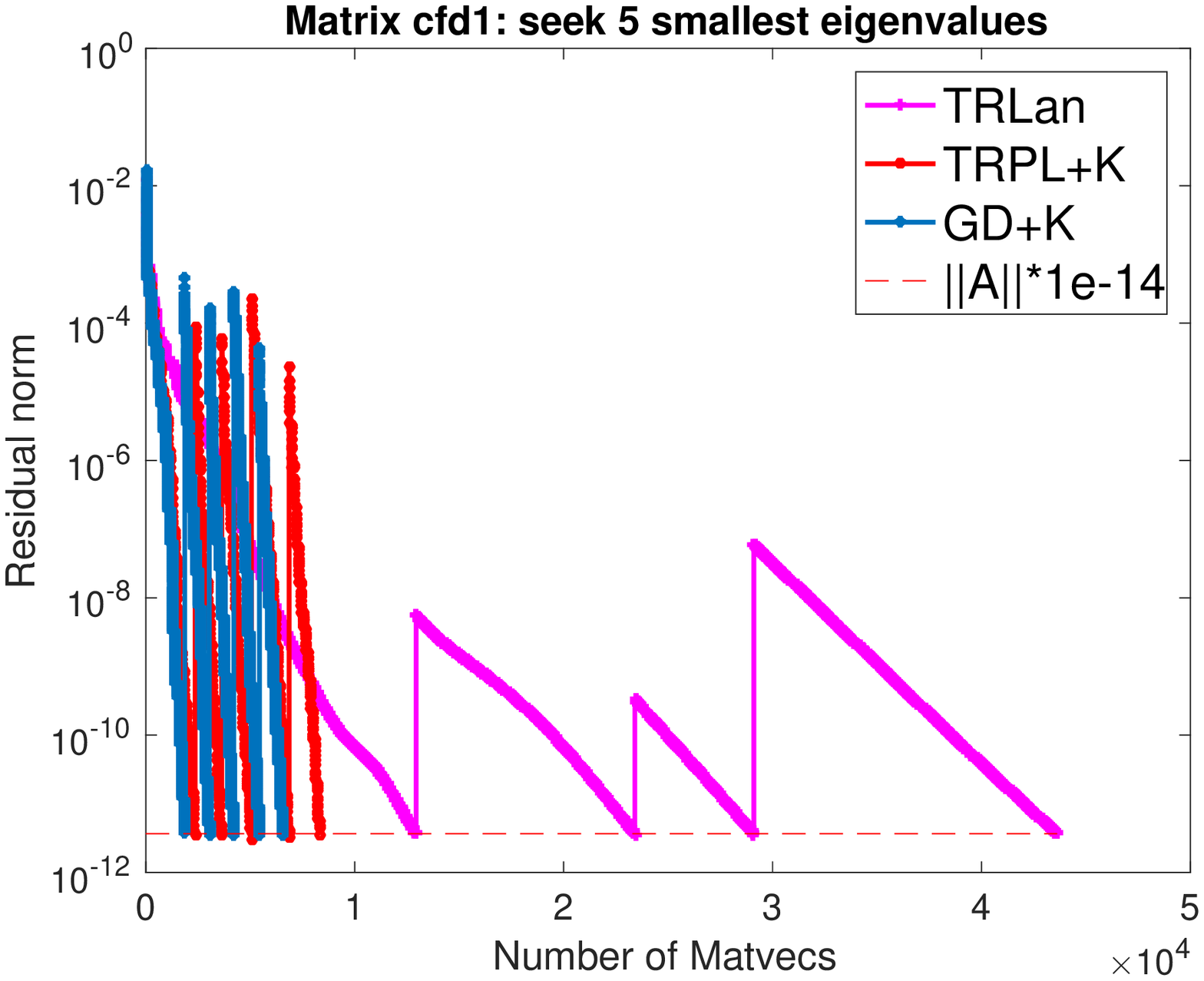}
      \caption{Matrix cfd1}
      \label{fig:cfd1_seeking5_no_pre}
     \end{subfigure}
    \caption{Compare TRPL+K and GD+K to the optimal convergence of unrestarted Lanczos when seek 5 smallest eigenvalue without preconditioning}     
    \label{fig:seeking5_no_pre}
\end{figure}

Tables \ref{tb:comp_allmethods_seek1_wo_precond} and \ref{tb:comp_allmethods_seek5_wo_precond} summarize results from all methods
seeking 1 and 5 eigenvalues respectively.
TRPL+K converges faster than all other methods in terms of matrix-vector products except for GD+K. In fact, the harder the problem the more significant the gains. It is also interesting to observe the differences between the methods which  are due mainly to the use of different algorithmic components. Among them, the locally optimal restarting has the biggest effect, followed by a larger Krylov space, while a combination of all three components in TRPL+K (and GD+K) are clearly the most beneficial while being computationally economical.
Also, TRPL+K requires about 20\% more matrix-vector products than GD+K but it is 35\% faster in runtime, making it the method of choice when the matrix-vector operator is inexpensive. This is because TRPL+K only needs to perform RR procedure once in one outer-iteration while the GD+K has to apply quite number ($m$ times) of RR in every outer-iteration. 

%\Red{AS: Is there a reason why we have not compared against JDQMR?}
%\Blue{LW: JD type methods are not closely related to TRPL+K so it is excluded from %the experiments.}
%\Red{AS: I do not think that is a valid reason but at this point I go along}

\begin{table}[htbp]
\centering
\caption{Comparing all methods for 1 smallest eigenvalue without preconditioning.}
\label{tb:comp_allmethods_seek1_wo_precond}
\footnotesize
\newcommand{\Bd}[1]{\textbf{#1}}
\newcommand{\Em}[1]{\emph{#1}}
	\begin{tabular}{ l rr rr rr rr rr }
	\hline
	\multicolumn{1}{r}{Method:}
	 & \multicolumn{2}{c}{{\tt TRPL+K}}
	 & \multicolumn{2}{c}{{\tt TRLan}}   
	 & \multicolumn{2}{c}{{\tt GD+K}} 
	 & \multicolumn{2}{c}{{\tt LOBPCG}} 
	 & \multicolumn{2}{c}{{\tt EIGIFP}} \\  \hline	 
	 Matrix & MV & Sec & MV & Sec & MV & Sec & MV & Sec & MV & Sec \\ \hline
	1138BUS & \Em{4778} & \Bd{0.9} & 48718 & 8.4 & \Bd{3662} & 1.8 & 15498 & 6.4 & 11107 & \Em{1.2} \\
	or56f & \Em{4908} & \Bd{20.2} & 64058 & 257.5 & \Bd{4024} & \Em{21.5} & 20365 & 82.1 & 19117 & 72.3 \\ 
	Tre20k & \Em{2208} & \Bd{7.1} & 12718 & 36.4 & \Bd{2058} & 10.3 & 3482 & 8.6 & 3799 & \Em{8.0} \\ 
	PlaA0 & \Em{638} & \Bd{3.5} & 1168 & 5.5 & \Bd{556} & 6.4 & 1583 & 5.4 & 1207 & \Em{3.9} \\ 
	cfd1 & \Em{2358} & \Bd{21.7} & 13008 & 106.7 & \Bd{1970} & 44.9 & 5647 & 33.7 & 4807 & \Em{29.5} \\ 
	finan512 & \Em{358} & 6.0 & 428 & 7.3 & \Bd{322} & 6.7 & 1028 & \Em{4.8} & 685 & \Bd{3.3}   \\  \hline	
    \end{tabular}
\end{table} 

\begin{table}[htbp]
\centering
\caption{Comparing all methods for 5 smallest eigenvalue without preconditioning.}
\label{tb:comp_allmethods_seek5_wo_precond}
\scriptsize
\newcommand{\Bd}[1]{\textbf{#1}}
\newcommand{\Em}[1]{\emph{#1}}
	\begin{tabular}{ l rr rr rr rr rr }
	\hline
	\multicolumn{1}{r}{Method:}
	 & \multicolumn{2}{c}{{\tt TRPL+K}}
	 & \multicolumn{2}{c}{{\tt TRLan}}   
	 & \multicolumn{2}{c}{{\tt GD+K}} 
	 & \multicolumn{2}{c}{{\tt LOBPCG}} 
	 & \multicolumn{2}{c}{{\tt EIGIFP}} \\  \hline	 
	 Matrix & MV & Sec & MV & Sec & MV & Sec & MV & Sec & MV & Sec \\ \hline
	 1138BUS & \Em{26888} & \Bd{5.1} & 414518 & 74.9 & \Bd{21318} & 11.2 & 374915 & 56.1 & 68729 & \Em{7.9} \\ 
	 or56f & \Em{13218} & \Bd{55.4} & 115098 & 507.7 & \Bd{11169} & \Em{61.2} & 40940 & 65.5 & 68801 & 260.6 \\ 
	 Tre20k & \Em{6158} & \Bd{18.1} & 23358 & 65.8 & \Bd{5520} & 35.4 & 14335 & \Em{21.8} & 25277 & 55.0 \\ 
	 PlaA0 & \Em{1858} & \Bd{10.0} & 2728 & \Em{12.7} & \Bd{1812} & 23.3 & 12545 & 22.1 & 4235 & 14.3 \\ 
	 cfd1 & \Em{8558} & \Bd{85.8} & 43648 & 386.6 & \Bd{7747} & 212.5 & 70170 & 458.6 & 20993 & \Em{132.3} \\
	 finan512 & \Em{1668} & \Em{27.2} & 2838 & 44.3 & \Bd{1501} & 43.5 & 8830 & 35.2 & 3551 & \Bd{18.2} \\ \hline
	 \end{tabular}
\end{table}

\subsection{With preconditioning}
%    \Red{AS: How do you guarantee that LU is symmetric? Is it important to EigIFP? To TRPL+K? It is not to GD+K.} \Blue{LF: Once preconditioning is used, T is almost a full matrix. With Arnoldi (used in our code and eigifp), ilu is fine to use.}
The above results emphasize the need for preconditioning, especially for problems with highly clustered eigenvalues. In this case TRLan cannot be used. In our experiments, we use MATLAB's ILU factorization on $A$ with parameters {\tt `type\,=\,nofill'}. We then compare TRPL+k against other methods with the constructed preconditioner for finding 1 and 5 smallest eigenvalues. The results are shown in Tables \ref{tb:comp_allmethods_seek1_precond} and \ref{tb:comp_allmethods_seek5_precond} respectively. 
TRPL+K is again the fastest or close to the fastest method. We note that with a good preconditioner the number of iterations decreases and thus the differences between methods are smaller. We also note that the added cost of the preconditioner per iteration is similar to having a more expensive operator which favors GD+K also in runtime.

\begin{table}[htbp]
\centering
\caption{Comparing all methods for 1 smallest eigenvalue with preconditioning.}
\label{tb:comp_allmethods_seek1_precond}
\small
\newcommand{\Bd}[1]{\textbf{#1}}
\newcommand{\Em}[1]{\emph{#1}}
	\begin{tabular}{ l rr rr rr rr}
	\hline
	\multicolumn{1}{r}{Method:}
	 & \multicolumn{2}{c}{{\tt TRPL+K}}
	 & \multicolumn{2}{c}{{\tt GD+K}} 
	 & \multicolumn{2}{c}{{\tt LOBPCG}} 
	 & \multicolumn{2}{c}{{\tt EIGIFP}} \\  \hline	 
	 Matrix & MV & Sec & MV & Sec & MV & Sec & MV & Sec \\ \hline
	 1138BUS & \Em{328} & \Bd{0.2} & \Bd{174} & \Bd{0.2} & 474 & \Em{0.3} & 397 & \Bd{0.2} \\
	 or56f & \Em{158} & \Em{1.5} & \Bd{81} & \Bd{0.8} & 335 & 2.7 & 325 & 2.7 \\
	 Tre20k & 38 & \Em{0.2} & \Em{17} & \Bd{0.1} & \Bd{10} & \Bd{0.1} & 55 & 0.3 \\
	 cfd1 & \Em{838} & 20.0 & \Bd{562} & \Bd{14.0} & 2519 & 26.5 & 1279 &  \Em{17.2} \\
	 finan512 & \Em{98} & 1.9 & \Bd{69} & 1.5 & 144 & \Bd{1.0} & 127 & \Em{1.4} \\ \hline
	 \end{tabular}
\end{table} 

\begin{table}[htbp]
\centering
\caption{Comparing all methods for 5 smallest eigenvalues with preconditioning.}
\label{tb:comp_allmethods_seek5_precond}
\small
\newcommand{\Bd}[1]{\textbf{#1}}
\newcommand{\Em}[1]{\emph{#1}}
    \begin{tabular}{ l rr rr rr rr}
	\hline
	\multicolumn{1}{r}{Method:}
	 & \multicolumn{2}{c}{{\tt TRPL+K}}
	 & \multicolumn{2}{c}{{\tt GD+K}} 
	 & \multicolumn{2}{c}{{\tt LOBPCG}} 
	 & \multicolumn{2}{c}{{\tt EIGIFP}} \\  \hline	 
	 Matrix & MV & Sec & MV & Sec & MV & Sec & MV & Sec \\ \hline
	 1138BUS & \Em{1128} & \Bd{0.3} & \Bd{779} & \Em{0.5} & 2120 & \Em{0.5} & 2795 & 0.7 \\
	 or56f & \Em{368} & 3.3 & \Bd{223} & \Em{2.4} & 375 & \Bd{2.3} & 1175 & 10.1 \\
	 Tre20k & 118 & 0.7 & \Bd{45} & \Em{0.4} & \Em{115} & \Bd{0.3} & 203 & 1.0 \\
	 cfd1 & \Em{2598} & \Em{67.1} & \Bd{1868} & \Bd{56.7} & 12900 & 89.7 & 5135 & 72.8 \\
	 finan512 & \Em{378} & 8.6 & \Bd{289} & 9.0 & 1225 & \Bd{5.2} & 779 & \Em{8.5} \\ \hline
	 \end{tabular}
\end{table} 

\begin{comment}\Red{I noticed that as the number of desired eigenpairs increases from 1 to 5, the \emph{relative} advantage of GD+k over TRPL+k decreases. Should we try 10 or even 20 eigenvalues as well for the standard eigenproblems, as we did for generalized ones? Could this finally makes TRPL+K fastest?}\Blue{AS: we could but I am not sure we have the space. What I think will happen for large p is that the two methods will reach a constant ratio which is smaller than the large ratios we see for p = 1 (which is helped by the fact that GD+1 is near optimal for 1 but then it repeats the same convergence for each subsequent one).  TRPL+1 may not be as "optimal" but it converges to many eigenvalues at good rate. I do not think we'll see a performance inversion for large p}.
\Magenta{LW: with preconditioning, the situation is quite trick. First of all, GD+K applies more frequent preconditioning which is expected to be more beneficial than incrementally increasing the subspace of TRPL+K. But it really depends on the effectiveness of a preconditioner. Since the effectiveness of the preconditioner becomes less effective when seeking more eigenvalues, the cost of applying preconditioner starts becoming more expensive. It explains why the advantages of GD+K over TRPL+K diminishes when seeking more. In general, when the cross line that will happen really depends but we can image eventually we should observe similar results with the cases without preconditioning. }
\end{comment}

\subsection{Generalized eigenvalue problem}

\begin{table}[ht!]
\centering
\caption{Properties of the test matrices for generalized eigenvalue problems.}
\label{tb:GeneralizedMatrix-info}
\small
\begin{center}
    \begin{tabular}{ c c c c c }  \hline
    Matrix & order & nnz(A) & $\kappa(A)$ & Source \\ \hline
    bcsstk23 & 3134 & 45178 & 6.9e+12 & MM \\ 
    bcsstm23 & 3134 & 3134 & 9.4e+08 & MM \\
    bcsstk24 & 3562 & 159910 & 6.3e+11 & MM \\ 
    bcsstm24 & 3562 & 3562 & 1.8e+13 & MM \\
    bcsstk25 & 15439 & 252241 & 1.2e+13 & MM \\ 
    bcsstm25 & 15439 & 15439 & 6.0e+09 & MM \\ \hline
    \end{tabular}
\end{center}
\end{table} 

We perform some sample experiments on generalized eigenvalue problems, comparing the proposed method against LOBPCG and EigIFP. As shown in Table \ref{tb:GeneralizedMatrix-info}, the condition numbers of these problems are quite large, making preconditioning necessary to accelerate the convergence. In this experiment, we use Incomplete LDL factorization \cite{SYM-ILDL} on $A$ with {\tt droptol = 1e-6, 1e-8} for bcsstkm23 and bcsstkm24 respectively, and MATLAB's LDL factorization on $A$ with parameter {\tt THRESH = 0.5} for bcsstkm25. We then use the preconditioner to find 1, 5, and 10 smallest eigenvalues. As shown in Table \ref{tb:comp_allmethods_seek1510_gen_precond}, TRPL+K significantly outperforms other methods in terms of the number of matrix-vector operations for these cases. Note that LOGPCG is quite efficient in terms of runtime, thanks to its efficiently implemented block operations in MATLAB, especially for small matrices that fit in cache. A block TRPL+K is also possible, which is left for future work. 
\begin{comment}

\Red{Interestingly, LOBPCG takes most time for bcsstkm25 when only 1 eigenvalue is sought, and least time if $10$ eigenvalues are wanted. \Blue{AS: For the timings, it is solely the effect of block operations being more effective. However, for BCS24 LOBPCG behavaes crazily for p = 5. Lingfei please check..} \Magenta{LW: lobpcg has convergence issue for seeking 5 on this matrix, where a long plateau is observed and I am not sure what caused it.} Meanwhile, our TRPL+k takes essentially the same cost for bcsstkm25 no matter if $1,5$ or $10$ eigenvalues are desired. \Blue{AS: I do not think this is normal behavior for TRPL+1. Lingfei, can you double check that these results are correct}. EIGIFP behaves most `normally' for this problem. Do we have any intuitive explanation of this observation?}\Blue{I think the spectrum could reveal some info. Unfortunately, Table 5.6 gives condition numbers which is not as relevant... }
\Magenta{LW: yes, i have made some mistakes for bccstm25 and have corrected them. thanks for pointing out.}
\end{comment}

\begin{table}[htbp]
\centering
\caption{Comparing all methods for computing 1, 5, 10 smallest eigenvalues with preconditioning (numbers in parenthesis). Each pencil bcsstkmXY has matrix pairs (bcsstkXY, bcsstmXY), where XY $\in \{ 23,24, 25\}.$}
\label{tb:comp_allmethods_seek1510_gen_precond}
\small
\newcommand{\Bd}[1]{\textbf{#1}}
\newcommand{\Em}[1]{\emph{#1}}
    \begin{tabular}{ l rr rr rr}
	\hline
	\multicolumn{1}{r}{Method:}
	 & \multicolumn{2}{c}{{\tt TRPL+K}}
	 & \multicolumn{2}{c}{{\tt LOBPCG}} 
	 & \multicolumn{2}{c}{{\tt EIGIFP}} \\  \hline	 
	 Matrix & MV & Sec & MV & Sec & MV & Sec \\ \hline
	 bcsstkm23(1) & \Bd{60} & \Bd{0.1} & 134 & 0.2 & 127 & 0.2 \\
	 bcsstkm24(1) & \Bd{29} & \Bd{0.1} & 55 & \Bd{0.1} & 73 & 0.2\\
	 bcsstkm25(1) & \Bd{2816} & \Bd{29.1} & 28355 & 178.9 & 9721 & 66.2\\\hline 
	 bcsstkm23(5) & \Bd{150} & 0.4 & 190 & \Bd{0.3} & 419 & 0.7\\
	 bcsstkm24(5) & \Bd{120} & \Bd{0.6} & 1020 & 3.6 & 257 & 0.8  \\
	 bcsstkm25(5) & \Bd{3830} & \Bd{39.6} & 39895 & 89.1 & 14423 & 97.6\\\hline
	 bcsstkm23(10) & \Bd{210} & 0.4 & 240 & \Bd{0.2} & 820 & 1.0 \\
	 bcsstkm24(10) & \Bd{210} & 0.9 & 270 & \Bd{0.6} & 496 & 1.4 \\
	 bcsstkm25(10) & \Bd{5120} & \Bd{52.9} & 45820 & 63.7 & 20494 & 141.7 \\\hline
	 \end{tabular}
\end{table}

%%%%%%%%%%%%%%%%%%%%%%%%%%%%%%%%%%%%%%%%%%%%%%%%%%%%%%%%%%%%%%%%%%%
\section{Conclusions and future work}
\label{sec:conclusion}
We presented a new near-optimal eigenmethod, thick-restart preconditioned Lanczos +K method (TRPL+K), which is based on three key algorithmic components: thick restarting, locally optimal restarting, and the ability to build a preconditioned Krylov space. 
We provided a proof of an asymptotic global quasi-optimality of the proposed method and provided insights on the near-optimal performance of a group of methods that employ locally optimal restarting. Our extensive experiments demonstrate that TRPL+K either outperforms or matches other state-of-the-art methods in terms of both number of matrix-vector operations and computational time. 

An interesting future direction is to extend this approach to the Lanczos Bidiagonalization method for singular value problems.

%%%%%%%%%%%%%%%%%%%%%%%%%%%%%%%%%%%%%%%%%%%%%%%%%%%%%%%%%%%%%%%%%%%
\section*{Acknowledgment}
The authors are indebted to the referees for their meticulous reading and constructive comments. This work is supported by NSF under grant No. ACI SI2-SSE 1440700, DMS-1719461, and by DOE under a grant No. DE-FC02-12ER41890.

%\clearpage
\bibliographystyle{siam}
\bibliography{trpl_k}

%%%%%%%%%%%%%%%%%%%%%%%%%%%%%%%%%%%%%%%%%%%%%%%%%%%%%%%%%%%%%%%%%%%
%\clearpage
\section{Appendix A: }

\medskip

{\bf Proof of Lemma \ref{lemmaoptalpha}.}
\begin{proof}By definition, the optimal step size $\alpha^*$ is
\begin{eqnarray}
\alpha^* = \arg\min_{\alpha > 0} \rho(x+\alpha p)= \arg\min_{\alpha > 0} \frac{(x+\alpha p)^T A (x+\alpha p)}{(x+\alpha p)^T B (x+ \alpha p)},
\end{eqnarray}
which can be found by letting $\frac{d}{d \alpha}\frac{1}{2}\rho(x+\alpha p)=0$. Using the quotient rule for differentiation, with some algebraic work, we have
\begin{eqnarray}
&&\frac{d}{d\alpha}\frac{\rho(x+\alpha p)}{2} = \frac{a(x,p)\alpha^2+b(x,p)\alpha + c(x,p)}{[(x+\alpha p)^TB(x+\alpha p)]^2}, \\ \nonumber
\end{eqnarray}
where $a(x,p)$, $b(x,p)$ and $c(x,p)$ are given in \eqref{coefabc}. For the sake of simplicity, we refer to these coefficients as $a, b$ and $c$, respectively, when there is no danger of confusion. Depending on the sign of $a$, there are three cases for the solutions of $\frac{d}{d \alpha} \rho(x+\alpha p) = 0$:
\begin{enumerate}
\item $a = 0$. Obviously, there is a unique solution $\alpha^* = -\dfrac{c}{b}>0$. 
\item $a > 0$. Since $c < 0$, there is a unique positive root $\alpha^* = \frac{1}{2a}\left(-b+\sqrt{b^2-4ac}\right)$. By assumption, $b \geq \|x\|^2_B \|p\|^2_B \delta > 0$, and $|c| \leq \|x\|^2_B \|p\| \|Ax - \rho(x)Bx\| = \mathcal{O}(\sin\theta)$ for sufficiently small $\theta$. By the Taylor expansion of $\sqrt{1-t}$ for $|t| < 1$, $$\alpha^* = \frac{1}{2a}\left(-b+b\sqrt{1-\frac{4ac}{b^2}}\right)=-\frac{c}{b}\left(1+\frac{ac}{b^2}+\frac{2a^2c^2}{b^4} +\cdots \right),$$which is slightly smaller than $\alpha^*$ in case 1 because $a>0$ and $c<0$.
\item $a < 0$. In this case, there must be two distinct solutions $0 < \alpha_1^* < \alpha_2^*$ such that $\frac{d}{d \alpha} \rho(x+\alpha p) < 0$ for $0 \leq \alpha < \alpha_1^*$ or $\alpha > \alpha_2^*$, and $\frac{d}{d \alpha} \rho(x+\alpha p) > 0$ for $\alpha_1^* < \alpha < \alpha_2^*$. In fact, if there is no solution or only one repeated solution, then $\frac{d}{d \alpha}\rho(x+\alpha p) \leq 0$ for all $\alpha \geq 0$, and hence $\rho(p) = \lim_{\alpha \rightarrow +\infty} \rho(x+\alpha p) \leq \rho(x)$, contradicting our assumption that $\rho(p) - \rho(x) \geq \delta > 0$. Given these intervals of monotonicity, $\rho(x+\alpha_2^* p) > \lim_{\alpha \rightarrow +\infty} \rho(x+\alpha p) = \rho(p) \geq \rho(x) + \delta$. Hence the minimizer of $\rho(x+\alpha p)$ is achieved at $\alpha_1^*$ with same expression as in case 2. The optimal step size is slightly greater than $\alpha^*=-c/b$ in case 1 as $a,c<0$. We still refer to $\alpha_1^*$ as $\alpha^*$ for notation consistency.
\end{enumerate}

In summary, $\rho(x+\alpha p)$ decreases monotonically on $(0,\alpha^*)$, then increases on $(\alpha^*,+\infty)$ (case 1 and 2) or on $(\alpha^*,\alpha_2^*)$ (case 3). The optimal $\alpha^*$ has a closed form. \qed
\end{proof}

\medskip
 {\bf Proof of Lemma \ref{rhodecrease}.}
\begin{proof}
We note that the denominator of $\frac{d}{d \alpha} \rho(x+\alpha p)$, namely, $(x+\alpha p)^TB(x+\alpha p) = \|p\|^2_B\alpha^2+2p^TBx \alpha + \|x\|^2_B > 0$ for all $\alpha$, and it is a quadratic with positive quadratic term coefficient. Hence, $\max_{0\leq \alpha \leq \alpha^*}(x+\alpha p)^TB(x+\alpha p) = \max_{\alpha =\{0, \alpha^*\}} (x+\alpha p)^TB(x+\alpha p) = \max\{\|x\|^2_B,\|x+\alpha^* p\|^2_B\}$, where $\alpha^*=-\frac{c}{b}$ or $-\frac{c}{b}(1+\frac{ac}{b^2}+\ldots)$; see Lemma \ref{lemmaoptalpha}. For sufficiently small $\theta = \angle(x,v_1)_B$, since $|c| \leq \mathcal{O}(\sin\theta)$, there is a small constant $\eta \in (0,1)$ independent of $\theta$, such that
\begin{eqnarray}\nonumber
&&\rho(x+\alpha^*p) -\rho(x) = \int_{0}^{\alpha^*}\frac{d}{d \alpha}\rho(x+\alpha p) d \alpha = \int_{0}^{\alpha^*}\frac{a\alpha^2+b\alpha+c}{(x+\alpha p)^TB(x+\alpha p)}d\alpha \\ \nonumber
&\leq& \frac{1}{\max\{\|x\|^2_B,\|x+\alpha^* p\|^2_B\}}\int_{0}^{\alpha^*}(a\alpha^2+b\alpha+c) d\alpha = \frac{\frac{1}{3}a(\alpha^*)^3+\frac{1}{2}b(\alpha^*)^2+c\alpha^*}{\max\{\|x\|^2_B,\|x+\alpha^* p\|^2_B\}} \\ \nonumber
&=& \frac{-\frac{c^2}{2b}-\frac{ac^3}{3b^3}+\mathcal{O}(c^4)}{\max\{\|x\|^2_B,\|x+\alpha^* p\|^2_B\}} \leq -\frac{c^2(1-\eta)}{2b\max\{\|x\|^2_B,\|x+\alpha^* p\|^2_B\}},
\end{eqnarray}
where $b \geq \|x\|_B^2 \|p\|_B^2 \delta > 0$ is bounded away from zero; see \eqref{coefabc}.

To establish the lower bound, note that our assumption that $\rho(p)-\rho(x) \geq \delta > 0$ for a constant $\delta$ independent of $\theta = \angle(x,v_1)_B$ means that $p$ is bounded away from $x$ in direction. Therefore, $\|x+\alpha^*p\|_B$ is bounded away from zero. Similarly, for sufficiently small $\theta$, we also have
\begin{eqnarray}
&&\rho(x+\alpha^*p) -\rho(x) \geq  \frac{1}{\min\{\|x\|^2_B,\|x+\alpha^* p\|^2_B\}}\int_{0}^{\alpha^*}(a\alpha^2+b\alpha+c) d\alpha \\ \nonumber
&=& \frac{-\frac{c^2}{2b}-\frac{ac^3}{3b^3}+\mathcal{O}(c^4)}{\min\{\|x\|^2_B,\|x+\alpha^* p\|^2_B\}} \geq -\frac{c^2(1+\eta)}{\min\{\|x\|^2_B,\|x+\alpha^*p\|^2_B\}},
\end{eqnarray}
where $c = \|x\|_B^2 p^T(Ax-\rho(x)Bx) = \mathcal{O}(\sin\theta)\mathcal{O}\big(\cos \angle(p,\nabla \rho(x))\big)$ from \eqref{coefabc}. \qed
\end{proof}\medskip

 {\bf Proof of Lemma \ref{xjxk_lemma1}}
\begin{proof}
First, note that one can always choose $\beta_{jk}, \gamma_{jk}>0$ in \eqref{xjxkeqn}. If a decomposition has negative $\beta_{jk}$ or $\gamma_{jk}$, simply replace $x_j$ with $-x_j$ or $g_{jk}$ with $-g_{jk}$.

Given the decomposition \eqref{xjxkeqn}, the normalization condition $\|x_k\|_B=1$ leads to 
\begin{eqnarray}\nonumber
\hspace{-0.3in}&&\|x_k\|^2_B = (\beta_{jk}x_j+\gamma_{jk}g_{jk})^TB(\beta_{jk}x_j+\gamma_{jk}g_{jk}) \\ \label{unitxk}
\hspace{-0.3in}&=& \|x_j\|^2_B \beta_{jk}^2+2(g_{jk}^TBx_j)\gamma_{jk}\beta_{jk}+\|g_{jk}\|^2_B \gamma^2_{jk} = \beta_{jk}^2+2\mu_{jk} \gamma_{jk}\beta_{jk}+\gamma^2_{jk}\\ \nonumber
\hspace{-0.3in} &=&  (\gamma_{jk}+\mu_{jk}\beta_{jk})^2+(1-\mu_{jk}^2)\beta_{jk}^2 = 1,
\end{eqnarray}
where $\mu_{jk} = g_{jk}^TBx_j = (x_j,g_{jk})_B \in [-1,1]$ because $\|x_j\|_B=\|g_{jk}\|_B=1$. 

Let $r_j = Ax_j - \rho(x_j)Bx_j$ be the eigenresidual associated with $x_j$, and define $d_{jk} \equiv \rho(g_{jk})-\rho(x_j)$. The Rayleigh quotient of $x_k$ is therefore
\begin{eqnarray}\label{rhoxkopt}
&&\rho(x_k) = \frac{x_k^TAx_k}{x_k^TBx_k}=x_k^TAx_k=(\beta_{jk}x_j+\gamma_{jk}g_{jk})^TA(\beta_{jk}x_j+\gamma_{jk}g_{jk}) \\ \nonumber
&=& (x_j^TAx_j)\beta^2_{jk}+2(g_{jk}^TAx_j)\gamma_{jk}\beta_{jk}+(g_{jk}^TAg_{jk})\gamma^2_{jk} \\ \nonumber
&=& \rho(x_j)\beta^2_{jk}+2[g_{jk}^T(r_j+\rho(x_j)Bx_j)]\gamma_{jk}\beta_{jk}+\rho(g_{jk})\gamma^2_{jk} \quad (\mbox{as }Ax_j = r_j+\rho(x_j)Bx_j)\\ \nonumber
&=& \rho(x_j)\beta^2_{jk}+2\rho(x_j)(g_{jk}^T Bx_j)\gamma_{jk}\beta_{jk} + 2(g_{jk}^Tr_j)\gamma_{jk}\beta_{jk} + (\rho(x_j)+d_{jk})\gamma^2_{jk} \\ \nonumber
&=& \rho(x_j)\left(\beta^2_{jk}+2\mu_{jk}\gamma_{jk}\beta_{jk}+\gamma^2_{jk}\right)+2(g_{jk}^Tr_j)\gamma_{jk}\beta_{jk}+d_{jk}\gamma^2_{jk} \\ \nonumber
&=& \rho(x_j)+2(g_{jk}^Tr_j)\gamma_{jk}\beta_{jk}+d_{jk}\gamma^2_{jk}, \qquad\qquad\quad\, \mbox{ (see \eqref{unitxk})}
\end{eqnarray}
or, equivalently, $$2(g_{jk}^Tr_j)\gamma_{jk}\beta_{jk}+d_{jk}\gamma^2_{jk} = \rho(x_k)-\rho(x_j)=\left(\rho(x_k)-\lambda_1\right)-\left(\rho(x_j)-\lambda_1\right).$$ Using Assumption \ref{assumptionlincvg}, it follows that
$$- (1-\underline{\xi}^{k-j})(\rho(x_j)-\lambda_1) \leq 2(g_{jk}^Tr_j)\gamma_{jk}\beta_{jk}+d_{jk}\gamma^2_{jk} \leq - (1-\bar{\xi}^{k-j})(\rho(x_j)-\lambda_1).$$ From \eqref{rqerror}, we divide the above inequality by $\sin^2\theta_j$ and have 
{\footnotesize
\begin{eqnarray}\label{doublebd}
\hspace{-0.3in}- (1-\underline{\xi}^{k-j})(\rho(f_j)-\lambda_1) \leq 2\frac{g_{jk}^Tr_j}{\sin\theta_j}\left(\frac{\gamma_{jk}}{\sin\theta_j}\right)\beta_{jk}+d_{jk}\left(\frac{\gamma_{jk}}{\sin\theta_j}\right)^2  \leq - (1-\bar{\xi}^{k-j})(\rho(f_j)-\lambda_1).
\end{eqnarray}
}Note that the left-hand side and the right-hand side of the above inequality are both on the order of $\mathcal{O}(1)$, bounded away from zero and independent of $\theta_j$. Also note that $\frac{|g_{jk}^Tr_j|}{\sin\theta_j}\leq \frac{\|g_{jk}\|\|r_j\|}{\sin\theta_j} \leq (1+\delta)\|g_{jk}\|\|Af_j-\rho(x_j)Bf_j\| = \mathcal{O}(1)$ from \eqref{eigresbd}. In addition, since $g_{jk} \in \mathrm{span}\{p_{j-1},g_j,\ldots,g_{k-1}\} \subset W_k$, under Assumption \ref{anglev1Wk}, there is a $d>0$ such that $d_{jk} = \rho(g_{jk})-\rho(x_j) \geq d > 0$ for all $1\leq j < k$. Consequently, letting $\theta_j \rightarrow 0$ in \eqref{doublebd}, we have $\gamma_{jk} = \mathcal{O}(\sin\theta_j)$\footnote{We emphasize here that $\gamma_{jk} \ne o(\sin\theta_j)$ and hence $\gamma_{jk}$ can be safely replaced with $\mathcal{O}(\sin\theta_j)$ anywhere appropriate.}.

From \eqref{unitxk}, $\beta_{jk}$ can be written in terms of $\gamma_{jk}$ as follows
{\small
\begin{eqnarray}\label{ajk}
\beta_{jk} = -\mu_{jk}\gamma_{jk}+\sqrt{1-\gamma^2_{jk}+\mu^2_{jk}\gamma^2_{jk}}=1-\mu_{jk}\gamma_{jk}-\frac{1}{2}(1-\mu^2_{jk})\gamma^2_{jk}+\mathcal{O}(\gamma^4_{jk}),
\end{eqnarray}
}which completes the proof. \qed
\end{proof}\medskip

{\bf Proof of Theorem \ref{conjoptext}}
\begin{proof}
Since $y = \beta_y x_0 + \gamma_y g_y$, $g_y \in W_k$, $\|y\|_B=\|g_y\|_B=1$ and $\rho(y) \leq \rho(x_0)$, from Lemma \ref{xjxk_lemma1}, we have $\gamma_y = \mathcal{O}(\sin\theta_0)$ and $|1-\beta_y|=\mathcal{O}(\sin\theta_0)$. Similarly, the iterate $x_k = \beta_{0k}x_0+\gamma_{0k}g_{0k}$ satisfies $\gamma_{0k} = \mathcal{O}(\sin\theta_0)$ and $|1-\beta_{0k}|=\mathcal{O}(\sin\theta_0)$. Note that $\|r_0\| = \mathcal{O}(\sin\theta_0)$, $0 \leq \rho(x_0)-\rho(y) \leq \rho(x_0)-\lambda_1 = \mathcal{O}(\sin^2\theta_0)$ and, similarly, $0 \leq \rho(x_0)-\rho(x_k) \leq \mathcal{O}(\sin^2\theta_0)$. It follows that 
\begin{eqnarray}\label{resdiff}
&&\qquad (A-\rho(y)B)y-(A-\rho(x_k)B)x_k \\ \nonumber
&=& (A-\rho(x_0)B)(\beta_y x_0+\gamma_y g_y)-(A-\rho(x_0)B)(\beta_{0k}x_0 + \gamma_{0k}g_{0k}) \\ \nonumber
&& \qquad + \left(\rho(x_0)-\rho(y)\right)By - \left(\rho(x_0)-\rho(x_k)\right)Bx_k \\ \nonumber
&=& (\beta_y-\beta_{0k})(A-\rho(x_0)B)x_0 + (A-\rho(x_0)B)(\gamma_y g_y - \gamma_{0k}g_{0k})+\mathcal{O}(\sin^2\theta_0) \\ \nonumber
&=& \left[(\beta_y-1)+(1-\beta_{0k})\right]r_0+(A-\rho(x_k)B)(\gamma_yg_y-\gamma_{0k}g_{0k}) \\ \nonumber
&& \qquad +\left(\rho(x_k)-\rho(x_0)\right)B(\gamma_y g_y - \gamma_{0k}g_{0k})+\mathcal{O}(\sin^2\theta_0) \\ \nonumber
&=& r_0\mathcal{O}(\sin\theta_0) + \gamma_y(A-\rho(x_k)B)g_y - \gamma_{0k}(A-\rho(x_k)B)g_{0k}+\mathcal{O}(\sin^3\theta_0)+\mathcal{O}(\sin^2\theta_0) \\ \nonumber
&=& \gamma_y (A-\rho(x_k)B)g_y -\gamma_{0k}(A-\rho(x_k)B)g_{0k}+\mathcal{O}(\sin^2\theta_0).
\end{eqnarray}

By assumption, $g_y,g_{0k}\in W_k = \mathrm{span}\{p_0,\ldots,p_{k-1}\}$ are approximately conjugate to $p_k$, such that $p_k^T(A-\rho(x_k)B)g_y = \mathcal{O}(\sin\theta_0)$ and $p_k^T(A-\rho(x_k)B)g_{0k}=\mathcal{O}(\sin\theta_0)$, since $\sin\theta_{\ell}=\mathcal{O}(\sin\theta_0)$ for $1 \leq \ell \leq k-1$, due to \eqref{assumptionlincvg_eqv}. It follows from \eqref{resdiff} that
\begin{eqnarray}\nonumber
&&p_k^T\left[(A-\rho(y)B)y-(A-\rho(x_k)B)x_k\right] \\ \nonumber
&=&\gamma_yp_k^T(A-\rho(x_k)B)g_y - \gamma_{0k}p_k^T(A-\rho(x_k)B)g_{0k}+\mathcal{O}(\sin^2\theta_0) \\ \nonumber
&=& \gamma_y \mathcal{O}(\sin\theta_0)  -\gamma_{0k}\mathcal{O}(\sin\theta_0)+\mathcal{O}(\sin^2\theta_0) = \mathcal{O}(\sin\theta^2_0),
\end{eqnarray}
which is a crucial observation for the rest of the proof, or equivalently, 
\begin{eqnarray}\label{pktres}
p_k^T(A-\rho(y)B)y = p_k^T(A-\rho(x_k)B)x_k + \mathcal{O}(\sin^2\theta_0).
\end{eqnarray}In addition, note that $x_k - y = (\beta_{0k}-\beta_y)x_0 + (\gamma_{0k}g_{0k}-\gamma_y g_y)=x_0\mathcal{O}(\sin\theta_0)+g_{0k}\mathcal{O}(\sin\theta_0)-g_y\mathcal{O}(\sin\theta_0)=\mathcal{O}(\sin\theta_0)$, and hence $By=Bx_k+\mathcal{O}(\sin\theta_0)$. 

Let $R_3(\rho(x),\alpha p)= \frac{1}{2}\rho(x+\alpha p)-\left[\frac{1}{2}\rho(x)+\alpha p^T\nabla \frac{1}{2}\rho(x)+\frac{1}{2}\alpha^2p^T\nabla^2\frac{1}{2}\rho(x) p\right]$ be the remainder of 2nd order Taylor expansion of $\frac{1}{2}\rho(x+\alpha p)$ at $x$ with $\|x\|_B=1$. Then,
{\footnotesize
\begin{eqnarray}\label{taylorexprhoz}
\hspace{-1in}&&\qquad\quad \frac{1}{2}\rho(z) = \frac{1}{2}\rho(y+\alpha p_k)=\frac{1}{2}\rho(y)+\alpha p_k^T\frac{1}{2}\nabla \rho(y)+\frac{1}{2}\alpha^2p_k^T\frac{1}{2}\nabla^2\rho(y)p_k + R_3(\rho(y),\alpha p_k) \\ \nonumber
&=&\frac{1}{2}\rho(y)+\alpha p_k^T(A-\rho(y)B)y +\frac{1}{2}\alpha^2 p_k^T\big[(A-\rho(y)B)-2(Ay-\rho(y)By)(By)^T \\ \nonumber
&& \qquad\qquad \qquad\qquad\qquad\qquad\qquad\qquad
-2By(Ay-\rho(y)By)^T\big]p_k +R_3(\rho(y),\alpha p_k) \\ \nonumber
&=& \frac{1}{2}\rho(y)+\alpha \left[p_k^T(A-\rho(x_k)B)x_k+\mathcal{O}(\sin^2\theta_0)\right] +  \frac{1}{2}\alpha^2\Big\{p_k^T\left[(A-\rho(x_k)B)+(\rho(x_k)-\rho(y))B\right]p_k \\ \nonumber
&& \qquad\qquad -2\left[p_k^T(A-\rho(x_k)B)x_k+\mathcal{O}(\sin^2\theta_0)\right][x_k^TB+\mathcal{O}(\sin\theta_0)]p_k \\ \nonumber
&& \qquad\qquad -2p_k^T[Bx_k+\mathcal{O}(\sin\theta_0)]\left[x_k^T(A-\rho(x_k)B)p_k+\mathcal{O}(\sin^2\theta_0)\right]\!\Big\} + R_3(\rho(y),\alpha p_k)  \:\:\mbox{ (see \eqref{pktres})}\\ \nonumber
& = &  \frac{1}{2}\rho(y) + \alpha p_k^T\frac{1}{2}\nabla \rho(x_k)+\alpha \mathcal{O}(\sin^2\theta_0) + \frac{1}{2}\alpha^2 p_k^T\Big\{(A-\rho(x_k)B)-2r_k(Bx_k)^T-2(Bx_k)r_k^T\Big\}p_k  \\ \nonumber
&& \qquad\qquad + \alpha^2\left[\mathcal{O}(\sin^2\theta_0)+\|r_k\|\mathcal{O}(\sin\theta_0)\right] + R_3(\rho(y),\alpha p_k) \\ \nonumber
&=& \frac{1}{2}\rho(y)+\alpha p_k^T\frac{1}{2}\nabla \rho(x_k)+\frac{1}{2}\alpha^2 p_k^T\frac{1}{2}\nabla^2\rho(x_k)p_k +\alpha \mathcal{O}(\sin^2\theta_0)+ \alpha^2\mathcal{O}(\sin^2\theta_0)+R_3(\rho(y),\alpha p_k)\\ \nonumber
&=& \frac{1}{2}\rho(x_k)+\alpha p_k^T\frac{1}{2}\nabla \rho(x_k)+\frac{1}{2}\alpha^2 p_k^T\frac{1}{2}\nabla^2 \rho(x_k) p_k +R_3(\rho(x_k),\alpha p_k) \\ \nonumber
&& \qquad \qquad +\frac{1}{2}\left(\rho(y)-\rho(x_k)\right)-R_3(\rho(x_k),\alpha p_k)+\mathcal{O}(\alpha \sin^2\theta_0)+\mathcal{O}(\alpha^2 \sin^2\theta_0)+R_3(\rho(y),\alpha p_k) \\ \nonumber
&=& \frac{1}{2}\rho(x_k+\alpha p_k)+\frac{1}{2}\left(\rho(y)-\rho(x_k)\right)+R_3(\rho(y),\alpha p_k)-R_3(\rho(x_k),\alpha p_k) + \mathcal{O}(\alpha \sin^2\theta_0)+\mathcal{O}(\alpha^2\sin^2\theta_0).
\end{eqnarray}
}

Let the global minimizer in $U_{k+1}$ be $z^*=y(z^*)+\alpha(z^*)p_k$, with $y(z^*) \in U_k$ and $\|y(z^*)\|_B=1$. We note that $y(z^*)$ is generally \emph{not} the global minimizer in $U_k$. Consider the decomposition $y(z^*)=v_1 \cos\theta_{y(z^*)}+f_{y(z^*)}\sin\theta_{y(z^*)}$ with $f_{y(z^*)} \perp Bv_1$ and $\|f_{y(z^*)}\|_B=1$, such that $\rho(y(z^*))-\lambda_1 = \mathcal{O}(\sin^2\theta_{y(z^*)})$. Here, $\alpha(z^*)$ is the optimal step size moving from $y(z^*)$ in the direction of $p_k$, due to the global optimality of $z^*$ in $U_{k+1}$. It follows from Lemma \ref{rhodecrease} that $$\rho(z^*)-\rho(y(z^*)) \leq -\mathcal{O}(\sin^2\theta_{y(z^*)})\mathcal{O}\left(\cos^2 \angle (p_k,\nabla \rho(y(z^*)))\right),$$ where the coefficient of the $\sin^2\theta_{y(z^*)}$ term depends on the quantities $b$ and $c$ defined in \eqref{coefabc} involving the search direction $p_k$. On the other hand, as $z^*$ is the global minimizer in $U_{k+1}$, we have 
\begin{eqnarray}\label{rhozstarerr1}
\rho(z^*)-\lambda_1 \leq \rho(x_{k+1})-\lambda_1 =\mathcal{O}(\sin^2\theta_{k+1}).
\end{eqnarray}Meanwhile, by Lemma \ref{rhodecrease}, we also have
\begin{eqnarray}\label{rhozstarerr2}
&&\quad \rho(z^*)-\lambda_1 = \left[\rho(z^*)-\rho(y(z^*))\right]+\left[\rho(y(z^*))-\lambda_1\right]  \\ \nonumber
&=& -\mathcal{O}(\sin^2\theta_{y(z^*)})\mathcal{O}\left(\cos^2 \angle (p_k,\nabla \rho(y(z^*)))\right)+\mathcal{O}(\sin^2\theta_{y(z^*)}) = \mathcal{O}(\sin^2\theta_{y(z^*)}).
\end{eqnarray}
It follows that
\begin{eqnarray}\label{twothetas}
\sin\theta_{y(z^*)} \leq \mathcal{O}(\sin\theta_{k+1}).
\end{eqnarray} 

Given $z^*=y(z^*)+\alpha(z^*)p_k$, it follows from the triangle inequality that 
\begin{eqnarray}\label{triangleineq}
%1-|\alpha(z^*)|=\|y(z^*)\|_B - |\alpha(z^*)|\|p_k\|_B \leq 
\|z^*\|_B \leq \|y(z^*)\|_B+|\alpha(z^*)|\|p_k\|_B = 1+|\alpha(z^*)|.
\end{eqnarray}
Meanwhile, the $B$-normalized $z^*$ is $\frac{z^*}{\|z^*\|_B} = \frac{1}{\|z^*\|_B}y(z^*)+\frac{\alpha(z^*)}{\|z^*\|_B}p_k$, and we can follow the proof of Lemma \ref{xjxk_lemma1} to show that $\frac{|\alpha(z^*)|}{\|z^*\|_B}=\mathcal{O}(\sin\theta_{y(z^*)}) \leq \mathcal{O}(\sin\theta_{k+1})$ due to \eqref{twothetas}. From \eqref{triangleineq}, $\frac{|\alpha(z^*)|}{1+|\alpha(z^*)|}\leq \frac{|\alpha(z^*)|}{\|z^*\|_B} \leq \mathcal{O}(\sin\theta_{k+1})$, i.e., $|\alpha(z^*)|\leq \mathcal{O}(\sin\theta_{k+1})$. 

Let $\alpha_k^*$ be the minimizer of $\rho(x_k+\alpha p_k)$ and $y^*$ be the global minimizer in $U_k$ (note that $\alpha_k^* \ne \alpha(z^*)$ and $y^* \ne y(z^*)$), which contains all vectors of the form $y=\beta_y x_0 + \gamma_y g_y$ with $\|y\|_B=\|g_y\|_B=1$, $g_y \in W_k$ and $\rho(y) \leq \rho(x_0)$. Let $z = z^*=y(z^*)+\alpha(z^*)p_k$, $y = y(z^*)$ and $\alpha = \alpha(z^*)$ in \eqref{taylorexprhoz}, and note that $\rho(x_k+\alpha(z^*)p_k) \geq \rho(x_k+\alpha^*p_k)$ and $\rho(y(z^*)) \geq \rho(y^*)$. Therefore, we have
{\small
\begin{eqnarray}
&&\frac{1}{2}\rho(z^*)=\frac{1}{2}\rho(x_k+\alpha(z^*)p_k)+\frac{1}{2}\left(\rho(y(z^*))-\rho(x_k)\right)+R_3\left(\rho(y(z^*)),\alpha(z^*)p_k\right) \\ \nonumber
&& \qquad -R_3\left(\rho(x_k),\alpha(z^*)p_k\right)+\mathcal{O}(\alpha(z^*)\sin^2\theta_0)+\mathcal{O}(\alpha(z^*)^2\sin^2\theta_0) \\ \nonumber
&\geq & \frac{1}{2}\rho(x_k+\alpha_k^* p_k)+\frac{1}{2}\left(\rho(y^*)-\rho(x_k)\right)+R_3\left(\rho(y(z^*)),\alpha(z^*)p_k\right) \\ \nonumber
&& \qquad -R_3\left(\rho(x_k),\alpha(z^*)p_k\right)+\mathcal{O}(\alpha(z^*)\sin^2\theta_0)+\mathcal{O}(\alpha(z^*)^2\sin^2\theta_0) \\ \nonumber
& \geq & \frac{1}{2}\rho(x_{k+1})+\frac{1}{2}\left(\rho(y^*)-\rho(x_k)\right)+\mathcal{O}(\sin\theta_{k+1})\mathcal{O}(\sin^2\theta_0). \quad (|\alpha(z^*)|\leq \mathcal{O}(\sin\theta_{k+1}))
\end{eqnarray}
}In the last step above, $\rho(x_k+\alpha^*p_k) \geq \rho(x_{k+1})$ because $x_{k+1}$ is the minimizer over $\mathrm{span}\{x_k,g_k,p_{k-1}\}$, whereas $p_k \in \mathrm{span}\{g_k,p_{k-1}\}$. Equivalently, $$\rho(x_{k+1})-\rho(z^*) \leq \rho(x_k)-\rho(y^*)+\mathcal{O}(\sin\theta_{k+1})\mathcal{O}(\sin^2\theta_0).\qed$$
\end{proof}\medskip

{\bf Proof of Lemma \ref{apprxconjkm1k}}
\begin{proof}Recall that $\widetilde{p}_k = g_k - \frac{p_{k-1}^T(A-\rho(x_{k-1})B)g_k}{p_{k-1}^T(A-\rho(x_{k-1})B)p_{k-1}}p_{k-1}$. Therefore
{\small
\begin{eqnarray}\nonumber
&&p_{k-1}^T(A-\rho(x_{k-1})B)\widetilde{p}_k \\ \nonumber
&=& \textstyle p_{k-1}^T(A-\rho(x_{k-1})B)g_k - \frac{p_{k-1}^T(A-\rho(x_{k-1})B)g_k}{p_{k-1}^T(A-\rho(x_{k-1})B)p_{k-1}}p_{k-1}^T(A-\rho(x_{k-1})B)p_{k-1}=0,
\end{eqnarray}
}and hence $p_{k-1}^T(A-\rho(x_{k-1})B)p_k = 0$. Note that $\rho(x_{k-1})-\rho(x_k) = \left(\rho(x_{k-1})-\lambda_1\right)-\left(\rho(x_k)-\lambda_1\right) \leq (1-\bar{\xi})\left(\rho(x_{k-1})-\lambda_1\right) = \mathcal{O}(\sin^2\theta_{k-1})$, and it follows that
\begin{eqnarray}\nonumber
&&p_{k-1}^T(A-\rho(x_k)B)p_k = p_{k-1}^T(A-\rho(x_{k-1})B)p_k + \left(\rho(x_{k-1})-\rho(x_k)\right)p_{k-1}^TBp_k \\ \nonumber
&=& \left(\rho(x_{k-1})-\rho(x_k)\right)(p_{k-1},p_k)_B= \mathcal{O}(\sin^2\theta_{k-1}).\qed
\end{eqnarray}
\end{proof}\medskip

{\bf Proof of Lemma \ref{apprxconj02}}
\begin{proof}
At step 2 of Algorithm~\ref{alg:pl+1}, $x_2$ is extracted from $\mathrm{span}\{x_1,g_1,p_0\}$, where $p_0 = g_0=Md_0$ up to a scaling factor. By the local optimality, 
\begin{eqnarray}\label{r2r1r0}
r_2 \perp g_0\: \mbox{ and }\: r_2 \perp g_1.
\end{eqnarray}
At step~3, we form $\mathrm{span}\{x_2,g_2,p_1\}$ to extract $x_3$ and $p_2$. At step 7 of Algorithm~\ref{alg:pl+1}, $\widetilde{p}_2 = g_2 - \frac{p_1^T(A-\rho(x_{1})B)g_2}{p_1^T(A-\rho(x_1)B)p_1}p_1$, which shows that $\|\widetilde{p}_2\|$ is proportional to $\|g_2\|$ and independent of the scaling of $p_1$. Hence, the normalized search direction $p_2$ can be written as $p_2 = \frac{\eta_2}{\|g_2\|}\widetilde{p}_2$, where $\eta_2$ is chosen such that $\|p_2\|_B = 1$. From Lemma \ref{xjxk_lemma1}, we have $x_1 = \beta_{01}x_0+\gamma_{01}p_0$, i.e., $p_0 = \frac{1}{\gamma_{01}}(-\beta_{01}x_0+x_1)$, where $\gamma_{01} = \mathcal{O}(\sin\theta_0)$. Therefore
{\small
\begin{eqnarray}\nonumber
&&p_0^T(A-\rho(x_0)B)g_2=\frac{1}{\gamma_{01}}\left[-\beta_{01}x_0^T(A-\rho(x_0)B)+x_1^T(A-\rho(x_1)B)+\left(\rho(x_1)-\rho(x_0)\right)x_1^TB\right]g_2 \\ \nonumber
&&= \frac{\rho(x_1)-\rho(x_0)}{\gamma_{01}}x_1^TBg_2+\frac{1}{\gamma_{01}}\left(-\beta_{01}r_0^T+r_1^T\right)g_2 =  \frac{(\rho(x_1)-\lambda_1)-(\rho(x_0)-\lambda_1)}{\gamma_{01}}x_1^TBg_2+ \\ \nonumber
&& \quad \frac{1}{\gamma_{01}}\left(-\beta_{01}r_0^TMp^{(2)}_{m-1}\left((A-\rho_2 B)M\right)r_2+r_1^TMp^{(2)}_{m-1}\left((A-\rho_2 B)M\right)r_2\right)  \\ \nonumber
&&= \frac{\mathcal{O}(\sin^2\theta_0)}{\mathcal{O}(\sin\theta_0)}x_1^TBg_2 + \frac{1}{\gamma_{01}}\Big\{-\beta_{01}r_0^T\left(p^{(0)}_{m-1}\left(M(A-\rho_0 B)\right)M+\mathcal{O}(\sin\theta_0)\right)r_2 +\\ \nonumber
&& \qquad \qquad \qquad \qquad \qquad \quad r_1^T\left(p^{(1)}_{m-1}\left(M(A-\rho_1 B)\right)M+\mathcal{O}(\sin\theta_1)\right)r_2\Big\}.
\end{eqnarray}
}Since $M$, $A$ and $B$ are all real symmetric, so is $p^{(k)}_{m-1}\left(M(A-\rho_k B)\right)M$. Therefore, $r_k^Tp^{(k)}_{m-1}\left(M(A-\rho_k B)\right)M = g_k^T$, and hence
{\small
\begin{eqnarray}\nonumber
&&\quad p_0^T(A-\rho(x_0)B)g_2  \\ \nonumber
&=& x_1^TBg_2\mathcal{O}(\sin\theta_0) + \frac{1}{\gamma_{01}}\Big\{-\beta_{01}g_0^T r_2+r_0^T r_2\mathcal{O}(\sin\theta_0)+g_1^T r_2 +r_1^T r_2 \mathcal{O}(\sin\theta_1)\Big\} \\ \nonumber
&=& \|g_2\|\mathcal{O}(\sin\theta_0)+\frac{1}{\mathcal{O}(\sin\theta_0)}\big\{r_0^T r_2\mathcal{O}(\sin\theta_0)+r_1^T r_2\mathcal{O}(\sin\theta_1)\big\} \\ \nonumber
&=& \|g_2\|\mathcal{O}(\sin\theta_0)+\|r_2\|\mathcal{O}(\sin\theta_0), \qquad \mbox{(note that $\|r_1\|=\mathcal{O}(\sin\theta_1)=\mathcal{O}(\sin\theta_0)$)}
\end{eqnarray}
}where we use $r_2 \perp g_0$ and $r_2 \perp g_1$ from \eqref{r2r1r0}. This shows that $p_0^T(A-\rho(x_0)B)g_2 \leq \mathcal{O}(\sin\theta_0)$. Also, since $p_0^T(A-\rho(x_0)B)g_2$ is proportional to $\|g_2\|$, we  simply have $p_0^T(A-\rho(x_0)B)g_2 = \|g_2\|\mathcal{O}(\sin\theta_0)$. Therefore, by Lemma \ref{apprxconjkm1k},
{\small
\begin{eqnarray}\nonumber
&& p_0^T(A-\rho(x_0)B){p}_2 = p_0^T(A-\rho(x_0)B)\frac{\eta_2}{\|g_2\|}\widetilde{p}_2\\ \nonumber
&=& p_0^T(A-\rho(x_0)B)\frac{\eta_2}{\|g_2\|}\left\{g_2 - \frac{p_1^T(A-\rho(x_1)B)g_2}{p_1^T(A-\rho(x_1)B)p_1}p_1 \right\} \\ \nonumber
&=& \frac{\eta_2}{\|g_2\|} \left\{p_0^T(A-\rho(x_0)B)g_2 - \frac{p_1^T(A-\rho(x_1)B)g_2}{p_1^T(A-\rho(x_1)B)p_1}\left[p_0^T(A-\rho(x_0)B)p_1\right]\right\} \\ \nonumber
&=& \frac{\eta_2}{\|g_2\|}\left(\|g_2\|\mathcal{O}(\sin\theta_0)+\|g_2\|\mathcal{O}(\sin^2\theta_0)\right)=\mathcal{O}(\sin\theta_0).
\end{eqnarray}
}
Similarly, using a slightly different shift in the above relation, we have
\begin{eqnarray}\label{p2p0}
&&p_2^T(A-\rho(x_2)B)p_0 = p_0^T(A-\rho(x_0)B)p_2+\left(\rho(x_0)-\rho(x_2)\right)p_0^TBp_2 \\ \nonumber
&=& \mathcal{O}(\sin\theta_0)+\left(\left(\rho(x_0)-\lambda_1\right)-\left(\rho(x_2)-\lambda_1\right)\right)( p_0, p_2 )_B \\ \nonumber
&=& \mathcal{O}(\sin\theta_0) + \left(\mathcal{O}(\sin^2\theta_0)-\mathcal{O}(\sin^2\theta_2)\right)( p_0, p_2 )_B=\mathcal{O}(\sin\theta_0).\qed
\end{eqnarray}
\end{proof}\medskip

{\bf Proof of Lemma \ref{qopteqvorth}}
\begin{proof}
The proof is done by contradiction. Assume that $\lim_{\theta_0 \rightarrow 0} \cos\angle(r_k, W_k)$ exists but is not zero. Then there is a vector $u \in U_k$ such that $\lim_{\theta_0 \rightarrow 0}\cos \angle(r_k,u) = \delta < 0$, i.e., $u$ is a descent direction at $x_k$. We have shown in Lemma \ref{rhodecrease} that $$\rho(x_k+\alpha^* u) - \rho(x_k) \leq - \mathcal{O}(\sin^2\theta_k)\mathcal{O}\left(\cos^2\angle(u,r_k)\right) = -\mathcal{O}(\sin^2\theta_k).$$ Since $y^*$ is the global minimizer in $U_k$ and $x_k+\alpha^* u \in U_k$, $\rho(y^*) \leq \rho(x_k+\alpha^* u) \leq \rho(x_k)$. As a result, we have $$\lim_{\theta_0 \rightarrow 0} \frac{\rho(x_k)-\rho(y^*)}{\rho(x_k)-\lambda_1} \geq \lim_{\theta_0 \rightarrow 0} \frac{\rho(x_k)-\rho(x_k+\alpha^* u)}{\rho(x_k)-\lambda_1} \geq \lim_{\theta_0 \rightarrow 0}\frac{\mathcal{O}(\sin^2\theta_k)}{\mathcal{O}(\sin^2\theta_k)} = \mathcal{O}(1),$$ meaning that $x_k$ does not satisfy the global quasi-optimality condition. \qed
\end{proof}

\end{document}